\newcommand{\ubar}[1]{\underaccent{\bar}{#1}}
\newcommand{\R}{\mathbb{R}}
\newcommand{\N}{\mathbb{N}}
\newcommand{\Ls}{(-\Delta)_\Omega^s}
\newcommand{\OO}{\mathcal O}
\theoremstyle{plain}
\newtheorem{defi}{Definition}[section]
\newtheorem{prop}[defi]{Proposition}
\newtheorem{teo}[defi]{Theorem}
\newtheorem{lema}[defi]{Lemma}
\newtheorem{remark}[defi]{Remark}
\theoremstyle{definition}
\theoremstyle{remark}
\numberwithin{equation}{section}
\begin{document}


\title[]{Ergodic pairs for fractional Hamilton-Jacobi equations on bounded domains: large solutions.}

%
\author[]{Alexander Quaas}
\address{
Alexander Quaas: Departamento de Matem\'atica, Universidad T\'ecnica Federico Santa Mar\'ia \\
Casilla: v-110, Avda. Espa\~na 1680, Valpara\'iso, Chile}
\email{alexander.quaas@usm.cl}

\author[]{Erwin Topp}
\address{
Erwin Topp:
Instituto de Matem\'aticas, Universidade Federal do Rio de Janeiro, Rio de Janeiro - RJ, 21941-909, Brazil. {\tt etopp@im.ufrj.br}
}

\keywords{Nonlocal operator, Hamilton-Jacobi Equations, Dirichlet Problem, Large Solutions, Viscosity Solutions}

\subjclass[2020]{35F21, 35R11, 35B44, 35B40, 35D40}

\date{\today}

\begin{abstract}
In this article, we study the ergodic problem associated to viscous Hamilton-Jacobi equation where the diffusion is governed by the censored fractional Laplacian, a nonlocal elliptic operator restricted to a bounded domain $\Omega \subset \R^N$. We restrict ourselves to the case in which the nonlinear gradient term has a scaling less or equal than the fractional order of the diffusion. In similarity to its second-order counterpart, we provide existence of ergodic pairs involving solutions that blow-up on $\partial \Omega$. We use the celebrated vanishing discount method, where the analysis of the approximated solutions have its own interest, leading to qualitative properties for the ergodic problem such as precise blow-up rates for the solution and characterization of the ergodic constant. The main difficulties arise from the state-dependency of the operator, from which the arguments of the local case based on well-known invariance properties of the Laplacian are not longer at disposal.
\end{abstract}

\maketitle

\section{Introduction.}

Let $\Omega \subset \R^N$ be bounded domain with $C^2$ boundary. For $s \in (0, 1)$, we denote $\Ls$ the \textsl{censored (or regional) fractional Laplacian of order $2s$}. It is given by the formula
$$
\Ls u(x) = -C_{N,s}  \mathrm{P.V.} \int_{\Omega} \frac{u(z) - u(x)}{| x - z|^{N + 2s}}dz,
$$
whenever the integral makes sense for $u: \Omega \to \R$ measurable and $x \in \Omega$. The integral is understood in the Cauchy Principal Value sense, and $C_{N, s} > 0$ is a well-known normalizing constant, see for instance~\cite{reg1}. 

This is an integro-differential operator that arise in different mathematical contexts. To name a few, we can find it as the infinitesimal generator of stochastic stable-like processes whose trajectories are confined to $\Omega$~(\cite{BBC}). From the point of view of the Calculus of Variations, it arises in the Euler-Lagrange formulation for minimizers of the Sobolev energy in $H^s(\Omega)$~(\cite{reg1}). In a more PDE setting, it is a  compatible fractional operator to deal with Neumann-type problems, see for instance~\cite{bcgj}.


We are going to consider it in the context of Hamilton-Jacobi  equations (\textsl{HJ equations} for short). Our setting assumes that $s \in (1/2, 1)$, $s + 1/2 < m \leq 2s$, and $f \in C(\Omega)$ is bounded from below. It is our interest the analysis on the existence and qualitative properties of a \textsl{pair} $(u, c) \in C(\Omega) \times \R$ solving the \textsl{ergodic problem}
\begin{equation}\label{eq-erg-frac}\tag{${\bf E}$}
\Ls u + |Du|^m = f - c \quad \mbox{in} \ \Omega,
\end{equation}
subject to the boundary blow-up condition, 
\begin{equation}\label{blow-up}\tag{${\bf BU}$}
u(x) \nearrow +\infty \quad \mbox{as} \ x \to \partial \Omega,
\end{equation}
adopting the notion of viscosity solution to address this problem.

Notice that both~\eqref{eq-erg-frac} and~\eqref{blow-up} remain the same if we add a constant to $u$, from which the pair $(u, c)$ may be cast as the solution to an \textsl{additive eigenvalue problem}. 
This type of eigenvalue problem has been the subject of interest of the (degenerate) elliptic PDE community since the work by Lions, Papanicoloau and Varadhan in~\cite{LPV} for first-order problems in periodic ambient space, and subsequently extended to second-order and nonlocal setting by various authors since then.

To treat~\eqref{eq-erg-frac}-\eqref{blow-up},
we follow the approach by Lasry and Lions~\cite{LL} for viscous HJ equations posed on bounded domains. The analysis starts with the \textsl{discounted problem}, which considers a parameter $\lambda > 0$, and address the solvability of the equation
\begin{equation}\label{eqLL}
\lambda u - \Delta u + |Du|^m = f \quad \mbox{in} \ \Omega,
\end{equation}
in the sub-quadratic case $1 < m \leq 2$. Here, $\lambda > 0$ is usually referred to as the \textsl{discount factor}, and its role into~\eqref{eqLL} makes it a \textsl{proper equation}, a general condition that allows the use of viscosity comparison principles in degenerate elliptic equations. In fact, by means of such comparison properties together with the method of sub and supersolutions, it is proven in~\cite{LL} the existence of a unique solution $u = u_\lambda$ to~\eqref{eqLL}-\eqref{blow-up} for each $\lambda > 0$. By its boundary behavior~\eqref{blow-up}, we say that $u$ is a \textsl{large solution} for the equation under study.

The method yields a precise blow-up profile of these large solutions in terms of the distance to the boundary function $d(x) := \mathrm{dist}(x, \partial \Omega)$.
From here, it is possible to address the ergodic problem through the so-called \textsl{vanishing discount problem}: after a suitable normalization of $u_\lambda$ and thanks to interior elliptic estimates, we can extract a converging subsequence of the family $\{ u_\lambda \}_\lambda$ as $\lambda \searrow 0$, concluding the existence of a pair $(u,c)$ solving the equation
\begin{equation}\label{eqLL-erg}
- \Delta u + |Du|^m  = f - c \quad \mbox{in} \ \Omega,
\end{equation}
with $u$ satisfying~\eqref{blow-up}. Moreover, the ergodic constant $c$ is unique, and the solution $u$ is unique up to an additive constant, consequence of standard elliptic tools such as interior regularity estimates and Strong Maximum/Comparison Principles. This type of eigenvalue problems is employed into the study the large time behavior of parabolic HJ equations~\cite{B3}. The ergodic pair has an interpretation in terms of an stochactic optimal control with state-constraint, see~\cite{FGMP13, LL}. Finally, 
it is worth to mention that the superquadratic case $m > 2$ is also studied in~\cite{LL}, and in this case solutions for the ergodic problem remain bounded.




\smallskip

In the nonlocal setting, the authors of this manuscript in collaboration with G. D\'avila~\cite{DQTe} studied the fractional HJ equation
\begin{equation}\label{eq-frac0}
\lambda u + (- \Delta)^s u + |Du|^m = f  \quad \mbox{in} \ \Omega,
\end{equation}
where $(-\Delta)^s$ denotes the (full) fractional Laplacian of order $2s$, given by 
$$
(-\Delta)^s u(x) = -C_{N,s} \int_{\R^N} \frac{u(z) - u(x)}{| x - z|^{N + 2s}}dz.
$$

Notice that for this operator the integration takes into account the information of $u$ in the whole of $\R^N$, from which~\eqref{eq-frac0} must be complemented with an \textsl{exterior condition} in $\Omega^c$. For instance, given $\varphi: \Omega^c \to \R$ adequate, we can impose
\begin{equation}\label{exterior}
u = \varphi \quad \mbox{in} \ \Omega^c,
\end{equation}
in addition to~\eqref{blow-up}. There are multiple solutions to~\eqref{eq-frac0}-\eqref{exterior} satisfying~\eqref{blow-up}, consequence of the integration on $\Omega^c$ which implies nontrivial blow-up traces on $\partial \Omega$, see~\cite{grubb1, Ab, CFQ}.

Moreover, no ergodic behavior is observed in this type of \textsl{non censored} problems. A formal way to see this is in the case~\eqref{exterior} holds, can be explained by the identity
\begin{align*}
(-\Delta)^s u(x) = \Ls u(x) + \lambda_s(x) u(x), \quad \mbox{with} \quad \lambda_s(x) := C_{N,s}\int_{\Omega^c} |x - y|^{-(N + 2s)}dy,
\end{align*}
valid for all $x \in \Omega$, from which~\eqref{eq-frac0} can be formally written as
\begin{align*}
(\lambda + \lambda_s) u + \Ls u + |Du|^m = f + \tilde \varphi \quad \mbox{in} \ \Omega, \quad \mbox{with} \quad \tilde \varphi(x) := C_{N,s}\int_{\Omega^c} \frac{\varphi(y)}{|x - y|^{N + 2s}}dy,
\end{align*}
which is a proper equation even if $\lambda = 0$.

From the above discussion, the censored fractional Laplacian is a natural operator to address fractional ergodic problems on bounded domains. As we mentioned above, we start dealing with the discounted problem: for $\lambda > 0$, we consider
\begin{equation}\label{eq}\tag{${\bf P_\lambda}$}
\lambda u + \Ls u + |Du|^m  = f \quad \mbox{in} \ \Omega.
\end{equation}


%

%
%
%

We introduce some notation that will be used in the rest of the paper. For $1/2 + s < m \leq 2s$, we define $\gamma \in [0, 1)$ as
\begin{equation}\label{defgamma}
\gamma = \frac{2s - m}{m - 1},
\end{equation}
and denote
$$
g_\gamma(x) = \left \{ \begin{array}{cl} d(x)^{-\gamma} \quad & \mbox{if} \ \gamma > 0, \\
-\log(d(x)) \quad & \mbox{if} \ \gamma = 0, \end{array} \right .
$$

We say that a continuous function $v: \Omega \to \R$ is in the \textsl{$\gamma$-class} if
\begin{equation}\label{gamma-class}
-\infty < \liminf \limits_{d(x) \to 0^+} \frac{v(x)}{g_\gamma(x)} \leq \limsup \limits_{d(x) \to 0^+} \frac{v(x)}{g_\gamma(x)} < +\infty.
\end{equation}

Now we present our first main result in long extent.


\begin{teo}\label{teo1}
Let $s \in (1/2, 1)$ and $s + 1/2 < m \leq 2s$. Then, for each $\lambda > 0$ and $f \in C(\Omega)$ bounded from below such that
\begin{equation}\label{compor-f}
\lim_{d(x) \to 0^+} d(x)^{\gamma + 2s} f(x) = C_1.
\end{equation}
there exists a viscosity solution $u \in C^{1, \alpha}(\Omega)$ to problem~\eqref{eq}-\eqref{blow-up}. It is the minimal large solution to~\eqref{eq} in the sense that for every $v \in C(\Omega)$ viscosity solution to~\eqref{eq}-\eqref{blow-up}, then $u \leq v$ in $\Omega$. It is also the unique solution in the $\gamma$-class~\eqref{gamma-class}. Moreover, there exists $C_0 > 0$ depending on $s, N, m$ and $C_1$ such that 
\begin{align}\label{rate-frac}
 \lim \limits_{d(x) \to 0^+} \frac{u(x)}{g_\gamma(x)} = C_0.
\end{align}


Finally, there exists $C_* > 0$ depending on $\Omega, N, s, m$ and $C_1$ such that
\begin{align}\label{rate-frac1}
|D u(x)| \leq C_* \left \{ \begin{array}{ll} d(x)^{-\gamma - 1} \quad & \mbox{if} \ 1/2 + s < m < 2s, \\
d(x)^{- 1} \quad & \mbox{if} \ m = 2s, \end{array} \right .
\end{align}
for all $x \in \Omega$ close to $\partial \Omega$.
\end{teo}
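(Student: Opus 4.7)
The plan follows the Lasry-Lions strategy~\cite{LL} adapted to the state-dependent nonlocal setting, exploiting that $\lambda > 0$ makes~\eqref{eq} a proper equation so that viscosity comparison is available on smooth subdomains. The work splits into: (i) building sub- and supersolutions with the correct boundary rate, (ii) producing a solution via Perron's method or monotone approximation, (iii) pinning down the precise blow-up rate, (iv) deducing uniqueness in the $\gamma$-class and minimality, and (v) obtaining the gradient bound.

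The key computation drives everything. The definition of $\gamma$ gives the scaling identity $m(\gamma + 1) = \gamma + 2s$, so that both $|D g_\gamma|^m$ and $\Ls g_\gamma$ scale like $d(x)^{-\gamma - 2s}$ near $\partial\Omega$, matching the rate of $f$ in~\eqref{compor-f}. Using that $\partial\Omega$ is $C^2$ and $d$ is smooth in a one-sided neighborhood of $\partial\Omega$, I would expand $\Ls(a g_\gamma)$ by splitting the integral into a local piece, where the computation reduces after flattening to a one-dimensional model in a half-space, and a far piece giving only lower-order contributions. This yields $\Ls(a g_\gamma)(x) = A_\gamma\, a\, d(x)^{-\gamma - 2s} + o(d^{-\gamma - 2s})$ for an explicit constant $A_\gamma$, and combined with $|D(a g_\gamma)|^m = B_\gamma\, a^m\, d^{-\gamma - 2s} + o(d^{-\gamma - 2s})$ shows that $a g_\gamma$ is a local sub- (resp.\ super-) solution exactly when $A_\gamma\, a + B_\gamma\, a^m < C_1$ (resp.\ $>C_1$). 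The unique positive root $C_0$ of $A_\gamma\, a + B_\gamma\, a^m = C_1$ is the candidate for the constant in~\eqref{rate-frac}.

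With these building blocks, $u_\lambda$ is produced as the maximal subsolution lying below an explicit supersolution via Perron's method; interior $C^{1,\alpha}$ regularity comes from the available viscosity regularity theory for fractional Hamilton-Jacobi equations with superlinear gradient terms, using that $\Ls$ is uniformly elliptic on any compact subset of $\Omega$. The precise rate~\eqref{rate-frac} then follows by a squeeze argument: for every $\varepsilon > 0$, suitable constant shifts of $(C_0 \pm \varepsilon) g_\gamma$ are sub- and supersolutions in a neighborhood of $\partial \Omega$, and interior viscosity comparison on $\Omega_n := \{d > 1/n\}$ sandwiches $u_\lambda$ between them. Letting $n \to \infty$ and then $\varepsilon \to 0$ gives~\eqref{rate-frac}.

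The main obstacle I anticipate is uniqueness in the $\gamma$-class and the minimality assertion, because the translation-based comparison arguments from the local case are unavailable: $\Ls$ does not commute with translations. My strategy would be to take $u,v$ both in the $\gamma$-class and compare the perturbation $w_\eta := (1 - \eta) u$ against $v$ for small $\eta \in (0,1)$, exploiting the homogeneity of $|Du|^m$ together with the lower-order term $\lambda u$ to make $w_\eta$ a strict subsolution of a modified equation whose discrepancy with that of $v$ can be absorbed. Since both solutions behave like $C_0 g_\gamma$ at the boundary, $w_\eta - v \to -\infty$ at $\partial\Omega$, so the supremum of $w_\eta - v$ is attained inside $\Omega$ and is ruled out by viscosity comparison; letting $\eta \to 0$ gives $u \leq v$, and symmetry yields equality. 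Minimality follows because any large solution $v$ dominates $(C_0 - \varepsilon) g_\gamma - K$ near $\partial \Omega$ by comparison against the explicit subsolution, hence lies in the $\gamma$-class. Finally, the gradient bound~\eqref{rate-frac1} follows by rescaling: at a point $x_0$ with $d(x_0) = \delta$ small, the function $U(y) := \delta^{\gamma} u_\lambda(x_0 + \delta y)$ solves a rescaled equation on $B_{1/2}$ with data bounded independently of $\delta$ thanks to~\eqref{rate-frac}, and the interior $C^{1,\alpha}$ estimate gives $|DU(0)| \leq C$, which unscales to $|Du_\lambda(x_0)| \leq C\, \delta^{-\gamma - 1}$, or $C\, \delta^{-1}$ in the critical case $m = 2s$, where one first subtracts $u_\lambda(x_0)$ to handle the logarithmic profile.
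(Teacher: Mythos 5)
Your identification of the scaling identity $m(\gamma+1) = \gamma + 2s$, the barrier expansion $\Ls(a g_\gamma) \sim (A_\gamma a + B_\gamma a^m) d^{-\gamma-2s}$ with $A_\gamma < 0$, and the resulting equation for $C_0$ all match Lemma~\ref{lemabarrera0} and~\eqref{defc0} in the paper. The rescaling argument for the gradient bound also matches Proposition~\ref{lemaequiLip}. However, the core of the plan --- the squeeze argument and the uniqueness argument --- has a genuine gap that the paper takes pains to route around.

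The difficulty is exactly the ``state-dependency'' warned about in the introduction. Your plan to obtain~\eqref{rate-frac} by ``interior viscosity comparison on $\Omega_n := \{d > 1/n\}$'' between $u_\lambda$ and the barriers $(C_0\pm\varepsilon)g_\gamma \pm K$ is the Lasry--Lions argument, but it does not transfer to the censored operator $\Ls$. For a nonlocal operator on $\Omega$, comparison on a subdomain $\Omega_n$ requires ordering of the competitors on the whole complement $\Omega\setminus\Omega_n$ (not merely on $\partial\Omega_n$): otherwise a contact point $\bar x \in \Omega_n$ of $u_\lambda - W$ need not be a global max of $u_\lambda - W$ over $\Omega$, and the crucial sign $\Ls(u_\lambda - W)(\bar x)\geq 0$ is unavailable. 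Since $\Omega\setminus\Omega_n$ is precisely the blow-up region, imposing the ordering there is exactly what you are trying to prove, so the argument is circular. The same circularity sinks the uniqueness argument: to get $(1-\eta)u - v \to -\infty$ at $\partial\Omega$ for arbitrarily small $\eta$ you already need both solutions to satisfy the precise rate~\eqref{rate-frac} with the \emph{same} constant $C_0$, not merely to be in the $\gamma$-class (which only pins down $u/g_\gamma$ between two possibly different constants $c_1 < c_2$). Finally, your claim that every large solution $v$ ``hence lies in the $\gamma$-class'' only yields the lower bound $\liminf v/g_\gamma>0$ via the subsolution barrier; the upper bound in~\eqref{gamma-class} is not obtained and is in fact not expected to hold for an arbitrary large solution, which is why the paper only claims uniqueness \emph{within} the $\gamma$-class and proves minimality separately.

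The paper's workaround is the content of Lemmas~\ref{perron-alternativo}--\ref{lema-Veron}: a solution $v_\varepsilon$ with the $\varepsilon$-precise rate is built directly by a \emph{global} Perron argument between $(C_0\pm\varepsilon)g_\gamma \pm K$, which sidesteps subdomain comparison entirely. To identify $v_\varepsilon$ with the minimal solution $u$, Lemma~\ref{lema-Veron} does not compare $u$ and $v_\varepsilon$; instead it uses the convexity of $t\mapsto |t|^m$ to show that $w := (1+\alpha)u - \alpha v_\varepsilon$ is a supersolution, runs Perron again between $\theta u$ and $w$ to produce a solution $z\leq w$, and invokes minimality of $u$: if $v_\varepsilon > u$ somewhere then $z < u$ somewhere, a contradiction. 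That minimality in turn is obtained not from the $\gamma$-class but from the monotone Dirichlet approximation $u_R\nearrow u$ of Proposition~\ref{propexistence} (any large solution $v$ is a supersolution of the truncated Dirichlet problem, so $u_R\le v$ for all $R$). Note this also means your construction of $u_\lambda$ as the ``maximal subsolution below a supersolution'' would produce the wrong object: it is the minimal solution, arising as an increasing limit of bounded solutions, that makes Lemma~\ref{lema-Veron} work. In short, the missing idea is to replace direct comparison by the convexity/Perron/minimality combination.
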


The constant $C_0$ in~\eqref{rate-frac} is given by the unique positive root of the equations
\begin{equation}\label{defc0}
\left \{ \begin{array}{rll} -C_0 c_\gamma + \gamma^m C_0^{m}-C_1 & = 0 & \quad \mbox{if $m < 2s$}, \\
-C_0 c +  C_0^{2s}-C_1 & = 0 & \quad \mbox{if $m = 2s$}. \end{array} \right .
\end{equation}
where $c$ is the constant in Lemma~\ref{lemabarrera0} (which depends on $\gamma, N$ and $s$). This \textsl{main order} blow-up profile resembles the one of~\cite{LL}.




Some comments of the result are in order. The main contrast in relation to the local case is the partial uniqueness we presented here, only restricted to the $\gamma$-class. It is tempting to look at the multiplicity result for the ``full Laplacian problem", but $\Ls$ lacks exactly on the device promoting multiple solutions for~\eqref{eq-frac0}. See also Chen and Hajaiej~\cite{CH} where the case of semilinear censored fractional equations is addressed. The difficulty comes from the fact that operator $\Ls$ is not translation invariant. At this respect, it is often to see arguments based on the analysis on subdomains $\Omega' \subset \Omega$, which leads to the construction of \textsl{maximal solutions} for the problem on $\Omega$, concluding estimates from above for \textsl{every large solution of the equation}. However, $\Ls$ and $(-\Delta)_{\Omega'}^s$ are practically unrelated, even more if we deal with large solutions. By the structure of the operator and the method used here, the constraint $s + 1/2 < m$ seems to be necessary to have $g_\gamma \in L^1(\Omega)$ and build up suitable barriers.


\smallskip

Our second main result states the solvability of the ergodic problem.
\begin{teo}\label{teo2}
Let $s \in (1/2, 1)$, $s + 1/2 < m \leq 2s$, and $f \in C(\Omega)$ bounded below and locally H\"older continuous in $\Omega$ such that \eqref{compor-f} holds

Then, there exists $(u, c) \in C^{2s + \alpha}(\Omega) \times \R$ solving~\eqref{eq-erg-frac}-\eqref{blow-up}.

The constant $c$ meets the characterization
$$
c = \inf \{ \rho: \mbox{$\exists \ v \in C(\Omega)$ satisfying~\eqref{blow-up} and  $\Ls v + |Dv|^m \geq f-\rho$ in $\Omega$} \},
$$
and the solution $u$ satisfies the asymptotic boundary profiles~\eqref{rate-frac}-\eqref{rate-frac1}.

\smallskip

We also have:
\begin{itemize}
\item {\bf Minimality:} If $(v, c') \in C(\Omega) \times \R$ is any eigenpair solving~\eqref{eq-erg-frac}-\eqref{blow-up}, then there exists $M > 0$ such that $u - M \leq v$ in $\Omega$.

\smallskip

\item {\bf Partial uniqueness:} If $(v,c) \in C(\Omega) \times \R$ solve~\eqref{eq-erg-frac}-\eqref{blow-up}, then $v$ is in the $\gamma$-class~\eqref{gamma-class} and $v=u+C$ for some $C \in \R$. 

If $(v,c') \in C(\Omega) \times \R$ solve~\eqref{eq-erg-frac}-\eqref{blow-up} with $v$ in the $\gamma$-class~\eqref{gamma-class}, then $c' = c$ and $v=u+C$ for some $C \in \R$. 
\end{itemize}
\end{teo}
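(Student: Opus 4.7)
The plan is to implement the vanishing discount method using the family $\{u_\lambda\}_{\lambda>0}$ of minimal large solutions to the discounted problem~\eqref{eq}--\eqref{blow-up} provided by Theorem~\ref{teo1}. Fix an interior reference point $x_0 \in \Omega$, set $v_\lambda := u_\lambda - u_\lambda(x_0)$, and note that $v_\lambda$ solves
\begin{equation*}
\lambda v_\lambda + \Ls v_\lambda + |Dv_\lambda|^m = f - \lambda u_\lambda(x_0) \quad \mbox{in } \Omega,
\end{equation*}
with $v_\lambda \nearrow +\infty$ on $\partial \Omega$. The goal is to establish local pre-compactness of $\{v_\lambda\}$ together with boundedness of $\{\lambda u_\lambda(x_0)\}_\lambda$, and then extract the ergodic pair along a subsequence $\lambda_n \to 0^+$.

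\textbf{Compactness.} The constants $C_0, C_*$ in~\eqref{rate-frac}--\eqref{rate-frac1} are determined by~\eqref{defc0} and depend only on $s, N, m, C_1$ and $\Omega$, not on $\lambda$. Inspecting the barriers underlying Theorem~\ref{teo1} shows, moreover, that the asymptotic $u_\lambda \sim C_0 g_\gamma$ holds uniformly in $\lambda \in (0,1]$ on a fixed boundary tube $\{0 < d(x) < \delta_0\}$. Combined with~\eqref{rate-frac1} and standard interior $C^{1,\alpha}$-estimates for viscosity solutions (uniform for $\lambda$ in a compact range), this controls $\osc_K u_\lambda$ uniformly for each compact $K \subset \Omega$, so that $\{v_\lambda\}$ is locally uniformly bounded and equicontinuous. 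To bound $\lambda u_\lambda(x_0)$ from above, given any $\rho \in \R$ admitting a continuous blow-up function $w$ with $\Ls w + |Dw|^m \geq f - \rho$ in $\Omega$, the shift $w + K + \rho/\lambda$ (with $K$ large enough that $w + K \geq 0$) is a supersolution of~\eqref{eq}, so the minimality of $u_\lambda$ in Theorem~\ref{teo1} gives $u_\lambda \leq w + \rho/\lambda + K$; evaluating at $x_0$ produces $\limsup_\lambda \lambda u_\lambda(x_0) \leq \rho$. A matching lower bound is obtained by testing against an explicit subsolution with leading coefficient strictly smaller than $C_0$.

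\textbf{Passage to the limit and consequences.} Arzel\`{a}--Ascoli now furnishes a subsequence along which $v_{\lambda_n} \to u$ locally uniformly and $\lambda_n u_{\lambda_n}(x_0) \to c$ for some $c \in \R$; viscosity stability yields $\Ls u + |Du|^m = f - c$, the uniform boundary tube transfers~\eqref{blow-up} and the profiles~\eqref{rate-frac}--\eqref{rate-frac1} to $u$, and interior regularity upgrades $u$ to $C^{2s+\alpha}(\Omega)$. The variational characterization of $c$ is immediate from the compactness step: $c \leq \rho$ for every admissible $\rho$, while $u$ itself witnesses that the infimum is attained. Minimality is obtained by running the same shift argument with an arbitrary ergodic pair $(v, c')$ in place of $(w, \rho)$: this first forces $c \leq c'$ upon evaluation at $x_0$, and then a careful combination with the matching boundary profiles of $u$ and $v$ (derived from barriers adapted from Theorem~\ref{teo1} applied directly to the ergodic equation) produces a finite $M$ with $u - M \leq v$. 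Partial uniqueness follows along the same lines: the barrier arguments force any ergodic solution $v$ sharing the constant $c$ into the $\gamma$-class with leading coefficient $C_0$, so $u - v$ is bounded in $\Omega$ and satisfies a linear nonlocal equation for which a strong maximum principle yields $u - v \equiv$ const; the $\gamma$-class statement is handled analogously by pinning down $c' = c$ via the matching boundary asymptotics.

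\textbf{Main obstacle.} The hardest ingredient is closing the vanishing discount loop for an operator that is not translation invariant: the uniform-in-$\lambda$ boundary barriers of Theorem~\ref{teo1} must survive the normalization $v_\lambda = u_\lambda - u_\lambda(x_0)$, and the comparison and maximum-principle arguments underlying the characterization of $c$, the minimality, and the partial uniqueness must be anchored on these barrier constructions rather than on the shift-invariance available in the local case.
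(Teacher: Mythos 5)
Your overall vanishing-discount strategy is the right one, and the sub-step deducing $\limsup_\lambda \lambda u_\lambda(x_0)\le\rho$ from the shift $w+K+\rho/\lambda$ plus minimality in Theorem~\ref{teo1} is sound and is essentially what the paper does for the characterization of $c$. However, two of your key steps have real gaps.

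First, the compactness claim. You assert that "inspecting the barriers" in Theorem~\ref{teo1} shows $u_\lambda\sim C_0 g_\gamma$ uniformly in $\lambda\in(0,1]$ on a fixed boundary tube, and that this controls $\osc_K u_\lambda$. But the barriers of Lemma~\ref{perron}, which are the ones that pin down the $C_0$-asymptotics, are $W_\epsilon^\pm=(C_0\pm\epsilon)g_\gamma\pm\lambda^{-1}C_\epsilon$: they carry a $\lambda^{-1}$ additive term, so the resulting bound on $w_\lambda=u_\lambda-u_\lambda(x_0)$ has a $2\lambda^{-1}C_\epsilon$ gap between upper and lower envelope which diverges as $\lambda\to0$. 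The paper closes this by a separate renormalization/contradiction argument (Lemma~\ref{boundw}): one first shows the max of $u_\lambda-(C_0+\epsilon)g_\gamma$ is attained in a fixed compact $\bar\Omega_\delta$, reducing everything to bounding $t_\lambda:=\max_{\bar\Omega_\delta}|w_\lambda|$; then, assuming $t_{\lambda_n}\to\infty$, the normalized $v_n=w_{\lambda_n}/t_n$ solves an equation with gradient coefficient $t_n^{m-1}\to\infty$, and the key ingredient is the $C^{0,1/2}$ interior equicontinuity estimate of Lemma~\ref{lemaLipbounded}\,(2) which is deliberately uniform in that coefficient $T$. The limit $v$ then solves $|Dv|=0$ and must be constant, contradicting the normalization. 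None of this is implied by "inspecting the barriers," and your proposal is missing both this argument and the $T$-independent regularity estimate that makes it work.

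Second, the minimality argument. You propose to use "matching boundary profiles of $u$ and $v$" to get $u-M\le v$, but this presupposes that an arbitrary ergodic pair $(v,c')$ has the same leading-order blow-up as $u$, which is exactly what you do not know: $v$ need not be in the $\gamma$-class, and the theorem carefully avoids assuming it. The paper's Proposition~\ref{propuniqueness} does not use boundary profiles at all. Instead it uses a quantitatively nonlocal maximum-principle trick: at the interior minimum point $\bar x_k$ of $(\tilde w_k-C)-v$, the censored-nonlocal term over a fixed compact $K$ at definite distance from $\partial\Omega$ contributes a term $-MC_{N,s}|K|\,\mathrm{dist}(K,\partial\Omega)^{-(N+2s)}$, and $M$ is chosen so this dominates $|c|+|c'|+1$. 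This has no local analogue and must be done at this level of explicitness. Related to this, your claim that $u-v$ "satisfies a linear nonlocal equation" for the partial-uniqueness step is incorrect: the Hamiltonian $|D\cdot|^m$ is nonlinear, $u-v$ solves no PDE, and the comparison has to be carried out in the viscosity sense at contact points; the paper's Lemma~\ref{lemma1} does this via the same nonlocal trick, while Lemma~\ref{lemma2} uses the rescaling $\mu v$ with $\mu\nearrow1$ to pin down $c'=c$ in the $\gamma$-class case. You would need to supply these arguments (or equivalents) before the proof is complete.
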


Let us immediately mention that the assumption $f$ locally H\"older continuous is not necessary, and we adopted it only for simplicity in order to avoid tedious viscosity 
arguments. We believe that this result can be readily extended for merely continuous $f$ by approximation, but we did not pursue in this direction.

As in Theorem~\ref{teo1}, we were unable to provide full uniqueness results for the ergodic pair. The intricate nonlocal structure prevents the application of standard methods to conclude, for example, monotonicity formulas for the ergodic constant $c$ with respect to $\Omega$, a desirable property for further applications. 

\subsection{Literature review and further discussion.} 
Large solutions for second-order ergodic problems on bounded domains have been addressed in more generality in~\cite{BDL19,BL21} where fully nonlinear and quasilinear versions of~\cite{LL} are studied. 

In our case, the elliptic operator has a nonlocal nature. We have chosen the censored fractional Laplacian since it is a basic model that arises in other contexts, such as  weak/variational problems related to $H^s(\Omega)$ energy~\cite{reg1, reg2, reg3, reg4}, and semilinear PDEs addressed through Green's function, see~\cite{CH}. In the viscosity setting, we can mention~\cite{bcgj, IN, G17, RT, BT16}. 

There are plenty of results for ergodic problems in the periodic setting, see for instance~\cite{BCCI14, BKLT15, BLT17} and its applications on ergodic large time behavior for related parabolic problems. Periodicity brings suitable compactness properties for solutions, which combined with rigidity results such as Strong Maximum Principle lead to uniqueness of the ergodic constant and uniqueness up to an additive constant for the solutions. In the non periodic setting, in~\cite{BT} the authors prove the existence of the ergodic pair for HJ equations posed in $\R^N$ with a coercive right-hand side $f$, which is the device bringing compactness properties to solve the problem. However, full uniqueness is still open in the general case. At this respect, different techniques used in the second-order setting such as Hopf-Cole transformation (\cite{B2}), Liouville theorems~\cite{BDL21}, or the use of probabilistic tools (\cite{A}) are not at full disposal for fractional equations, or they are difficult to implement. 

We believe that it is worth to find new tools to treat some of the open questions posed here, specially the ones linked  to uniqueness for the discounted problem and/or ergodic problem. The systematic development of auxiliary tools such as Liouville theorems on the half-space for censored operators, or higher order expansions of the boundary blow-up profile of solutions have their own interest, and seem to be a necessary step for a subsequent study of natural questions, for example, the ergodic large time behavior for parabolic HJ equations with censored fractional diffusion.

Finally, we mention that our methods can be readily  extended to simple extensions of the operator, for instance when the standard kernel of the fractional Laplacian is replaced by a function with the form $K(x - y)$ with $K$ symmetric, continuous in $\R^N \setminus \{ 0 \}$, positive $2s$-homogeneous and satisfying ellipticity conditions à la Caffarelli and Silvestre~\cite{CS09}. Moreover, we believe that our method are applicable to some classes of fully nonlinear version of~\eqref{eq-erg-frac} and~\eqref{eq}, see~\cite{DQThar}, and other censored operators as the ones presented in~\cite{IN, RT}. Other natural extensions such as non-symmetric nonlocal operators and degenerate operators are far more involved, as well as equations with a more general Hamiltonians acting on the gradient term. We did not pursue in this direction.

\smallskip

The paper is organized as follows: in Section~\ref{sectec} we provide basic tools such as barriers and local regularity with its proofs. In Section~\ref{secdiscount} we provide the proof of Theorem~\ref{teo1} and discuss certain non-existence results in the case $s \leq 1/2$. In Section~\ref{secerg}, we prove the existence for the ergodic problem, and provide the details of its qualitative properties in Section~\ref{secqual}, where we complete the proof of Theorem~\ref{teo2}.

\section{Technical lemmas}
\label{sectec}

We start with some notation to be assumed throughout the paper. 
For measurable $A \subseteq \Omega, x \in \Omega$ and $u: \Omega \to \R$ measurable, we denote
$$
(-\Delta)_\Omega^s[A] u(x) = C_{N,s} \mathrm{PV} \int_{\Omega \cap A} \frac{u(x) - u(y)}{|x - y|^{N + 2s}}dy.
$$

Recall that $d(x) = \mathrm{dist}(x, \partial \Omega)$. We use the notation
$$
\Omega_\delta = \{ x \in \Omega : d(x) > \delta\},
$$
for each $\delta > 0$. Since we assume $\partial \Omega$ is smooth, we have the existence of $\delta_1 > 0$ such that the distance function $d$ is smooth in $\Omega \setminus \Omega_{\delta_1}$.

\subsection{Barriers.}
We construct blow-up solutions using Perron's method. In order to construct suitable barriers, we require some lemmas presented in this section. 

We start with the following estimates for the censored fractional Laplacian on functions of the distance $d(x) = \mathrm{dist}(x, \partial \Omega)$. The main novelty here are the estimates for the logarithm, that will be used in the critical case $m = 2s$.

\begin{lema}\label{lemabarrera0}
	Let 
	 $s \in (0, 1)$. There exists $\delta_0 \in (0, \delta_1)$ depending on $N, s$ and $\Omega$ such that

\smallskip
\noindent
$(i)$ for each $\gamma \in (-1, 2s)$ and $s \in (1/2, 1)$, there exists $c = c(N, s, \gamma)  \in \R$ depending on $\gamma, N, s$ such that
\begin{equation*}
-\Ls d^\gamma(x) = d^{\gamma - 2s}(x) (c +  O(d^s(x))) \quad \mbox{for all} \ d(x) < \delta_0,
\end{equation*}
where the $O$ term depends on the data and the smoothness of $\partial \Omega$.

\smallskip

Moreover, for $N, s$ fixed, the map $\gamma \mapsto c(N, s, \gamma)$ is strictly convex and smooth, and we have that $c(0) = c(2s - 1) = 0$, and; therefore $c(\gamma) < 0$ if $\gamma \in (0, 2s - 1)$, and $c(\gamma) > 0$ if $\gamma \in (-1, 0) \cup (2s - 1, 2s)$.

\medskip

\noindent
$(ii)$ there exists $c = c(N, s) \in \R$ depending on $N, s$ such that
$$
-\Ls \log d(x) = d^{-2s} (c + O(d^s(x)|\log d(x)|)) \quad \mbox{for all} \ d(x) < \delta_0,
$$
where the $O$ term depends on the data and the smoothness of $\partial \Omega$. Moreover, $c(1/2) = 0$, $c(s) > 0$ if $s< 1/2$ and $c(s) < 0$ if $s> 1/2.$
\end{lema}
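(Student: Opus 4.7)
The plan is to reduce the computation to the half-space by locally flattening $\partial\Omega$ near the projection $x_0$ of $x$ on the boundary, using a $C^2$ diffeomorphism $\Phi$ with $D\Phi(x_0) = I$ and $d(\Phi^{-1}(y)) = y_N + O(|y|^2)$. Let $\delta := d(x)$. I would split $-\Ls d^\gamma(x)$ into a near part on $\Omega \cap B_\eta(x)$ and a far part on $\Omega \setminus B_\eta(x)$ for a small fixed $\eta$. The far part is uniformly bounded (since $d^\gamma \in L^1(\Omega) \cap L^\infty_{\mathrm{loc}}(\Omega)$ and the kernel is bounded by $\eta^{-N-2s}$), so it gets absorbed in the correction. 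In the near part, rescale $y = \Phi^{-1}(\delta z)$ and extract the $\delta^{\gamma - 2s}$ scaling to arrive at
\begin{equation*}
-\Ls d^\gamma(x) = \delta^{\gamma - 2s}\bigl(c(\gamma) + R(x,\delta)\bigr), \quad c(\gamma) := C_{N,s}\,\mathrm{PV}\!\int_{\R^N_+} \frac{z_N^\gamma - 1}{|e_N - z|^{N+2s}}\,dz.
\end{equation*}

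To analyze $c(\gamma)$ I would integrate out the tangential variables $z' \in \R^{N-1}$, reducing it to the one-dimensional expression
\begin{equation*}
c(\gamma) = \widetilde{C}_{N,s}\,\mathrm{PV}\!\int_0^\infty \frac{t^\gamma - 1}{|t-1|^{1+2s}}\,dt,
\end{equation*}
with $\widetilde{C}_{N,s} > 0$. The value $c(0) = 0$ is immediate, and the substitution $t \mapsto 1/t$ shows $c(2s-1) = -c(2s-1)$, giving the second zero. Strict convexity comes from differentiating twice under the integral:
\begin{equation*}
c''(\gamma) = \widetilde{C}_{N,s}\int_0^\infty \frac{t^\gamma (\log t)^2}{|t-1|^{1+2s}}\,dt > 0,
\end{equation*}
the $(\log t)^2$ taming the singularity at $t = 1$ since $s < 1$. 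Convexity plus the two zeros forces the sign pattern on $(-1,0) \cup (0,2s-1) \cup (2s-1, 2s)$, and $\gamma \mapsto c(\gamma)$ is smooth by dominated convergence.

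For the remainder estimate, the $C^2$ regularity of $\partial\Omega$ gives $|d(\Phi^{-1}(\delta z))/\delta - z_N| = O(\delta(1 + |z|^2))$, and the symmetric difference between $\Phi(\Omega)/\delta$ and $\R^N_+$ satisfies a similar bound. Inserting these expansions into the rescaled integrand, handling the PV singularity at $z = e_N$, and splitting $z$-space into bulk and tail regions yields $|R(x,\delta)| \leq C\delta^s$. For (ii), I would use $\partial_\gamma d^\gamma|_{\gamma = 0} = \log d$, so the leading constant for $-\Ls \log d$ equals $c'(0)$. Strict convexity of $c$ together with the zeros at $0$ and $2s-1$ pins its sign: $c'(0) < 0$ iff $2s - 1 > 0$, i.e.\ $s > 1/2$; $c'(0) > 0$ iff $s < 1/2$; and $c'(0) = 0$ at $s = 1/2$, matching the claim. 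The extra $|\log d|$ factor in the error comes from differentiating the prefactor $\delta^{\gamma - 2s}$ at $\gamma = 0$, which produces a $\log \delta$. The main obstacle is precisely this quantitative error analysis: matching the PV singularity with the $C^2$ perturbation of the boundary and controlling the Jacobian of $\Phi$ through the rescaling so that the remainder lands exactly at the rate $\delta^s$ (resp.\ $\delta^s|\log \delta|$).
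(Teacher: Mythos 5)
Your overall strategy for $(i)$ matches the paper's: reduce to the half-space near the boundary, extract the $\delta^{\gamma-2s}$ scaling, and identify $c(\gamma)$ as a one-dimensional integral after integrating out the tangential variables. The explicit formula $c(\gamma)=\widetilde{C}_{N,s}\,\mathrm{PV}\int_0^\infty (t^\gamma-1)|t-1|^{-1-2s}dt$, the substitution $t\mapsto 1/t$ to exhibit the zero at $\gamma=2s-1$, and the formula for $c''$ are exactly the elementary ingredients that the paper delegates to \cite{CFQ} and \cite{BDGQ}, so for that part you are supplying, cleanly, what the paper cites.

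For $(ii)$ you take a genuinely different route. The paper computes $-(-\Delta)_{\R_+}^s\log x$ directly (via the substitution $t=e^z$ reducing to a $\sinh$ integral), then carries out an independent $N$-dimensional decomposition $I_1+I_2+I_3$ to reach the asymptotics for $\log d$. You instead observe that $\partial_\gamma d^\gamma|_{\gamma=0}=\log d$, identify the leading constant as $c'(0)$, and read off its sign from strict convexity and the two zeros at $0$ and $2s-1$. That is elegant and gives the sign pattern for free. However, as stated it is formal: differentiating the expansion $-\Ls d^\gamma = d^{\gamma-2s}(c(\gamma)+R(\gamma,\cdot))$ in $\gamma$ requires showing that $\partial_\gamma R(\gamma,x)$ exists and is itself $O(d^s(x)|\log d(x)|)$, uniformly near $\gamma=0$; without this the argument yields $c_{\log}=c'(0)$ but not the error rate $O(d^s|\log d|)$ the lemma needs. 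Establishing that bound on $\partial_\gamma R$ is not obviously less work than the paper's direct multidimensional calculation, so the shortcut trades one remainder estimate for another.

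One more caution: the claim that the far-field integral on $\Omega\setminus B_\eta(x)$ is $O(1)$ and hence ``absorbed'' into $d^{\gamma-2s}\,O(d^s)=O(d^{\gamma-s})$ only works when $\gamma\le s$. For $\gamma\in(s,2s)$ the far field contributes a relative error of order $\delta^{2s-\gamma}$, which is larger than $\delta^s$. Since the stated range of $\gamma$ in the lemma extends up to $2s$, and the parameter $\gamma=(2s-m)/(m-1)$ used in the paper need not satisfy $\gamma\le s$ throughout the admissible range of $m$, this step deserves a more careful bookkeeping of where exactly the $\delta^s$ rate comes from; as written it does not go through uniformly in $\gamma$.
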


\begin{proof} The proof of $(i)$ follows by the estimates in Lemma B.1 in [2] where the one dimensional case is studied. Then we can use the argument as ~\cite[Proposition 3.1]{DQTe} to obtain a one dimensional reduction; so estimate follow from here. For the convexity and smoothness of $c$ can be proven as in  \cite[Proposition 3.1]{CFQ}, see also \cite[Lemma 2.2]{BDGQ}.

\medskip

For $(ii)$, we start describing the one dimensional case as in~\cite[Lemma A.1]{bcgj}. For $x > 0$, we have 
\begin{align*}
-(-\Delta)_{\R_+}^s \log(x) = & C_{1,s} x^{-(1 + 2s)} \mathrm{P.V.} \int_{0}^{+\infty} \frac{\log(y/x)}{|1 - y/x|^{1 + 2s}}dy \\
= & C_{1,s} x^{-2s} \mathrm{P.V.} \int_{0}^{+\infty} \frac{\log(t)}{|1 - t|^{1 + 2s}}dt \\
= & C_{1,s} x^{-2s} \mathrm{P.V.} \int_{-\infty}^{+\infty} \frac{z e^{z}}{|1 - e^z|^{1 + 2s}}dz \\
= & 2^{1 + 2s} C_{1,s} x^{-2s} \mathrm{P.V.} \int_{-\infty}^{+\infty} \frac{z e^{(1/2 - s)z}}{|\sinh(z/2)|^{1 + 2s}}dz,
\end{align*}
from which 
$$
-(-\Delta)_{\R_+}^s \log(x) = c_s x^{-2s}.
$$
with
$$
c_s = 2^{1 + 2s} C_{1,s} \mathrm{P.V.} \int_{-\infty}^{+\infty} \frac{z e^{(1/2 - s)z}}{|\sinh(z/2)|^{1 + 2s}}dz.
$$

Notice that $c_s > 0$ for $s < 1/2$, $c_{1/2} = 0$ and $c_s < 0$ for $s > 1/2$. 

\smallskip

Now we deal with the multidimensional case.
Let $\rho = d(x)$ and after rotation and translation, we can assume that $x = \rho e_N$ and the projection of $x$ onto $\partial \Omega$ is the origin.
For $\eta > 0$ small but independent of $\rho$, we consider the cylinder $Q_\eta = B_\eta' \times (-\eta, \eta) \subset \R^N$. Then, we write 
$$
\Ls \xi (x) = \Ls[Q_\eta] \xi(x) + \Ls[Q_\eta^c] \xi(x).
$$ 

If $\rho \leq \eta/4$, we have by the integrability of $x \mapsto \log (d(x))$ in $\Omega$ that
$$
\Ls[Q_\eta^c] \xi(x) = O(|\log \rho |). 
$$

For the integral on $Q_\eta$, we explicitly write
$$
-\Ls[Q_\eta] \xi (x) = \mathrm{PV} \int_{(\Omega - \rho e_N) \cap Q_\eta} \frac{\log(d(x + z)) -  \log \rho}{|z|^{N + 2s}}dz. 
$$

Now, using the above local chart $\psi$ around the origin, we have that
$$
d(x + z) \leq \rho + z_N - \psi(z'), \quad (z', z) \in Q_\eta.
$$

On the other hand, by Lemma 3.1 in~\cite{CFQ}, we have the existence of $C_\Omega > 0$ such that
$$
d(x + z) \geq (\rho + z_N - \psi(z'))(1 - C_\Omega |z'|^2), \quad (z', z) \in Q_\eta.
$$

Thus, denoting $\psi_\rho(y') = \rho^{-1} \psi(\rho y')$ for $y' \in B_{\eta/\rho}$, $\tilde \Omega = (\rho^{-1}\Omega - e_N) \cap Q_{\eta/\rho}$, and defining
$$
I = \int_{\tilde \Omega} \frac{\log(1 + y_N - \psi_\rho(y'))}{|y'|^{N + 2s}}dy,
$$
we have
$$
\rho^{-2s} I \leq -\Ls[Q_\eta] \xi (x) \leq \rho^{-2s} I + \int_{(\Omega - \rho e_N )\cap Q_\eta} \frac{\log(1 - C_\Omega |z'|^2)}{|z|^{N + 2s}}dz,
$$
from which, taking $\eta << 1$ in terms of $C_\Omega$, we may assume that $\log(1 - C_\Omega |z'|^2)$ is bounded when $z \in Q_\eta$, and since $\log(1 - C_\Omega |z'|^2) \leq C |z'|^2$ for some $C > 0$, we arrive at
$$
\rho^{-2s} I \leq \Ls[Q_\eta] \xi (x) \leq \rho^{-2s} I + C,
$$
 for some $C > 0$ not depending on $\rho$. Then, it remains to estimate $I$. For this, we consider the sets
$$
\tilde \Omega_1 = \tilde \Omega \cap \{ y : |y'| \leq 1 \}, \  \tilde \Omega_2 = \tilde \Omega \cap \{ y : 1 \leq |y'| \leq \eta/\sqrt{\rho} \}, \ \tilde \Omega_3 = \tilde \Omega \cap \{ y : \eta/\sqrt{\rho} < |y'|\},
$$ 
and $I_i$ the integral of the same integrand as $I$ on the corresponding sets, from which 
$$
I = I_1 + I_2 + I_3.
$$

For $I_1$, recalling that $|\psi_\rho(y')| \leq C \rho |y'|^2$ for some $C > 0$ just depending on $\Omega$, we have $|\psi_\rho(y')| \leq C \rho$ for all $|y'|\leq 1$, from which we may split the integral as
\begin{align*}
I_1 = & \int_{\tilde \Omega_1} \frac{\log(1 + y_N - \psi_\rho(y'))}{|y|^{N + 2s}}dy \\
= & \int_{B_1'} \int_{\psi_\rho(y') - 1}^{-1/2} \frac{\log(1 + y_N - \psi_\rho(y'))}{|y|^{N + 2s}}dy_N dy' + \int_{B_1'} \int_{1/2}^{\eta/\rho} \frac{\log(1 + y_N - \psi_\rho(y'))}{|y|^{N + 2s}}dy_N dy' \\
& + \int_{B_1'} \int_{-1/2}^{1/2} \frac{\log(1 + y_N - \psi_\rho(y'))}{|y|^{N + 2s}}dy_N dy' \\
= & \int_{B_1'} \int_{- 1}^{-1/2 - \psi_\rho(y')} \frac{\log(1 + t)}{(|t + \psi_\rho(y')|^2 + |y'|^2)^{(N + 2s)/2}}dt dy' \\
& + \int_{B_1'} \int_{1/2 - \psi_\rho(y')}^{\eta/\rho - \psi_\rho(y')} \frac{\log(1 + t)}{(|t + \psi_\rho(y')|^2 + |y'|^2)^{(N + 2s)/2}}dt dy' \\
& + \int_{B_1'} \int_{-1/2}^{1/2} \frac{\log(1 + y_N - \psi_\rho(y'))}{|y|^{N + 2s}}dy_N dy'
\end{align*}

For the first two integrals, we may assume that $\rho$ is small enough in order that $|\psi_\rho(y')| \leq 1/8$. Thus, in both integrals we have $|t| \geq 1/4$, from which
$$
(|t + \psi_\rho(y')|^2 + |y'|^2)^{-(N + 2s)/2} = (|t|^2 + |y'|^2)^{-(N + 2s)/2} (1 + O(\rho)),
$$
where $O(\rho)$ just depends on the smoothness of $\partial \Omega$. Moreover, observe that the function
$$
(y', t) \mapsto \frac{\log(1 + t)}{(|t|^2 + |y'|^2)^{(N + 2s)/2}}
$$
is uniformly bounded for $|t-1/2| \leq C\rho, |t + 1/2| \leq C\rho$ for all $\rho$ universally small (just depending on the smoothness of the boundary).

For the third integral, we use that for $|y_N| \leq 1/2$ we have
$$
\log(1 + y_N - \psi_\rho(y')) = \log(1 + y_N) (1 + O(\rho)),
$$
where $O(\rho)$ just depends on $\Omega$. Hence, we have
$$
I_1 = (1 + O(\rho)) \int_{B_1'} \int_{-1}^{\eta/\rho - \psi_\rho(y')} \frac{\log(1 + t)}{(|t|^2 + |y'|^2)^{(N + 2s)/2}} dt dy' + O(\rho),
$$
and using the integrability at infinity of the kernel, it is possible to conclude
\begin{equation}
I_1 = (1 + O(\rho)) \Big{(} \int_{B_1'}\int_{-1}^{+\infty} \frac{\log(1 + t)}{|(t,y')|^{N + 2s}} dt dy' + O(\rho^{2s} )\Big{)} + O(\rho)
\end{equation}

Now, for $I_2$, we see that in this region $|\psi_\rho(y')| \leq C \rho |y'|^2 \leq C \eta^2$. Thus, for all $\eta$ small enough in terms of the smoothness of the boundary, we have
\begin{align*}
I_2 = & \int_{B_{\eta/\sqrt{\rho}}' \setminus B_1'} \int_{-1}^{\eta/\rho - \psi_\rho(y')} \frac{\log(1 + t)}{(|t + \psi_\eta(y')|^2 + |y'|^2)^{(N + 2s)/2}}dt dy' \\
= & \int_{B_{\eta/\sqrt{\rho}}' \setminus B_1'} \int_{-1}^{\eta/\rho - \psi_\rho(y')} \frac{\log(1 + t)}{(|t|^2 + |y'|^2)^{(N + 2s)/2}}dt dy' \\
& + \rho \int_{B_{\eta/\sqrt{\rho}}' \setminus B_1'} |y'|^2 \int_{-1}^{\eta/\rho - \psi_\rho(y')} \frac{\log(1 + t)}{(|t|^2 + |y'|^2)^{(N + 2s)/2}}dt dy' \\
=: & I_{21} + I_{22}.
\end{align*}

For $I_{21}$, we notice that for all $|y'| \geq 1$ we have
\begin{align*}
0 \leq & \int_{\eta/\rho - \psi_\rho(y')}^{+\infty} \frac{\log(1 + t)}{(|t|^2 + |y'|^2)^{(N + 2s)/2}}dt \\
\leq & C \int_{\eta/(2\rho)}^{+\infty} \frac{\log(1 + t)}{(t + |y'|)^{N + 2s}}dt \\
= & \frac{C}{-N - 2s + 1} \Big{\{}  \log(1 + t) (t + |y'|)^{-N - 2s + 1} \Big{|}_{\eta/(2\rho)}^{+\infty} - \int_{\eta/(2\rho)}^{+\infty} \frac{dt}{(1 + t)(t + |y'|)^{N + 2s - 1}} \Big{\}} \\
\leq & C \frac{\rho |\log \rho |}{(1 + |y'|)^{N + 2s - 2}},
\end{align*}
from which
\begin{align*}
I_{21} = & \int_{B_{\eta/\sqrt{\rho}}' \setminus B_1'} \int_{-1}^{+\infty} \frac{\log(1 + t)}{(|t|^2 + |y'|^2)^{(N + 2s)/2}}dt dy' + O(\rho |\log \rho |) \int_{B_{\eta/\sqrt{\rho}}' \setminus B_1'} \frac{dy'}{(1 + |y'|)^{N + 2s - 2}} \\
= & \int_{B_{\eta/\sqrt{\rho}}' \setminus B_1'} \int_{-1}^{+\infty} \frac{\log(1 + t)}{(|t|^2 + |y'|^2)^{(N + 2s)/2}}dt dy' + O(\rho^{s + 1/2} |\log \rho |),
\end{align*}
where the $O$-term depends on the smoothness of $\Omega, N, s$ and $\eta$.

Similarly, for $I_{22}$ we have
\begin{align*}
I_{22} = & O(\rho)\int_{B_{\eta/\sqrt{\rho}}' \setminus B_1'} \frac{|y'|^2dy'}{(1 + |y'|^2)^{(N + 2s)/2}} \\
& + \rho \int_{B_{\eta/\sqrt{\rho}}' \setminus B_1'} |y'|^2\int_{1}^{\eta/\rho - \psi_\rho(y')} \frac{\log(1 + t)}{(|t|^2 + |y'|^2)^{(N + 2s)/2}}dt dy' \\
= & O(\rho^{s + 1/2}) + O(\rho) \int_{B_{\eta/\sqrt{\rho}}' \setminus B_1'} |y'|^2 \int_{1}^{2\eta/\rho} \frac{\log(1 + t)}{(t + |y'|)^{N + 2s}}dt dy' ,
\end{align*}
from which, using integration by parts in the last integral, we conclude that
\begin{equation*}
I_{22} = O(\rho^{s + 1/2}) + O(\rho^s |\log \rho|).
\end{equation*}

Gathering the estimates for $I_{21}, I_{22}$, we get
\begin{align}
I_2 = \int_{B_{\eta/\sqrt{\rho}}' \setminus B_1'} \int_{-1}^{+\infty} \frac{\log(1 + t)}{|(t, y')|^{N + 2s}}dt dy' + O(\rho^{s} |\log \rho |).
\end{align}

Finally, for $I_3$, we see that
\begin{align*}
I_3 = \int_{B'_{\eta/\rho} \setminus B'_{\eta/\sqrt{\rho}}} \int_{-1}^{\eta/\rho - \psi_\rho(y')} \frac{\log(1 + t)}{(|t + \psi_\rho(y')|^2 + |y'|^2)^{(N + 2s)/2}}dt dy'.
\end{align*}

Now, since $|y'| \leq \eta/\rho$, we have $|\psi_\rho(y')| \leq C_\Omega \rho |y'|^2 \leq C_\Omega \eta^2/\rho$, from which, by taking $\eta$ small enough we have $\eta/(2\rho) \leq \eta/\rho - \psi_\rho(y') \leq 2\eta/\rho$.

In addition, for $(t, y')$ in the domain of integration, we have
$$
2t\psi_\rho(y') + |\psi_\rho(y')|^2 \leq 4 C_\Omega \eta |y'|^2 + C_\Omega^2 \eta^2 |y'|^2 \leq C \eta |y'|^2,
$$
where $C > 0$ just depends on the smoothness of the boundary. By taking $\eta$ small enough in terms of $C$, by the Mean Value Theorem we have
\begin{align*}
\Big{|}\frac{1}{(|t + \psi_\rho(y')|^2 + |y'|^2)^{(N + 2s)/2}}	 - \frac{1}{|(t, y')|^{N + 2s}} \Big{|} \leq \frac{C|y'|^2}{|(t, y')|^{N + 2s + 2}},
\end{align*}
for some constant $C > 0$ not depending on $\rho, \eta$. Then, we have
\begin{align*}
	I_3 = & \int_{B'_{\eta/\rho} \setminus B'_{\eta/\sqrt{\rho}}} \int_{-1}^{\eta/\rho - \psi_\rho(y')} \frac{\log(1 + t)}{|(t, y')|^{N + 2s}}dt dy' + O(1) \int_{B'_{\eta/\rho} \setminus B'_{\eta/\sqrt{\rho}}} |y'|^2 \int_{-1}^{3\eta/\rho} \frac{|\log(1 + t)|}{|(t, y')|^{N + 2s + 2}}dt dy' \\
	= & \int_{B'_{\eta/\rho} \setminus B'_{\eta/\sqrt{\rho}}} \int_{-1}^{\eta/\rho - \psi_\rho(y')} \frac{\log(1 + t)}{|(t, y')|^{N + 2s}}dt dy' + O(\rho^s |\log \rho|).
	\end{align*}

Thus, joining the estimates for $I_1, I_2, I_3$ above, we conclude that
\begin{align*}
I = & \int_{B'_{\eta/\rho}} \int_{-1}^{\eta/\rho - \psi_\rho(y')} \frac{\log(1 + t)}{|(t, y')|^{N + 2s}}dt dy' + O(\rho^s |\log \rho|) \\
= &  \int_{B'_{\eta/\rho}} \int_{-1}^{+\infty} \frac{\log(1 + t)}{|(t, y')|^{N + 2s}}dt dy' + O(\rho^s |\log \rho|),\\
= & \int_{\R^{N}} \int_{-1}^{+\infty} \frac{\log(1 + t)}{|(t, y')|^{N + 2s}}dt dy' + O(\rho^s |\log \rho|),\\
= & c_s
\end{align*}
where in the second equality we have used that $\eta/\rho - \psi_\rho(y') \geq \eta/(2\rho)$ and
for the third equality we need to establish the following claim
 $$K=\int_{{(B'_{1/\rho})}^c} \int_{-1}^{+\infty} \frac{\log(1 + t)}{|(t, y')|^{N + 2s}}dt dy' =O(\rho^s |\log \rho|).$$
 Recall that $c_s$ is the constat of the one dimensional case son negative.
 Therefore, to finish we just need to establish the last claim.
 For that we will split the integral  
 $$K=\int_{{(B'_{1/\rho})}^c} \int_{-1}^{+\infty} \frac{\log(1 + t)}{|(t, y')|^{N + 2s}}dt dy' =K_1+K_2+K_3,$$
 where, for $i=1,2,3$ we write 
 $$
 K_i=\int_{{(B'_{1/\rho})}^c}J_idy'
 $$ 
with
 $$
 J_1=\int_{-1}^{1} \frac{\log(1 + t)}{|(t, y')|^{N + 2s}}dt, \quad 
J_2=\int_{1}^{1/\sqrt{\rho}} \frac{\log(1 + t)}{|(t, y')|^{N + 2s}}dt, \quad 
 J_3=\int_{1/\sqrt{\rho}}^{+\infty} \frac{\log(1 + t)}{|(t, y')|^{N + 2s}}dt.
 $$
 
To estimate $K_1$ we see that $$J_1\leq \int_{-1}^{1} \frac{|\log(1 + t)|}{|y')|^{N + 2s}}dt,$$
therefore
$$K_1\leq \int_{{(B'_{1/\rho})}^c}\frac{1}{|y'|^{N + 2s}}\leq \rho^{2s+1}$$.
Now for $K_2$ we use Fubbini and a change of variable to get
 $$K_2 =\int_{-1}^{1/\sqrt{\rho}} \frac{\log(1 + t)}{t^{1 + 2s}}\int_{{(B'_{1/(t\rho)})}^c}\frac{1}{(1+|y'|^2)^{\frac{N + 2s}{2}}}dy'dt .$$
 Observe that $t\leq 1/\sqrt{\rho}$, thus $1/\sqrt{\rho}\leq 1/(t\rho)$
 $$K_2 \leq\int_{-1}^{1/\sqrt{\rho}} \frac{\log(1 + t)}{t^{1 + 2s}}\int_{{(B'_{1/(\sqrt{\rho})})}^c}\frac{1}{(1+|y'|^2)^{\frac{N + 2s}{2}}}dy'dt,$$
and then the first integral is finite, thus we get
$$K_2 \leq \rho^{s+1/2}$$

Finally, for $J_3$ we argue similarly as to the estimate $I_{21}$ to get 
$$
J_3\leq \frac{\sqrt{\rho}|\log (\rho)|}{(1+|y'|)^{n+2s-2}},
$$
from where $K_3\leq \rho^{2s-1/2}|\log (\rho)|$. Thus, we get the claim and the proof follows.

\smallskip

\end{proof}

\begin{lema}\label{lemabarrera1}
Assume $s \in (1/2,1)$. For $R, \mu > 0$ and $\beta \in (0,2s- 1)$, consider the function
$$
w_R(x) = \mu (R - R^{\frac{\beta}{\gamma} + 1} d^\beta(x))_+, \quad x \in \Omega.
$$

Then, for all $\beta$, there exists $\mu_0 > 0$ small (depending on $\beta, N, p$ and $s$), such that for all $\mu \leq \mu_0$ and all $R$, the function $w_R$ satisfies
\begin{equation*}
\Ls w_R(x) + |D w_R(x)|^m \leq \frac{c(\beta)}{4} \mu  R^{\frac{\beta}{\gamma} + 1} d^{\beta - 2s}(x) \quad \mbox{in} \ \Omega \setminus \Omega_{R^{-\frac{1}{\gamma}}},
\end{equation*}
where $c = c(\beta) < 0$ is the constant of Lemma~\ref{lemabarrera0}. 
\end{lema}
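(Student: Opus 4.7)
The plan is to reduce the inequality to a one-sided estimate for the censored fractional Laplacian of the untruncated ``parent'' function, exploit the algebraic identity linking $m$, $s$, and $\gamma$ so that the gradient term carries the same natural scaling as the target right-hand side, and then absorb it by choosing $\mu$ small.

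\textbf{Step 1: reduction to an auxiliary function.} Let $\xi_R(x) = \mu R - \mu R^{\beta/\gamma + 1} d^\beta(x)$ (now defined on all of $\Omega$, without truncation). Then $w_R = (\xi_R)_+ \geq \xi_R$ pointwise in $\Omega$, with equality throughout the support $\Omega\setminus\Omega_{R^{-1/\gamma}}$. From the integral definition of $\Ls$ with a positive kernel, for $x$ where $w_R(x)=\xi_R(x)$ the integrand $w_R(z)-w_R(x)\geq \xi_R(z)-\xi_R(x)$, so that $\Ls w_R(x) \leq \Ls \xi_R(x)$. Since $\Ls$ annihilates constants, $\Ls \xi_R(x) = -\mu R^{\beta/\gamma+1}\, \Ls d^\beta(x)$.

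\textbf{Step 2: apply Lemma~\ref{lemabarrera0}(i).} Since $\beta\in(0,2s-1)$, the constant $c=c(\beta)$ from Lemma~\ref{lemabarrera0} is strictly negative. For $d(x)<\delta_0$,
\[
\Ls \xi_R(x) = \mu R^{\beta/\gamma+1} d^{\beta-2s}(x)\bigl(c(\beta)+O(d^s(x))\bigr),
\]
so shrinking $\delta_0$ (depending only on $\beta,N,s,\Omega$) we obtain
\[
\Ls w_R(x) \leq \tfrac{c(\beta)}{2}\,\mu R^{\beta/\gamma+1}\, d^{\beta-2s}(x).
\]

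\textbf{Step 3: scaling of the gradient term.} On the support, $d$ is smooth with $|Dd|=1$, so $|Dw_R(x)|^m = (\mu\beta)^m R^{m(\beta/\gamma+1)}\, d^{m(\beta-1)}(x)$. The key algebraic point is the identity $m(\gamma+1)=2s+\gamma$, which is equivalent to $\gamma=(2s-m)/(m-1)$. Using it to rewrite exponents, one checks
\[
|Dw_R(x)|^m \;=\; \mu^{m-1}\beta^m\bigl(R\,d^\gamma(x)\bigr)^{(m-1)(\beta+\gamma)/\gamma}\,\cdot\, \mu R^{\beta/\gamma+1}\, d^{\beta-2s}(x).
\]
On $\{d(x)\leq R^{-1/\gamma}\}$ the dimensionless factor $(R\,d^\gamma)^{(m-1)(\beta+\gamma)/\gamma}\leq 1$, so $|Dw_R(x)|^m \leq \mu^{m-1}\beta^m \cdot \mu R^{\beta/\gamma+1}\, d^{\beta-2s}(x)$.

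\textbf{Step 4: choice of $\mu_0$.} Combining Steps 2 and 3,
\[
\Ls w_R(x)+|Dw_R(x)|^m \leq \Bigl(\tfrac{c(\beta)}{2}+\mu^{m-1}\beta^m\Bigr)\,\mu R^{\beta/\gamma+1}\, d^{\beta-2s}(x).
\]
Picking $\mu_0>0$ with $\mu_0^{m-1}\beta^m \leq |c(\beta)|/4$ (depending only on $\beta,N,m,s$) makes the bracket at most $c(\beta)/4<0$, yielding the desired inequality.

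The main obstacle is the bookkeeping of the exponents: one must notice that the definition of $\gamma$ is precisely what forces the ratio $|Dw_R|^m/(\mu R^{\beta/\gamma+1}d^{\beta-2s})$ to collapse into the scale-invariant quantity $R\,d^\gamma$, which is tamed by the support constraint. A secondary subtle point is justifying $\Ls w_R \leq \Ls \xi_R$, which uses the global pointwise bound $w_R\geq \xi_R$ combined with the positivity of the kernel in the principal value; the truncation $(\cdot)_+$ only helps us. Everything else is routine.
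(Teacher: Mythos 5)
Your proof is correct and follows essentially the same approach as the paper's: apply Lemma~\ref{lemabarrera0} to bound $\Ls w_R$, compute $Dw_R$ exactly, use the identity $\gamma(m-1)=2s-m$ together with the constraint $d\leq R^{-1/\gamma}$ to collapse the gradient term to the same scaling, and shrink $\mu$. The one place you are more careful than the paper is Step 1: the paper applies Lemma~\ref{lemabarrera0} to $w_R$ directly without comment, whereas the lemma only addresses $d^\beta$ itself and not the truncated profile; your observation that $w_R\geq\xi_R$ globally with equality at the evaluation point, so that the sign-definiteness of the kernel gives $\Ls w_R\leq\Ls\xi_R$, is exactly the missing justification and worth keeping explicit.
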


\begin{proof}
Notice that $w_R(x) = 0$ for all $x$ such that $d(x) \geq R^{-\frac{1}{\gamma}}$. Thus, without loss of generality, we assume $R$ is large enough such that $R^{-\frac{1}{\gamma}} < \delta_0$ in Lemma~\ref{lemabarrera0}. In fact, using that lemma we have
\begin{equation*}
\Ls w_R(x) \leq \mu R^{\frac{\beta}{\gamma} + 1} c(\beta) d^{\beta - 2s}(x)/2.
\end{equation*}

On the other hand, a direct computation leads us to
\begin{equation*}
D w_R(x) = \beta \mu R^{\frac{\beta}{\gamma} + 1} d^{\beta - 1}(x) Dd(x),
\end{equation*}
and from here we get
\begin{align*}
& \Ls w_r(x) + |Dw_R(x)|^m \\
\leq & \ \mu R^{\frac{\beta}{\gamma} + 1} d^{\beta - 2s}(x) \Big{(} c(\beta) + \beta^m \mu^{m-1} R^{\frac{(\beta + \gamma)(m-1)}{\gamma}} d^{m(\beta - 1) - \beta + 2s}(x)\Big{)}.
\end{align*}

Notice that since $m < 2s$ and $2s > 1$, we have $m(\beta - 1) - \beta + 2s > 0$.
Considering that $d(x) \leq R^{-1/\gamma}$, and using that $m(-\gamma - 1) = -\gamma - 2s$, we get
\begin{align*}
\Ls w_r(x) + |Dw_R(x)|^m \leq \mu R^{\frac{\beta}{\gamma} + 1} d^{\beta - 2s}(x) \Big{(} c(\beta)/2 + \beta^m \mu^{m-1} \Big{)}.
\end{align*}

Then, taking $\mu$ small enough, we conclude the result.
\end{proof}

\subsection{Regularity estimates.}

We recall here that for an open set $\mathcal O \subset \R^N$, we denote $\mathcal O_\delta = \{ x \in \mathcal O : \mathrm{dist}(x, \partial \mathcal O) > \delta \}$.
The following result is a straightforward adaptation of similar results provided, for instance, in~\cite{BCCI, BT}, and~\cite{BCCI14}. We provide the details for completeness.
\begin{lema}\label{lemaLipbounded}
Let $s \in (1/2, 1)$, $\Omega \subset \Omega \subseteq \R^N$ with $\Omega'$ open and bounded, with $C^2$ boundary. 
	Let $\lambda \in [0,1]$, $T > 0$, $f \in L^\infty_{loc}(\Omega)$ and $u \in C(\Omega') \cap L^\infty(\Omega)$ be a viscosity solution to the equation
	\begin{equation}\label{equA}
		\lambda u  + \Ls u + T |Du|^m = f \quad \mbox{in} \ \Omega'.
	\end{equation}

\smallskip	

\noindent	
$(1)$ Interior Lipschitz estimates: For each $\delta \in (0,1)$ small enough, there exists $C > 0$ depending on $N, T, m, s, \delta, \| u \|_{L^\infty(\Omega)}$ and $\| f \|_{L^\infty(\Omega'_{\delta})}$, but not on $\mathrm{diam}(\Omega)$ such that
	\begin{align*}
	[u]_{C^{0,1}(\Omega'_{4\delta})} \leq C. 
	\end{align*}
	
\smallskip

\noindent
$(2)$ Interior equicontinuity independent of $T$:	 For each $\delta \in (0,1)$ small enough, there exists $C > 0$ depending on $N, m, s, \delta, \| u \|_{L^\infty(\Omega)}$ and $\| f \|_{L^\infty(\Omega'_{\delta})}$, but not on $\mathrm{diam}(\Omega)$ nor $T$ such that
	\begin{align*}
[u]_{C^{0,\frac{1}{2}}(\Omega'_{4\delta})} \leq C.
	\end{align*}
\end{lema}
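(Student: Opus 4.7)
The plan is to establish both estimates by the Ishii--Lions doubling-of-variables technique, adapted to the censored nonlocal setting as in~\cite{BCCI, BT, BCCI14}. Fix $x_0 \in \Omega'_{4\delta}$ and consider the auxiliary function
$$
\Phi(x,y) = u(x) - u(y) - L_1 \phi(|x-y|) - L_2\bigl(|x-x_0|^2 + |y-x_0|^2\bigr), \qquad (x,y) \in \Omega \times \Omega,
$$
with the following choice of concave modulus: for part (1) I take $\phi(r) = r - \tau r^{1+\theta}$ for small $\tau, \theta > 0$ (so $\phi$ is concave and strictly increasing near $0$); for part (2) I take $\phi(r) = r^{1/2}$. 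The goal is to show that if $L_1, L_2$ are chosen large enough (with $L_1$ independent of $T$ in case (2)), then $\Phi \leq 0$ on $\Omega\times\Omega$, from which the desired modulus of continuity on $B_\delta(x_0) \subset \Omega'_{4\delta}$ is immediate.

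Assume for contradiction $M := \sup \Phi > 0$ is attained at $(\bar x, \bar y)$. Using $\|u\|_{L^\infty(\Omega)}$ and choosing $L_2$ sufficiently large in terms of $\delta$ and $\|u\|_\infty$, one localises $(\bar x,\bar y)$ in $B_\delta(x_0)\times B_\delta(x_0) \subset \Omega'_{3\delta}$; and since $\Phi(\bar x,\bar x) \leq 0 < M$, necessarily $\bar x \neq \bar y$. Set $r_0 = |\bar x - \bar y|$ and $e = (\bar x - \bar y)/r_0$. Writing the test functions $\psi_1(x) = L_1 \phi(|x-\bar y|) + L_2|x-x_0|^2$ and $\psi_2(y) = -L_1\phi(|\bar x - y|) + L_2|y-x_0|^2$, and combining the subsolution property at $\bar x$ with the supersolution property at $\bar y$ via the nonlocal Jensen-type lemma of~\cite{BCCI}, I obtain after subtraction
$$
\lambda M + \mathcal{I} + T\bigl(|D\psi_1(\bar x)|^m - |D\psi_2(\bar y)|^m\bigr) \leq f(\bar x) - f(\bar y),
$$
where $\mathcal{I}$ is the sum of the two censored nonlocal contributions applied to the maximising pair.

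The decisive step is a lower bound on $-\mathcal{I}$. Using the maximum property at $(\bar x,\bar y)$ against admissible increments $(z,z)$ yields a second-difference inequality for $\phi(|\cdot|)$ along the direction $e$, so that after a standard splitting of the integration domain across the scale $r_0$ (as in Lemma~\ref{lemabarrera0}) one arrives at an estimate of the form
$$
-\mathcal{I} \geq c_0 L_1 \tau \theta\, r_0^{2s+\theta-1} - C L_2 \quad \text{for case (1)}, \qquad -\mathcal{I} \geq c_0 L_1\, r_0^{2s-3/2} - C L_2 \quad \text{for case (2)},
$$
with $c_0 > 0$ depending only on $N$ and $s$. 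The gradient term is controlled by $T|D\psi_j|^m \leq C T\bigl(L_1 \phi'(r_0) + L_2 \delta\bigr)^m$. Comparing scales and choosing $L_1$ large (depending on $T, m, N, s, \delta, \|f\|_\infty, \|u\|_\infty$ in case (1), and only on $m, N, s, \delta, \|f\|_\infty, \|u\|_\infty$ in case (2)) produces a contradiction. The independence of $T$ in case (2) is the arithmetic consequence of $s > 1/2$ and $m \leq 2s$: with $\phi' \sim r_0^{-1/2}$, the gradient term is of order $T L_1^m r_0^{-m/2}$ while the nonlocal term is of order $L_1 r_0^{2s-3/2}$, and the inequality $T L_1^m r_0^{-m/2} \leq L_1 r_0^{2s-3/2}$ rearranges to $L_1^{m-1} \leq T^{-1} r_0^{2s - 3/2 + m/2}$; since $r_0$ can be taken small from the start by a preliminary step (replacing $u$ by $u/k$ for $k$ large is not available because of the Hamiltonian, but one may apply the argument in the regime $r_0 \leq r_\star$ for a threshold $r_\star$ and handle $r_0 \geq r_\star$ trivially using $\|u\|_\infty$), the exponent $2s - 3/2 + m/2 \geq 2s - 3/2 + 1/2 = 2s - 1 > 0$ makes $T$ drop out once $L_1$ is sufficiently large relative to $r_\star$.

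The principal obstacle is the quantitative lower bound on $-\mathcal{I}$: unlike the local Ishii matrix inequality, the nonlocal version requires tracking integrands separately in the regions $\{|z| \lesssim r_0\}$ (where the concavity of $\phi$ delivers a truly negative contribution) and $\{|z| \gtrsim r_0\}$ (where only $L^\infty$-type bounds are available), and at the same time absorbing the errors coming from the $L_2|x-x_0|^2$ perturbation and from the fact that the censored operator integrates only over $\Omega$, not over $\R^N$. Establishing $T$-independence in part (2) additionally demands that the dependence of thresholds on $L_1$ be traced throughout the argument with care.
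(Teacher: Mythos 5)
Your plan for part (1) is essentially the paper's: the same concave modulus $\phi(r)=r-\tau r^{1+\theta}$, the cone-splitting nonlocal estimate from \cite{BCCI,BQT}, and a localization device (you use $L_2|x-x_0|^2+L_2|y-x_0|^2$ where the paper uses an additive cut-off $C_1\eta(x)$; both work). The arithmetic $L_1\gtrsim T^{1/(2s-m)}$ at the end is correct, and $T$-dependence of the Lipschitz constant is allowed here.

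Part (2), however, has a genuine gap. Running Ishii--Lions directly with $\phi(r)=r^{1/2}$ does \emph{not} make $T$ drop out, and your own rearrangement shows why. The localization forces $r_0=|\bar x-\bar y|\leq C\|u\|_\infty^2 L_1^{-2}$, so if the exponent $2s-\tfrac32+\tfrac m2$ is positive (as you correctly note), then $r_0^{2s-3/2+m/2}\leq C L_1^{-2(2s-3/2+m/2)}$, and the requirement $L_1^{m-1}\leq T^{-1}r_0^{2s-3/2+m/2}$ becomes $L_1^{2m+4s-4}\leq CT^{-1}$. Since $m>s+\tfrac12$ and $s>\tfrac12$ force $2m+4s-4>0$, this caps $L_1$ from \emph{above} by a quantity that shrinks as $T$ grows; you cannot take $L_1\to\infty$ to reach a contradiction without making $L_1$ depend on $T$. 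The ``handle $r_0\geq r_\star$ trivially'' fallback does not help, because the localization has already pushed $r_0$ below $CL_1^{-2}$, so the only regime left is the small-$r_0$ one where the bound on $L_1$ bites. In short, the Hamiltonian term $TL_1^m r_0^{-m/2}$ outscales the nonlocal term $L_1 r_0^{2s-3/2}$ once $L_1$ is large, and no choice of $r_\star$ repairs this.

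The paper avoids this by exploiting the homogeneity of the Hamiltonian rather than the choice of $\phi$: one compares $\tilde u=\mu u$ (which solves the equation with coefficient $T\mu^{1-m}>T$ in front of the gradient term) against $u$ itself, using the \emph{Lipschitz} modulus $\phi(r)=r-\tau r^{1+\theta}$ but with constant $L_\mu=L/(1-\mu)$. In the subtracted viscosity inequalities, the Hamiltonian contribution is $H\approx -TL_\mu^m|\phi'|^m(\mu^{1-m}-1)\leq 0$ once $L$ is large relative to $\delta^{-1}(1-\mu)^{-1}$, because $\mu^{1-m}>1$. Thus the gradient term is eliminated with a favorable sign and $T$ never enters the final inequality. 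One then proves $\sup\{\mu u(x)-u(y)-L_\mu\phi(|x-y|)-\eta\}\leq(\|u\|_\infty+1)(1-\mu)$, which gives $|u(x)-u(y)|\leq\frac{L}{1-\mu}|x-y|+C(1-\mu)$; minimizing over $\mu\in(0,1)$ produces the $r^{1/2}$ modulus with constants independent of $T$. You need this multiplicative doubling, not a different $\phi$, to obtain $T$-independence.
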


\begin{proof}
	With the convention $u = u^+ - u^-$, replacing $f$ by $f + \lambda \sup_\Omega \{ u^- \} \in L^\infty(\Omega)$, we can assume $u \geq 0$ in $\Omega$.
	
For $t_0, c_0 \in (0,1)$ small, consider the function $\varphi(t) = t - c_0t^{1 + \alpha}$ for $t \in [0,t_0]$ and $\varphi(t) = \varphi(t_0)$ for $t > t_0$, and such that $\varphi(t_0) > 0$.	
	
From now on, we assume $\delta_0 < t_0$. Let $\delta < \delta_0$, and let $\eta$ be a smooth cut-off function with $0 \leq \eta \leq 1$, $\eta = 0$ in $\Omega_{4\delta}$, $\eta = 1$ in $\Omega \setminus \Omega_{3\delta}$.

Let $C_1 = \| u \|_{L^\infty(\Omega)} + 1$. We claim that there exists $L > 1$ (depending on $\delta, \| u \|_\infty$ and the data) such that
\begin{equation*}
\sup_{x,y \in \Omega} \{ u(x) - u(y) - L\varphi(|x - y|) - C_1 \eta(x) \} \leq 0.
\end{equation*}

This concludes the Lipschitz estimate for $u$ on $\Omega_{4\delta}$.

By contradiction, we assume the existence of $x,y \in \Omega$ such that
$$
\Phi(x,y) := u(x) - u(y) - L\varphi(|x - y|) - C_1\eta(x) > 0,
$$
and from the definition of $\eta$ and $\delta_0$ we have that all such $x$ must satisfy $x \in \Omega_{3\delta}$. Moreover, if $y \in \Omega \setminus \Omega_{2\delta}$, using that $\varphi$ is nondecreasing we have
\begin{equation*}
C_1 - L \varphi(\delta) \geq u(x) - u(y) - L\varphi(|x - y|) - C_1\eta(x) > 0,
\end{equation*}
and this is impossible if we take  
$$
L > \frac{C_1}{\varphi(\delta)}.
$$

We constraint $L$ to this regime, from which we obtain that the supremum above is positive, and it is attained at some $(\bar x, \bar y) \in \Omega_{2\delta} \times \Omega_{2\delta}$. Notice in addition that necessarily $\bar x \neq \bar y$ and moreover
%
$$
L \varphi(|\bar x - \bar y|) \leq 2 \| u \|_{L^\infty(\Omega)}.
$$
from which, by taking $L \geq 2C_1 \delta^{-1}$ we can conclude that
\begin{equation}\label{x-y}
|\bar x - \bar y| \leq C_0 L^{-1} \| u \|_{L^\infty(\Omega)},
\end{equation}
for some $C_0$ depending on $c_0, t_0$, but not on $L$ nor $\delta$.

Then, writing $\phi(x,y) := L\varphi(|x - y|) + C_1\eta(x)$, we can use $x \mapsto u(\bar y) +\phi(x, \bar y)$ as a test function for $u$ at $\bar x$, and $y \mapsto u(\bar x) - \phi(\bar x, y)$ as a test function for $u$ at $\bar y$. The arguments in~\cite[Lemma 3.1]{CS09} apply for this equation and we can evaluate the nonlocal operators pointwise in the viscosity inequalities  to conclude that
\begin{equation}\label{testing}
\begin{split}
\lambda u(\bar x) + \Ls u(\bar x) + T |D_x \phi(\bar x, \bar y)|^m & \leq f(\bar x), \\
\lambda u(\bar y) + \Ls u(\bar y) + T |D_x \phi(\bar x, \bar y)|^m & \geq f(\bar y).
\end{split}
\end{equation}

Then, we subtract both inequalities and using that $u(\bar x) \geq u(\bar y)$, we arrive at
\begin{equation}\label{testLip}
\begin{split}
& H \leq I + H + 2 \| f \|_{L^\infty(\Omega_\delta)}, \quad \mbox{with}, \\
& I := -\Ls u(\bar x) + \Ls u(\bar y), \\
& H := -T(|D_x \phi(\bar x, \bar y)|^m - |D_x \phi(\bar x, \bar y)|^m).
\end{split}
\end{equation}

We estimate each term separately. In fact, a change of variable leads us to
\begin{align*}
I = & C_{N,s} \mathrm{P.V.} \int_{\Omega - \bar x} \frac{u(\bar x + z) - u(\bar x)}{|z|^{N + 2s}}dz - C_{N,s} \mathrm{P.V.} \int_{\Omega - \bar y} \frac{u(\bar y + z) - u(\bar y)}{|z|^{N + 2s}}dz.
\end{align*}

Denote $\bar a = \bar x - \bar y$. Taking $L$ large enough in~\eqref{x-y}  in order that $|\bar a | \leq \delta/8$, we conclude that $\bar x, \bar y$ are at distance at least $\delta/8$ from the sets $\Omega \setminus (\Omega \pm \bar a)$. Using that $u \geq 0$, we can estimate $I$ as
\begin{align*}
I \leq & C_{N,s}\mathrm{P.V.} \int_{(\Omega - \bar x)\cap (\Omega - \bar y)} \frac{u(\bar x + z) - u(\bar x)}{|z|^{N + 2s}}dz - C_{N,s}\mathrm{P.V.} \int_{(\Omega - \bar y) \cap (\Omega - \bar x)} \frac{u(\bar y + z) - u(\bar y)}{|z|^{N + 2s}}dz \\
& + C_{N,s} \int_{\Omega \setminus (\Omega + \bar a)} u(z) |\bar x - z|^{-(N + 2s)}dz + C_{N,s} \| u \|_{L^\infty(\Omega_\delta)} \int_{\Omega \setminus (\Omega - \bar a)} |z - \bar y|^{-(N + 2s)}dz.
\end{align*}

This leads us to the existence of $C > 0$ not depending on $\delta$ nor $L$ such that
\begin{align*}
& \int_{\Omega \setminus (\Omega + \bar a)} u(z) |\bar x - z|^{-(N + 2s)}dz \leq C \| u \|_{L^\infty(\Omega)} \delta^{-2s}, \\
& \int_{\Omega \setminus (\Omega - \bar a)} |z - \bar y|^{-(N + 2s)}dz \leq C \delta^{-2s},
\end{align*}
from which we conclude the existence of $C$ just depending on $N,s$ such that
\begin{align*}
I \leq I_1 + I_2 + C (\| u \|_{L^\infty(\Omega)} + 1)\delta^{-2s},
\end{align*}
with 
\begin{align*}
I_1 := & C_{N,s} \mathrm{P.V.} \int_{(\Omega - \bar x)\cap (\Omega - \bar y)} \frac{u(\bar x + z) - u(\bar x)}{|z|^{N + 2s}}dz, \\
I_2 := & C_{N,s}\mathrm{P.V.} \int_{(\Omega - \bar y) \cap (\Omega - \bar x)} \frac{u(\bar y + z) - u(\bar y)}{|z|^{N + 2s}}dz
\end{align*}

At this point, we introduce the set
$$
\mathcal C = \mathcal C_{r, \rho} := \{ z \in B_{r} : |\bar a \cdot z| \geq (1 - \rho) |\bar a| |z| \},
$$ 
for some $r, \rho > 0$ to be fixed small. In fact, following Corollary 9 in~\cite{BCCI}, we pick
$$
r = r_0 |\bar a|^{1 + \alpha}, \ \rho = \rho_0 |\bar a|^{2\alpha},
$$
for some $r_0, \rho_0 \in (0,1)$ small but independent of $\delta, L$. By enlarging $L$ if necessary, we have that $\mathcal C \subset (\Omega - \bar x) \cap (\Omega - \bar y)$, from which, using Lemmas 2.2 and 2.4 in~\cite{BQT} we obtain the existence of $c > 0$ just depending on $\alpha, N, s$ such that
\begin{align*}
I_1 - I_2 \leq -cL |\bar a|^{1 - 2s + \alpha(N + 2 - 2s)} + C_1 \mathrm{P.V.} \int_{(\Omega - \bar x) \cap (\Omega - \bar y)} [\eta(\bar x + z) - \eta(\bar x)]  |z|^{-(N + 2s)}dz
\end{align*}

Using that $\| D \eta \| \leq C \delta^{-1}, \|D^2 \eta \|_\infty \leq C \delta^{-2}$ for some constant $C > 0$ just depending on the dimension and $\Omega$, we have the integral term in the r.h.s. of the last inequality can be bounded above by $C C_1 \delta^{-2}$. Hence,
\begin{equation*}
I_1 - I_2 \leq -cL |\bar a|^{1 - 2s + \alpha(N + 2 - 2s)} + C C_1 \delta^{-2},
\end{equation*}
from which we arrive at
\begin{align}\label{ILip}
I \leq -cL |\bar a|^{1 - 2s + \alpha(N + 2 - 2s)}  + C C_1 \delta^{-2}.
\end{align}

On the other hand, denoting $\hat a = \bar a / |\bar a|$, we see that
\begin{align*}
H = & -T(|L \varphi'(|\bar a|) \hat{a} + D \eta (\bar x)|^m - |-L \varphi'(|\bar a|) \hat{a}|^m)\\
= & -T L^{m} |\varphi' (|\bar a|)|^m \Big{(} |\hat a - L^{-1} (\varphi'(|\bar a|))^{-1} D\eta(\bar x)|^m - 1\Big{)},
\end{align*}
and by taking $L$ large enough in terms of $\delta$, we conclude that
$$
H \leq C C_1 T L^{m - 1} \delta^{- 1},
$$
for some $C > 0$ not depending on $\delta$ nor $L$. Gathering this inequality,~\eqref{ILip} and replacing them into~\eqref{testLip} we arrive at
\begin{equation*}
L |\bar a|^{1 - 2s + \alpha(N + 2 - 2s)}  \leq C C_1 \delta^{-2} + C C_1 T L^{m - 1} \delta^{ - 1} + 2\| f \|_{L^\infty(\Omega_\delta)}.
\end{equation*}

Take $\epsilon \in (0,1)$. Then, we can take $\alpha$ small order to $1 - 2s + \alpha(N + 2 - 2s) < 1 - 2s + \epsilon < 0$. By~\eqref{x-y}, we get that
\begin{align*}
	cL^{2s - \epsilon} C_1^{1 - 2s + \epsilon}  \leq C C_1 \delta^{-2} + C C_1 T L^{m - 1} \delta^{ - 1} + 2\| f \|_{L^\infty(\Omega_\delta)},
\end{align*}
for some $c > 0$ not depending on $L, \delta$ nor $u$. Thus, since $m \leq 2s$, taking $L$ large in terms of $\delta, T$ and $\| f \|_{L^\infty(\Omega_\delta)}$ we arrive at a contradiction.

\medskip

\noindent
$(2)$ We follow closely the arguments in~\cite[Lemma 1]{BCCI14}. We consider $\mu \in (0,1)$ and denote $\tilde u = \mu u$, that solves the problem
\begin{equation*}
\lambda \tilde u + \Ls \tilde u + \mu^{1-m} |D\tilde u|^m = \mu f \quad \mbox{in} \ \Omega'.
\end{equation*}

Now, for $L_\mu = L/(1 - \mu)$ with $L > 1$, we consider the quantity
\begin{align*}
M_\mu = \sup_{x, y \in \Omega} \{ \tilde u(x) - u(y) - L_\mu \varphi(|x - y|) - C_1 \eta(x) \}.
\end{align*}

We will prove that there exists $L_0 >1$ not depending on $\mu$ such that
\begin{equation*}
M_\mu \leq (\| u \|_{L^\infty(\Omega)} + 1) (1 - \mu),
\end{equation*}
for all $L \geq L_0$ and all $\mu \in (0,1)$. Notice that if the previous inequality holds, after modifying $L$ by a constant not depending on $\mu$, we arrive at
\begin{equation*}
|u(x) - u(y)| \leq \frac{L}{1 - \mu} |x - y| + (2\| u \|_{L^\infty(\Omega)} + 1) (1 - \mu), \quad x, y \in \Omega'_{4\delta},
\end{equation*}
and taking infimum in $\mu \in (0,1)$, we conclude that $u$ is $C^{0, \frac{1}{2}}(\Omega_{4\delta})$.

We proceed as before, assuming that $M_\mu > (\| u \|_{L^\infty(\Omega)} + 1)(1 - \mu)$. Then, denoting 
$$
\tilde \Phi(x,y) = \tilde u(x) - u(y) - L\varphi(|x - y|) - C_1 \eta(x),
$$ 
we have the existence of $\bar x, \bar y \in \Omega_{2\delta}$ such that that $M_\mu = \sup_{x, y \in \Omega} \tilde \Phi(\bar x, \bar y) = \tilde \Phi(\bar x, \bar y)$, and therefore we get
$$
(\| u \|_{L^\infty(\Omega)} + 1)(1 - \mu) < \mu |u(\bar x) - u(\bar y)| + \| u \|_{L^\infty(\Omega)}(1 - \mu),
$$
from which $\bar x \neq \bar y$ for all $\mu \in (0,1)$.

Then, from this point we follow exactly the same proof as before, where this time the viscosity inequalities in~\eqref{testing} take the form
\begin{equation*}
\begin{split}
\lambda \tilde u(\bar x) + \Ls \tilde u(\bar x) + T \mu^{1 - m} |D_x \phi(\bar x, \bar y)|^m & \leq \mu f(\bar x), \\
\lambda u(\bar y) + \Ls u(\bar y) +|D_x \phi(\bar x, \bar y)|^m & \geq f(\bar y),
\end{split}
\end{equation*}
in view of the equation solver by $\tilde u$, and from here, subtracting both inequalities, since $\tilde u(\bar x) - u(\bar y) \geq (\| u \|_{L^\infty(\Omega)} + 1)(1-\mu)$, inequality~\eqref{testLip} takes the form
\begin{equation}\label{testLip2}
\begin{split}
& 0 \leq I + H + (1 - \mu) \Big{(} \| f \|_{L^\infty(\Omega_\delta)} + \lambda (\| u \|_{L^\infty(\Omega)} + 1) \Big{)}, \quad \mbox{with}, \\
& I := -\Ls \tilde u(\bar x) + \Ls u(\bar y), \\
& H := -T(\mu^{1 - m}|D_x \phi(\bar x, \bar y)|^m - |D_x \phi(\bar x, \bar y)|^m).
\end{split}
\end{equation}

The estimate for $I$ is the same as above, and the constants involved do not depend on $\mu$. On the other hand, recalling that $\bar a = \bar x - \bar y$, for $H$ we see that
\begin{align*}
H 
= & -T L^{m} |\varphi' (|\bar a|)|^m \Big{(} \mu^{1 - m} |\hat a - L^{-1} (\varphi'(|\bar a|))^{-1} D\eta(\bar x)|^m - 1\Big{)}.
\end{align*}

By~\eqref{x-y} for all $L$ large enough we have $\varphi'(|\bar a|) > 1/2$ and by the estimates for $D\eta$ we have the estimates
\begin{align*}
|\hat a - L^{-1} (\varphi'(|\bar a|))^{-1} D\eta(\bar x)|^m \geq (1 - CL^{-1}\delta^{-1})^m \geq 1 - CmL^{-1}\delta^{-1},
\end{align*}
for some $C > 1$ not depending on $L, \delta$ nor $m$. Using this, we arrive at
\begin{align*}
H \leq & T L^{m} |\varphi' (|\bar a|)|^m (1 - \mu^{1 - m} + CmL^{-1}\delta^{-1})\\
\leq & T L^{m} |\varphi' (|\bar a|)|^m \Big{(} c_m(\mu - 1) + CmL^{-1}\delta^{-1}\Big{)},
\end{align*}
for some $c_m > 0$ if $m > 1$. From here, taking $L \geq \tilde c_m \delta^{-1} (1 - \mu)^{-1}$ for some $\tilde c_m$ large enough, we conclude that
$H \leq 0$. Gathering the estimates as in the previous case, we arrive at a contradiction by taking $L$ large in terms of the data, $\delta$ and $1 - \mu$, but not on $T$. This concludes the proof.
%
%
\end{proof}

Recall that we denote $d(x) = \mathrm{dist}(x, \partial \Omega)$ for $x$ close to the boundary and it is extended as a smooth, strictly positive function in the whole of $\Omega$.

The above lemma leads us to the following estimate for $L^1$ solutions.
\begin{prop}\label{lemaequiLip}
	Let $\lambda \in [0,1]$, $f \in L^\infty_{loc}(\Omega)$ and $u \in C(\Omega)$ be a viscosity solution to the equation~\eqref{eq},
	bounded from below, satisfying that $u(x) \leq C_0 d^{-\gamma}(x)$ for some $C_0 > 0$. Then, $u \in C^{1, \alpha}(\Omega)$ for some $\alpha \in (0,1)$ and for each $\delta > 0$ small enough we have the estimate
	\begin{align*}
		[u]_{C^{0,1}(\Omega_\delta)} \leq C \delta^{-\gamma - 1}.
	\end{align*}
	
	Moreover, if $f$ is H\"older continuous in $\Omega$, then $u \in C^{2s + \alpha}(\Omega)$ for some $\alpha \in (0,1)$.
\end{prop}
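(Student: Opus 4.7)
The plan is a scaling reduction to the bounded-solution framework of Lemma~\ref{lemaLipbounded}. Fix $x_0 \in \Omega_\delta$, let $\delta_0 := d(x_0)/8 \geq \delta/8$, and define the rescaled function $v(y) := \delta_0^\gamma u(x_0 + \delta_0 y)$ on $\Omega^* := (\Omega - x_0)/\delta_0$, so $d_{\Omega^*}(0)=8$. A direct change of variables gives
\[ \lambda \delta_0^{2s} v + (-\Delta)_{\Omega^*}^s v + |Dv|^m = \delta_0^{2s+\gamma} f(x_0 + \delta_0 \cdot) \quad \text{in } \Omega^*, \]
where the coefficient $\delta_0^{2s+\gamma-m(\gamma+1)}$ in front of $|Dv|^m$ equals $1$ by the very definition $\gamma = (2s-m)/(m-1)$. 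The upper bound $u \leq C_0 d^{-\gamma}$ transforms into the scale-invariant bound $v(y) \leq C_0 d_{\Omega^*}(y)^{-\gamma}$, which in particular gives a $\delta_0$-independent bound on $\|v\|_{L^\infty(B_4)}$, while hypothesis~\eqref{compor-f} ensures that $\delta_0^{2s+\gamma} f(x_0 + \delta_0 \cdot)$ is also uniformly bounded on $B_4$ for $\delta_0$ small.

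We then apply Lemma~\ref{lemaLipbounded}(1) to $v$ on $B_4 \subseteq \Omega^*$ with $T=1$, which requires only a minor adaptation: the assumption $u \in L^\infty(\Omega)$ is used exclusively to estimate the nonlocal tails and to calibrate the cutoff constant $C_1$ in the proof. The latter only needs $v$ bounded on $B_4$, which we have; the former takes the form of integrals $\int_{\Omega^*} |v(z)|(1+|z|)^{-(N+2s)}dz$ and similar pieces, which converge $\delta_0$-uniformly thanks to $v \leq C_0 d_{\Omega^*}^{-\gamma}$ and the integrability of $d_{\Omega^*}^{-\gamma}$ on $\Omega^*$ (valid since $\gamma < 1$). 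This yields $[v]_{C^{0,1}(B_1)} \leq C$ independent of $\delta_0$. Undoing the scaling gives $[u]_{C^{0,1}(B_{\delta_0}(x_0))} \leq C \delta_0^{-\gamma - 1} \leq C' \delta^{-\gamma-1}$, and a standard chaining argument over a covering of $\Omega_\delta$ by such balls produces the claimed global estimate.

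Higher regularity follows by a bootstrap. From the local Lipschitz bound, $|Du|^m \in L^\infty_{\mathrm{loc}}(\Omega)$, so $\Ls u = f - \lambda u - |Du|^m \in L^\infty_{\mathrm{loc}}(\Omega)$; interior regularity for the censored fractional Laplacian then yields $u \in C^\beta_{\mathrm{loc}}(\Omega)$ for every $\beta < 2s$, and since $s > 1/2$ implies $2s > 1$, we get $u \in C^{1,\alpha}_{\mathrm{loc}}(\Omega)$ for some $\alpha > 0$. Assuming additionally that $f$ is locally H\"older, $|Du|^m$ is locally H\"older, hence $\Ls u$ is locally H\"older, and Schauder-type estimates for the censored fractional Laplacian upgrade $u$ to $C^{2s+\alpha}_{\mathrm{loc}}(\Omega)$. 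I expect the main technical difficulty to be the tail adaptation inside Lemma~\ref{lemaLipbounded}: one must verify carefully that replacing the global $L^\infty$ norm of $v$ by the weighted estimate coming from $v \lesssim d_{\Omega^*}^{-\gamma}$ preserves all the $\delta_0$-independent constants in the doubling-variables argument.
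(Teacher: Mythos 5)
Your scaling $v(y)=\delta_0^\gamma u(x_0+\delta_0 y)$ on $\Omega^*=(\Omega-x_0)/\delta_0$ is exactly the scaling used in the paper, and the identification $\gamma=(2s-m)/(m-1)$ as the critical exponent making $|Dv|^m$ coefficient-free is correct. The difference is in how the two proofs handle the fact that $u$ (and hence $v$) is unbounded near $\partial\Omega^*$: Lemma~\ref{lemaLipbounded} is stated and proved under the hypothesis $u\in L^\infty(\Omega)$, and the paper does \emph{not} try to relax this. Instead, it first truncates: it sets $\tilde u = u\,\chi_{\Omega_\rho}$ with $\rho = d(x_0)/4$, so that $\tilde u$ is \emph{bounded} on $\Omega$, and shows that $\tilde u$ solves the same equation on $\Omega_{2\rho}$ with an extra source term
\[
h_\rho(x)=C_{N,s}\int_{\Omega\setminus\Omega_\rho}\frac{u(z)}{|z-x|^{N+2s}}\,dz,
\]
encoding the discarded nonlocal tail. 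A significant portion of the paper's proof is then devoted to establishing the sharp bound $\|h_\rho\|_{L^\infty(\Omega_{2\rho})}\leq C\rho^{-\gamma-2s}$ by a boundary parametrization argument; after rescaling, $h_\rho$ contributes a term of size $O(1)$ in $\tilde f$, and Lemma~\ref{lemaLipbounded} applies \emph{verbatim} to the bounded rescaled function.

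Your alternative is to skip the truncation and argue that Lemma~\ref{lemaLipbounded}(1) extends, with a ``minor adaptation,'' to functions that are merely in $C(\Omega^*)\cap L^1_w(\Omega^*)$ with the weighted bound $v\lesssim d_{\Omega^*}^{-\gamma}$. This is where the gap lies: it is not minor. Inside the proof of Lemma~\ref{lemaLipbounded}, the $L^\infty$ norm is used not only in the tail integrals $\int_{\Omega^*\setminus(\Omega^*+\bar a)}v(z)|x-z|^{-(N+2s)}dz$ (which you correctly identify) but also in the localization of the maximum point of $\Phi(x,y)$, in the lower bound for $L$ ($L>C_1/\varphi(\delta)$ with $C_1=\|u\|_\infty+1$), and in the crucial estimate \eqref{x-y} on $|\bar a|$. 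Replacing all of these consistently by the weighted bound, and verifying that the resulting constant is uniform in $\delta_0$, is exactly the quantitative content that the paper instead packages into the explicit estimate of $h_\rho$. Your tail heuristic (``integrability of $d_{\Omega^*}^{-\gamma}$'') is also not quite right as stated, since $\int_{\Omega^*}d_{\Omega^*}^{-\gamma}$ blows up as $\delta_0\to0$; what saves it is the kernel decay $|x-z|^{-(N+2s)}$ combined with the fact that $B_4$ is at distance $\geq4$ from $\partial\Omega^*$, which gives a bound of the form $O(|\bar a|^{1-\gamma})$. This can be made to work, but you have to redo the doubling-of-variables argument with these modified tails, rather than cite the lemma as a black box. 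The cleaner route is the paper's: truncate, estimate $h_\rho$, apply Lemma~\ref{lemaLipbounded} to a genuinely bounded function. Your bootstrap for $C^{1,\alpha}$ and $C^{2s+\alpha}$ regularity matches the paper's (interior estimates of Caffarelli--Silvestre and Serra), and is fine.
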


\begin{proof}
%
%
%
%
The $C^{1, \alpha}$ regularity is a consequence of the interior Lipschitz estimates of Lemma~\ref{lemaLipbounded} together with interior $C^{1, \alpha}$ estimates for the fractional Laplacian, see~\cite[Theorem 12.1]{CS09}. If $f$ is H\"older continuous, higher-order regularity comes from~\cite[Theorem 1.1]{S15}. In the application of these results, the fact that the current operator is the censored fractional Laplacian plays no significant role, since estimates are interior, and we can extend $u$ as a function of $\R^N$ in a $C^\alpha$ fashion, eventually redefining it in a small neighborhood of $\partial \Omega$.

Now we deal with the gradient bounds depending on the distance to the boundary.
Let $x_0 \in \Omega$, denote $\rho = d(x_0)/4 > 0$ and let $\tilde u = u \chi_{\Omega_\rho} \in C(\Omega_\rho) \cap L^\infty(\Omega)$. We focus on the case $x_0$ is close to the boundary.

It is easy to see that for all $x \in \Omega$ with $d(x) > \rho$ we have
\begin{equation*}
-\Ls \tilde u = -\Ls u - C_{N, s}\int_{\Omega \setminus \Omega_\rho} \frac{u(z)}{|z - x|^{N + 2s}}dz,
\end{equation*}
from which $\tilde u$ solves the problem
\begin{equation}\label{eqtildeu}
\lambda u + \Ls \tilde u + |D\tilde u|^m = f + h_\rho \quad \mbox{in} \ \Omega_{2 \rho},
\end{equation}
where
$$
h_\rho(x) = C_{N,s}\int_{\Omega \setminus \Omega_\rho} \frac{u(z)}{|z - x|^{N + 2s}}dz. 
$$

Notice that the function $h$ is bounded and smooth in $\Omega \setminus \Omega_{2 \rho}$. We claim that there exists $C > 0$ depending on $N, s, \gamma$ and $\Omega$ such that 
$$
\| h \|_{L^\infty(\Omega_{2\rho})} \leq C \rho^{-\gamma - 2s}.
$$

 In fact, given $x \in \Omega_{2\rho}$ close enough to $\partial \Omega_\rho$ (and hence, close to $\partial \Omega$), and after translation and rotation, we can assume that $x = (0', x_N)$, that the projection of $x$ onto $\partial \Omega$ is the origin, and that on a neighborhood of the origin, the boundary is parametrized by a $C^2$ function $\psi: B_{r_0}' \subset \R^{N -1} \to \R$ such that $\psi(0') = 0, D\psi(0') = 0'$ and therefore $|D^2 \psi(y')| \leq C_\Omega |y'|^2$ for all $|y'| \leq r_0$, for some constants $r_0, C_\Omega$ just depending on $\Omega$.

Then, for an open set $\mathcal U \subset \R^N$ such that $B_{r_0/2} \subset \mathcal U$, we have
$$
\Omega_{\rho} \cap \mathcal U \subseteq  \{ (y', \psi(y') + t) \in \R^N : y' \in B_{r_0}', \ t \in (0, 3\rho/2) \}.  
$$

From now on, we assume $\rho < r_0/8$.

For $x$ such that $d(x) \geq r_0$, we have 
$$
|h_\rho(x)| \leq C r_0^{-(N + 2s)} \int_{\Omega \setminus \Omega_\rho} d^\gamma(z)dz \leq C,
$$
for some $C > 0$ depending on $r_0, N, s$ and $\Omega$.

For $d(x) < r_0$, we split the integral as
$$
h_\rho(x) = C_{N,s}\int_{(\Omega \setminus \Omega_\rho) \cap \mathcal U} \frac{u(z)}{|z - x|^{N + 2s}}dz + C_{N,s}\int_{(\Omega \setminus \Omega_\rho) \cap \mathcal U^c} \frac{u(z)}{|z - x|^{N + 2s}}dz,
$$
and using similar arguments as the case $d(x) \geq r_0$ we arrive at
\begin{align*}
h_\rho(x)	= C_{N,s}\int_{(\Omega \setminus \Omega_\rho) \cap \mathcal U} \frac{u(z)}{|z - x|^{N + 2s}}dz + O(1) =: J + O(1).
\end{align*}

Using the estimates for $u$ and the change of variables $z = (y', \psi(y') + t), \ |y'| \leq r_0, \ t \in (0, 3\rho/2)$, we get
$$
J \leq C_{N, s} C_\Omega \int_{B_{r_0}'} \int_{0}^{3\rho/2} \frac{t^{-\gamma}}{(|y'|^2 + (x_N - \psi(y') - t)^2)^{\frac{N + 2s}{2}}}dt dy'.
$$

Now, by the properties of the map $\psi$, we can assume that $|\psi(y')| \leq \rho/4$ if we shorten $\rho$ is necessary. Thus, recalling that $_N \geq 2\rho$, we have $|x_N - \psi(y') - t| \geq \rho/8$ for $|y'| \leq \rho$. Thus
\begin{align*}
J \leq & C \int_{|y'|\leq \rho} \int_{0}^{3\rho/2} \frac{t^{-\gamma}}{(|y'|^2 + \rho^2)^{\frac{N + 2s}{2}}}dt dy' + C \int_{\rho \leq |y'|\leq r_0} \int_{0}^{3\rho/2} \frac{t^{-\gamma}}{|y'|^{N + 2s}}dt dy' \\
\leq & C \rho^{1-\gamma} \Big{(} \rho^{-1 - 2s} \int_{B_1'} \frac{d z'}{(|z'|^2 + 1)^{\frac{N + 2s}{2}}} + \rho^{-1 - 2s} \int_{|z'| \geq 1} \frac{dz'}{|z'|^{N + 2s}} \Big{)} \\
\leq & C \rho^{-\gamma - 2s},
\end{align*}
and the claim follows.

\medskip

Now, we prove the gradient estimates. Let $\Omega^\rho := \rho^{-1} (\Omega - x_0)$ and define
$$
v(y) = \rho^{\gamma} \tilde u(x_0 + \rho y), \quad y \in \Omega^\rho,
$$
where $\tilde u = u \chi_{\Omega_\rho}$ is as the begining of the proof. 
Notice that $v$ is bounded, uniformly on $\rho$. 

Moreover, we see that
$$
B_2 \subset \rho^{-1} (\Omega_{2 \rho} - x_0) \subset \Omega^\rho,
$$
with $\mathrm{dist}(B_2, \partial \Omega^\rho) \geq 2$.
By~\eqref{eqtildeu} together with the fact that $\gamma + 2s = (\gamma + 1)m$, we can write
\begin{align*}
(-\Delta)_{\Omega^\rho}^s v(y) + |D v(y)|^m =& \rho^{\gamma + 2s} \Ls \tilde u(x_0 + \rho y) + \rho^{(\gamma + 1)m} |D \tilde u(x_0 + \rho y)|^m
\\ =& \rho^{\gamma + 2s} \Big{(} \Ls \tilde u(x_0 + \rho y) +  |D \tilde u(x_0 + \rho y)|^m \Big{)} \\
= & \rho^{\gamma + 2s} (-\lambda \rho^{-\gamma}v(y) + f(x_0 + \rho y) + h_\rho(x_0 + \rho y)),
\end{align*}
from which we conclude that
$$
\lambda \rho^{2s} v + (-\Delta)_{\Omega^\rho}^s v + |D v|^m = \tilde f \quad \mbox{in} \ B_2,
$$
where $\tilde f(y) := \rho^{\gamma + 2s} (f(x_0 + \rho y) + h_\rho(x_0 + \rho y))$. It is easy to see that $\tilde f \in C(\bar B_2)$, with $L^\infty$ estimates independent of $\rho$. Hence, we can use interior estimates given in Lemma~\ref{lemaLipbounded} to conclude that
$$
\| Dv \|_{L^\infty(B_{1})} \leq C,
$$
for some constant not depending on $\rho$. Coming back to $\tilde u$ we conclude that
$$
\| D u \|_{L^\infty(B_{\rho}(x_0))} = \| D \tilde u \|_{L^\infty(B_{\rho}(x_0))} \leq C \rho^{-\gamma - 1},
$$
from which the result follows by the fact that $\rho = 4 d(x_0)$.
\end{proof}

%


\section{The discounted problem.}
\label{secdiscount}

This section is devoted to the proof of Theorem~\ref{teo1}. We start with the following
\begin{prop}\label{propexistence} 
Under the assumptions of Theorem~\ref{teo1}, there exists a viscosity solution $u = u_\lambda$ to~\eqref{eq} satisfying~\eqref{blow-up}. 

Moreover, there exists constants $0 < c_1 \leq c_2$ and $C > 0$ such that 
$$
-C\lambda^{-1} + c_1 g_{\gamma}(x)\leq u(x) \leq c_2 g_{\gamma}(x) + C \lambda^{-1},
$$ 
for all $x \in \Omega$ close to the boundary. 

This solution is minimal in the sense that another blow-up solution $v$ to~\eqref{eq} satisfies $u \leq v$ in $\Omega$.
\end{prop}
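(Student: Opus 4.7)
The strategy is a viscosity Perron method driven by boundary barriers built from Lemma~\ref{lemabarrera0}. Since $\lambda > 0$ makes \eqref{eq} proper, bounded viscosity comparison is available on subdomains, and the task reduces to exhibiting sub- and supersolutions matching the boundary blow-up rate $g_\gamma$; interior regularity will follow from Proposition~\ref{lemaequiLip} and minimality from a further comparison argument on subdomains.

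I would set
\[
\underline u_\lambda := c_1 g_\gamma - C\lambda^{-1}, \qquad \overline u_\lambda := c_2 g_\gamma + C\lambda^{-1},
\]
using the smooth extension of $d$ (and hence of $g_\gamma$) to all of $\Omega$, with $C>0$ to be chosen large and $0<c_1<c_2$ to be tuned. Focusing on the case $\gamma>0$ (the critical case $\gamma=0$, $m=2s$ being analogous via Lemma~\ref{lemabarrera0}(ii)), the identity $m(\gamma+1)=\gamma+2s$ gives $|D(c g_\gamma)|^m = c^m\gamma^m d^{-\gamma-2s}$ near $\partial\Omega$, and Lemma~\ref{lemabarrera0} yields
\[
\Ls(c g_\gamma)(x) = -c\, c_\gamma\, d^{-\gamma-2s}(x) + \mathrm{l.o.t.},
\]
where $c_\gamma>0$ is the constant from~\eqref{defc0}. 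By~\eqref{compor-f}, $f(x)=C_1 d^{-\gamma-2s}(x)+o(d^{-\gamma-2s})$ near $\partial\Omega$, so the strict sub/supersolution inequalities there reduce to $c_1^m\gamma^m - c_1 c_\gamma < C_1 < c_2^m\gamma^m - c_2 c_\gamma$, that is, to choosing $c_1 < C_0 < c_2$ (with $C_0$ the positive root in~\eqref{defc0}). Enlarging $C$ then absorbs all bounded interior contributions, so $\underline u_\lambda$ and $\overline u_\lambda$ become global sub- and supersolutions with $\underline u_\lambda \leq \overline u_\lambda$.

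With the pair at hand, I would define
\[
u_\lambda(x) := \inf\bigl\{w(x) : w \text{ viscosity supersolution of \eqref{eq}, } w \geq \underline u_\lambda \bigr\},
\]
which is well-defined since $\overline u_\lambda$ is admissible. The standard Perron machinery for proper nonlocal equations (USC/LSC envelope arguments combined with interior comparison) produces a continuous viscosity solution $u_\lambda$ of \eqref{eq} with $\underline u_\lambda \leq u_\lambda \leq \overline u_\lambda$. This immediately yields the boundary blow-up \eqref{blow-up} and the quantitative two-sided estimate of the statement, and then Proposition~\ref{lemaequiLip} gives $u_\lambda\in C^{1,\alpha}(\Omega)$, since the hypothesis $u_\lambda \leq c_2 d^{-\gamma} + C/\lambda$ is exactly what is required there.

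For the minimality, let $v$ be any other solution of~\eqref{eq}-\eqref{blow-up}. The key point is to prove $v\geq\underline u_\lambda$ on $\Omega$, from which $v$ enters the infimum defining $u_\lambda$ and $u_\lambda \leq v$ follows at once. I would establish this by comparison on each subdomain $\Omega_\delta$: since $v$ blows up on $\partial\Omega$ while $\underline u_\lambda$ grows only like $c_1 g_\gamma$ with $c_1<C_0$ strictly below the natural rate forced by~\eqref{compor-f}, one has $v\geq\underline u_\lambda$ on $\partial\Omega_\delta$ for every sufficiently small $\delta$; the viscosity comparison principle on $\Omega_\delta$ (bounded functions, proper equation) propagates the inequality inward, and $\delta\to 0$ closes the argument. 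This last step is the main obstacle: it really relies on the \emph{strictness} of the barrier inequality, i.e.\ on $c_1<C_0$, to guarantee that the arbitrary blow-up supersolution $v$ dominates $\underline u_\lambda$ on $\partial\Omega_\delta$ for $\delta$ small, and it is precisely for this reason that the barrier construction was designed with a slack.
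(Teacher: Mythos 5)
Your barriers are set up in the spirit of the paper, and the supersolution $\overline u_\lambda = c_2 g_\gamma + C\lambda^{-1}$ with $c_2 = C_0 + \epsilon$ is exactly the one the paper uses. However, the route from there to existence and, especially, to minimality has a genuine gap. The paper never constructs the solution by a Perron infimum with blow-up barriers: it solves the bounded truncated Dirichlet problems~\eqref{R} by Perron with \emph{bounded} sub/supersolutions, gets an increasing family $u_R$ dominated by the supersolution, passes to the limit by interior estimates, and only then extracts the lower blow-up rate by comparing $u_R$ with the \emph{bounded, compactly-supported-near-$\partial\Omega$} subsolution $w_{\mu,R}$ of Lemma~\ref{lemabarrera1} and optimizing in $R$. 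This scaffolding is there precisely because every comparison invoked is between a bounded function and another function (bounded or blowing up), where the sup of the difference is forced to be interior. In your version, both the Perron construction and the minimality step require a comparison principle between two functions that \emph{both} blow up at $\partial\Omega$; this is not a consequence of the standard nonlocal viscosity comparison and is not established anywhere in the paper.

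The concrete place where your argument breaks is the claim ``one has $v\ge\underline u_\lambda$ on $\partial\Omega_\delta$ for every sufficiently small $\delta$, since $v$ blows up while $\underline u_\lambda$ grows only like $c_1 g_\gamma$ with $c_1<C_0$.'' At the stage of Proposition~\ref{propexistence} you have no a priori lower bound on the blow-up rate of an \emph{arbitrary} large solution $v$; the hypothesis~\eqref{blow-up} only says $v\to+\infty$, which is compatible with, say, $v\sim\log\log(1/d)$ as far as you know at this point. So $v\ge\underline u_\lambda$ near $\partial\Omega$ is exactly the kind of statement one would like to deduce \emph{from} comparison with a subsolution, which is circular here (and for the nonlocal operator, comparison on $\Omega_\delta$ also needs ordering on all of $\Omega\setminus\Omega_\delta$, not just on $\partial\Omega_\delta$). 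Also, your $\underline u_\lambda$ cannot be compared with the bounded $u_R$'s either, since $\underline u_\lambda - u_R\to+\infty$ near $\partial\Omega$ and the sup is then not interior. The paper's fix is twofold: for the lower estimate, replace $\underline u_\lambda$ by the bounded subsolution of Lemma~\ref{lemabarrera1} (and a log-type bounded barrier when $m=2s$), so that $w_{\mu,R}-u_R\to-\infty$ near $\partial\Omega$ and comparison closes; for minimality, compare any large $v$ directly with the bounded $u_R$, observing that $v\ge R\ge u_R$ near $\partial\Omega$, hence $v\ge u_R$ and, letting $R\to\infty$, $v\ge u$. You would need to replace both your Perron step and your minimality argument by this bounded-approximation scheme for the proof to be complete.
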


\begin{proof}
Let $\delta_0$ be the constant in Lemma~\ref{lemabarrera0}, and $C_0 > 0$ the  unique constant solving \eqref{defc0}.

For some $C^* > 0$ to be fixed and each $\epsilon \in (0,1)$, we consider the function
$$
W(x) = (C_0 + \epsilon) g_{\gamma}(x) + \lambda^{-1} C^* , \quad x \in \Omega.
$$

For $d(x) < \delta_0$, using Lemma~\ref{lemabarrera0} we have the existence of a constant $c_\epsilon > 0$ such that
\begin{align*}
\lambda W + \Ls W + |DW|^m \geq (C_1 + c_\epsilon) d(x)^{-\gamma - 2s} + C^* 
\end{align*}
and then we can take $\delta_\epsilon < \delta_0$ such that $W$ is a supersolution to~\eqref{eq} in $\Omega \setminus \Omega_{\delta_\epsilon}$, in view of~\eqref{compor-f}. Now, for such $\epsilon$, we can find $C^* > 0$  large enough (depending on the $C^{2s + \alpha}(\Omega_{\delta_\epsilon/2})$ estimates for $g_{\gamma}$) such that $W$ is a supersolution to~\eqref{eq} in $\Omega$. 

For each $\lambda>0$ and $R\in\N$, denote $u_{R}$ the unique solution to given of 
\begin{equation} \label{R}
\left \{ \begin{array}{rll}
\lambda u + \Ls u + |Du|^m & = \min \{ f, R \}  \quad  &\text{in }\Omega \\
u & =  R \quad &  \text{in }\partial\Omega,
\end{array} \right .
\end{equation}

The existence of such a problem follows from a standard Perron Method. 
In fact, by similar arguments as above, we have that for $\beta \in (0, 2s - 1)$ fixed, the function $\bar U_R = R + \min \{ A_R d^\beta(x), B_R \}$ is a supersolution to the problem for $0 < A_R < B_R$ large enough depending on $R$, and similarly the function $\underline U_R = R + \max \{ -A_R d^\beta(x), -B_R \}$ is a subsolution for the problem by suitable choice of $A_R, B_R$. Since comparison principle holds for sub and supersolutions which are ordered on the boundary, we conclude the existence of $u_R \in C(\bar \Omega) \cap C^{1, \alpha}_{loc}(\Omega)$.

Also by comparison, we have $u_R \leq u_{R + 1} \leq W$ in $\Omega$ for all $R$. Thus, $\{ u_R \}_R$ is uniformly locally bounded in $\Omega$. By the regularity estimates of Section 2.2 , it is also uniformly $C^\alpha_{loc}(\Omega)$ for some $\alpha > 0$, from which, by Arzela-Ascoli Theorem and stability of viscosity solutions, we conclude the existence of a function $u \in C^\alpha(\Omega)$ solving~\eqref{eq} and such that $u_{R} \nearrow u$ locally uniform in $\Omega$ as $R \nearrow +\infty$.	
The upper bound, in the proposition statement, follows from the upper barrier $W$.

To get the lower bound, we divide in cases: for $m<2s$, we follow closely the arguments of~\cite{RT}, Theorem 2.1  together with the barrier of Lemma~\ref{lemabarrera1}. We provide the details here for completeness. Let $w_{\mu, R}$ be as in Lemma~\ref{lemabarrera1}. Taking $\mu \in (0,1)$ small enough, we have $w_{\mu, R}$ is a viscosity subsolution to the Dirichlet problem~\eqref{R} for all $R$ (here we have used that $f$ is nonnegative). Then, by comparison we have
$$
 w_{\mu, R} \leq u_{R} \quad \mbox{in} \quad\ \Omega.
$$

For $a > 0$, denote 
$$
\theta_R(a) = (R - R^{\frac{\beta}{\gamma} + 1}a^{\beta})_+.
$$

Notice that this function attains its positive maximum at 
$$
R(a) = ((\beta + \gamma) \gamma^{-1})^{-\frac{\gamma}{\beta}} a^{-\gamma}, 
$$
with maximum value
$$
\theta_a(R(a)) = \frac{\beta}{\gamma} \Big{(} 1 + \frac{\beta}{\gamma} \Big{)}^{-1 - \frac{\gamma}{\beta}} a^{-\gamma}. 
$$

Since $u \geq u_{R}$ in $\Omega$ for all $R$, for all $x$ and all $R$ we have
$$
u(x)  \geq \mu (R - R^{\frac{\beta}{\gamma} + 1}d^\beta(x))_+ ,
$$
and we take the maximum value of $\theta_{d(x)}$ in this last expression to conclude the existence of a constant $c = c(\beta, \gamma) > 0$ such that
$$
u(x) \geq \mu c d^{-\gamma}(x) 
$$
from which the lower bound for the boundary blow-up follows in the case $m < 2s$. 

\smallskip

In the case $m=2s$, for $\beta \in (0, 2s-1)$ we consider the function
$$
w_\beta(x) = \frac{\alpha}{\beta}(1-d(x)^\beta),
$$
and notice that $w_\beta(x) = \alpha \beta^{-1}(1 - e^{\beta \log(d(x))}) \leq -\alpha \log(d(x))$ for all $x \in \Omega$.

Notice that by the regularity  of $c(\beta)$ in Lemma~\ref{lemabarrera0}, there exists $\beta_0 \in (0,2s - 1)$ such that  $c(\beta)/\beta\leq c'(0)/2<0$  for all $\beta \in (0, \beta_0)$ . Using this and Lemma~\ref{lemabarrera0}, for each $\delta < \delta_0$ and $x \in \Omega \setminus \Omega_{\delta}$ we see that
 \begin{align*}
\lambda w_\beta + \Ls w_\beta(x) + |D w_\beta(x)|^m \leq & \alpha d^{\beta-2s} \Big{(} -\lambda d^{2s - \beta}(x)  \log(d(x)) + \frac{c(\beta)}{\beta}+\alpha^{m-1}d^{\beta(2s-1)}(x) \Big{)} \\
\leq & \alpha d^{\beta-2s} (\lambda C\delta^{\theta} - c'(0)/2 + \alpha^{m - 1} \delta^{\beta(2s - 1)}),
\end{align*}
for some $\theta \in (0,1)$ and $C > 0$ depending on $\theta$ but not on $\delta$.

Then, taking $\alpha$ is such that $\alpha^{m-1}=-c'(0)/4$ and $\delta$ small enough such that $\lambda C \delta^\theta \leq c'(0)/4$, we arrive at
\begin{equation*}
\lambda w + \Ls w_\beta(x) + |D w_\beta(x)|^m \leq  0 \quad \mbox{in} \ \Omega \setminus \Omega_{\delta}.
\end{equation*}

Fixed $\delta$ in that way, there exists $C_\delta > 0$ such that for all  $R\in \N$ the function
$$
w_R(x)=\frac{\alpha}{1/R}(1-d^{1/R}(x))-C_\delta
$$ 
is a subsolution to~\eqref{eq} (recall $f \geq 0$). Thus,
$w_R\leq u$ and taking $R \to \infty$ we find the lower bound for $u$.

Finally, the minimality of $u$ follows by comparison, since every solution to~\eqref{eq} satisfying~\eqref{blow-up} is a supersolution $v$ for problem~\eqref{R}, and since $u_R$ is bounded we conclude that $u_R \leq v$ for all $R$. Thus, passing to the limit as $R \to +\infty$, we conclude the result.
\end{proof}

Now we investigate a more precise blow-up profile of the solution found in the previous proposition. For this, we require the following definition: we say that $u$ is a \textsl{strict viscosity solution} to problem~\eqref{eq} if
$$
\lambda u+ \Ls u + |Du|^m > f \quad \mbox{in} \ \Omega,
$$
in the viscositty sense.

\begin{lema}\label{perron-alternativo}
Assume hypotheses of Theorem~\ref{teo1} are in force. Suppose there exist $\bar U, \ubar U\in C(\Omega)\cap L^1(\Omega)$ with $\ubar U \leq \bar U $ in $\Omega$, with $\bar U$ supersolution of~\eqref{eq}, $\ubar U$ subsolution of~\eqref{eq} and such that one of them is strict. 

Then there exists a solution $u \in C^\alpha(\Omega) \cap L^1(\Omega)$ of \eqref{eq} satisfying $\ubar U\leq u\leq \bar U$. 
\end{lema}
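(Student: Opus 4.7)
The plan is to run Ishii's Perron construction for viscosity solutions, adapted to the nonlocal censored setting where $\bar U$ and $\ubar U$ may blow up at $\partial\Omega$ but lie in $L^1(\Omega)$. Define the candidate
$$
u(x) := \sup\bigl\{ v(x) : v \in C(\Omega),\ \ubar U \leq v \leq \bar U \text{ in } \Omega,\ v \text{ is a viscosity subsolution of } \eqref{eq} \bigr\}.
$$
The admissible class is nonempty (it contains $\ubar U$, or a continuous regularization of it), and by construction $\ubar U \leq u \leq \bar U$ in $\Omega$, which automatically places $u$ in $L^1(\Omega)$.

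First I would show that the upper semicontinuous envelope $u^*$ is a viscosity subsolution of~\eqref{eq}. This is the usual stability under sup and under usc envelope. The only nonlocal subtlety is that when a smooth test function $\varphi$ touches $u^*$ from above at $x_0 \in \Omega$, the censored integral $\mathrm{P.V.}\int_\Omega (u^*(z)-u^*(x_0))|x_0 - z|^{-N-2s}\,dz$ must be pointwise well-defined; this follows from $|u^*| \leq \max\{|\ubar U|, |\bar U|\} \in L^1(\Omega)$ together with the standard reduction of~\cite[Lemma 3.1]{CS09}.

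Next I would prove that the lower semicontinuous envelope $u_*$ is a viscosity supersolution of~\eqref{eq}; this is where the strictness hypothesis plays its essential role. Assume $\bar U$ is a strict supersolution (the case of strict $\ubar U$ is symmetric). If $u_*$ fails to be a supersolution at some $x_0 \in \Omega$ via a smooth $\varphi$ touching from below with $\lambda\varphi(x_0) + \Ls\varphi(x_0) + |D\varphi(x_0)|^m < f(x_0) - \eta$, one first rules out $u_*(x_0) = \bar U(x_0)$: otherwise $\varphi$ would touch $\bar U$ from below at $x_0$ and the strict supersolution inequality for $\bar U$ would directly contradict the displayed inequality. Hence $u_*(x_0) < \bar U(x_0)$ with a strict gap on a small ball $B_r(x_0)$, and the classical bump
$$
w(x) := \max\{u(x),\ \varphi(x) + \delta - \epsilon|x - x_0|^2\}\quad\text{in } B_r(x_0),\qquad w=u\quad\text{outside},
$$
is, for $\delta, \epsilon$ sufficiently small, an admissible subsolution with $w(x_0) > u(x_0)$, contradicting the maximality of $u$.

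Finally, combining $u_* \leq u \leq u^*$ with the sub/super properties above, the interior regularity provided by Proposition~\ref{lemaequiLip} upgrades $u$ to be locally $C^{1,\alpha}(\Omega)$, hence continuous, so $u_* = u = u^*$ and $u$ is a bona fide viscosity solution to~\eqref{eq} satisfying the enclosure $\ubar U \leq u \leq \bar U$; the $L^1(\Omega)$ regularity is inherited from this enclosure. The delicate step in my view is verifying that the bumped function $w$ remains a subsolution for the \emph{nonlocal} operator $\Ls$: because the censored integral depends on the values of $w$ throughout $\Omega$, one must invoke the max-of-subsolutions property with the full kernel, and this is precisely where the strict inequality for $\bar U$ (or $\ubar U$) is needed, both to preserve the enclosure after the bump and to ensure that the inequality at $x_0$ propagates to a neighborhood by continuity.
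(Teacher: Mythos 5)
The paper does not spell out this proof: it refers to Proposition~2.2 in~\cite{DQTe} and says the adaptation to the censored case is straightforward, so there is no paper argument to compare against line by line. Your outline is the standard Ishii--Perron construction, which is a reasonable route, but as written it has a genuine gap at the very last step, and your account of where the strictness hypothesis enters is off target.

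The gap is in passing from ``$u^*$ is a subsolution and $u_*$ is a supersolution'' to ``$u$ is continuous, hence $u_*=u=u^*$.'' You invoke Proposition~\ref{lemaequiLip} to upgrade $u$ to $C^{1,\alpha}_{loc}$, but that proposition (and Lemma~\ref{lemaLipbounded} behind it) takes $u\in C(\Omega)$, respectively $u\in C(\Omega')\cap L^\infty(\Omega)$, as a hypothesis; it is a regularity result for \emph{already continuous} viscosity solutions, not for a possibly discontinuous envelope pair $(u_*,u^*)$. So the argument is circular: you need continuity to apply the regularity, and you are using the regularity to obtain continuity. In the classical Perron scheme this step is closed by a comparison principle between the subsolution $u^*$ and the supersolution $u_*$. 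Here both functions may blow up at $\partial\Omega$ like $g_\gamma$, the only boundary information you have is the sandwich $\ubar U\le u_*\le u^*\le\bar U$, and that does \emph{not} give $u^*\le u_*$ near $\partial\Omega$; so the comparison principle is exactly the nontrivial point in this blow-up setting and you do not address it. This is almost certainly where the strictness hypothesis is actually used in the reference (one of the classical devices is to use a strict super/subsolution to build a one-parameter family of barriers and run a perturbed comparison), but it is precisely the piece you skip.

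A secondary point: your stated roles for strictness do not hold up. Preserving the enclosure after the bump only needs the pointwise gap $u_*(x_0)<\bar U(x_0)$, and the propagation of the test inequality to a neighborhood of $x_0$ comes from smoothness of $\varphi$ and continuity of $f$ --- neither requires $\bar U$ (or $\ubar U$) to be a \emph{strict} super/subsolution. Likewise, in the excluded case $u_*(x_0)=\bar U(x_0)$, the \emph{non-strict} supersolution inequality for $\bar U$, combined with the monotonicity of the truncated nonlocal term $\Ls[B_\delta^c]$ under $u_*\le\bar U$ with equality at $x_0$, already yields the contradiction, so strictness is not what saves you there either. You should pin down concretely how the strictness feeds into the missing comparison step; as the outline stands, the hypothesis is invoked but not genuinely used, and the key identity $u_*=u^*$ is asserted by a circular appeal to interior regularity.
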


The proof follows along the same lines of Proposition 2.2 in~\cite{DQTe} with straightforward adaptations to the censored case. 

\begin{lema}\label{perron}
Assume the hypotheses of Theorem~\ref{teo1} are in force. Then, for each $\epsilon \in (0,1)$, there exists a solution $v_\epsilon$ to~\eqref{eq} which satisfies
$$
C_0 - \epsilon \leq \liminf_{x \to \partial \Omega} \frac{v_\epsilon(x)}{g_{\gamma}(x)} \leq \limsup_{x \to \partial \Omega} \frac{v_\epsilon(x)}{g_{\gamma}(x)} \leq C_0 + \epsilon,
$$
\end{lema}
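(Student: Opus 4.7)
My plan is to apply the Perron-type existence result of Lemma~\ref{perron-alternativo} with explicit barriers whose leading boundary profiles are $(C_0 \mp \epsilon) g_\gamma$. Concretely, I would set
$$
\ubar U_\epsilon(x) = (C_0 - \epsilon) g_\gamma(x) - \lambda^{-1} C^*, \qquad \bar U_\epsilon(x) = (C_0 + \epsilon) g_\gamma(x) + \lambda^{-1} C^*,
$$
for a large constant $C^* > 0$ to be chosen. Both functions belong to $C(\Omega) \cap L^1(\Omega)$ since $\gamma < 1$ (which holds because $m > s + 1/2$), and clearly $\ubar U_\epsilon \leq \bar U_\epsilon$. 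The supersolution $\bar U_\epsilon$ is essentially the one already used in Proposition~\ref{propexistence}, while $\ubar U_\epsilon$ is its mirror image.

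The heart of the argument is to verify strict sub/supersolution properties near $\partial \Omega$. Applying Lemma~\ref{lemabarrera0} to $g_\gamma$, using $|Dg_\gamma|^m = \gamma^m d^{-\gamma - 2s}(1 + O(d))$ (resp.\ $d^{-2s}$ when $m = 2s$, recalling $(\gamma+1)m = \gamma + 2s$), observing that the linear and constant parts contribute only a $\lambda V = O(d^{-\gamma})$ (or $O(|\log d|)$) term which is of lower order than $d^{-\gamma - 2s}$, and invoking~\eqref{compor-f}, one finds for $V^\sigma = (C_0 + \sigma \epsilon) g_\gamma + \sigma \lambda^{-1} C^*$ with $\sigma \in \{+,-\}$:
$$
\lambda V^\sigma + \Ls V^\sigma + |DV^\sigma|^m - f = d^{-\gamma - 2s}\bigl(\psi(C_0 + \sigma \epsilon) + o(1)\bigr),
$$
where $\psi(t) = -t c_\gamma + \gamma^m t^m - C_1$ in the case $m < 2s$, and $\psi(t) = -tc + t^{2s} - C_1$ in the case $m = 2s$. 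By~\eqref{defc0}, $\psi(C_0) = 0$; convexity of $\psi$ with $\psi(0) = -C_1 < 0$ forces $\psi'(C_0) > 0$, hence $\psi(C_0 - \epsilon) < 0 < \psi(C_0 + \epsilon)$ for all small $\epsilon > 0$. This yields a strict subsolution inequality for $\ubar U_\epsilon$ and a strict supersolution inequality for $\bar U_\epsilon$ on a boundary neighborhood $\Omega \setminus \Omega_{\delta_\epsilon}$.

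On the interior region $\Omega_{\delta_\epsilon}$, the quantities $g_\gamma$, $\Ls g_\gamma$, $|Dg_\gamma|^m$ and $f$ are uniformly bounded. A sufficiently large choice of $C^*$ makes the shift $\mp C^*$ coming from $\lambda \cdot (\pm \lambda^{-1} C^*)$ dominate all other contributions, so the strict sub/supersolution property extends to the whole of $\Omega$. Lemma~\ref{perron-alternativo} then delivers a solution $v_\epsilon \in C^\alpha(\Omega) \cap L^1(\Omega)$ of~\eqref{eq} sandwiched as $\ubar U_\epsilon \leq v_\epsilon \leq \bar U_\epsilon$. Dividing by $g_\gamma(x) \to +\infty$ as $d(x) \to 0^+$ immediately gives the announced
$$
C_0 - \epsilon \leq \liminf_{d(x) \to 0^+} \frac{v_\epsilon(x)}{g_\gamma(x)} \leq \limsup_{d(x) \to 0^+} \frac{v_\epsilon(x)}{g_\gamma(x)} \leq C_0 + \epsilon.
$$

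The main technical hurdle is the boundary expansion: one must verify that the several error sources (the $O(d^s)$ from Lemma~\ref{lemabarrera0}(i), the $O(d)$ from $|Dg_\gamma|^m$, and the $o(1)$ from~\eqref{compor-f}) aggregate into a genuine $o(1)$ correction against the dominant $d^{-\gamma - 2s}$ scale, so that the sign of $\psi(C_0 \pm \epsilon)$ controls the inequality unambiguously. In the critical case $m = 2s$, the parallel logarithmic bookkeeping based on Lemma~\ref{lemabarrera0}(ii) replaces $O(d^s)$ by $O(d^s |\log d|)$ against a $d^{-2s}$ leading term, which is still $o(1)$ and yields the same conclusion.
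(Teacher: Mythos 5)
Your proof is correct and follows essentially the same route as the paper: both use the barriers $(C_0 \pm \epsilon) g_\gamma \pm \lambda^{-1}C^*$ (strict supersolution/subsolution via Lemma~\ref{lemabarrera0}, the exponent relation $(\gamma+1)m = \gamma + 2s$, and the assumption~\eqref{compor-f}), and then invoke the Perron-type Lemma~\ref{perron-alternativo} to produce a solution sandwiched between them. The paper's proof is considerably terser (it refers back to Proposition~\ref{propexistence} for the barrier verification), whereas you spell out the boundary expansion and the sign analysis $\psi(C_0 - \epsilon) < 0 < \psi(C_0 + \epsilon)$ via convexity of $\psi$ and $\psi'(C_0) > 0$; note only that your appeal to ``$\psi(0) = -C_1 < 0$'' implicitly assumes $C_1 > 0$, and in the borderline case $C_1 = 0$ one should instead observe directly that $\psi'(0) = -c_\gamma < 0$ together with convexity still forces $\psi'(C_0) > 0$.
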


\begin{proof}
Recall definition of $C_0$  in~\eqref{defc0}. Consider the functions
$$
W_\epsilon^{\pm}(x) = (C_0 \pm \epsilon) g_{\gamma}(x) \pm \lambda^{-1} C_\epsilon, \ x \in \Omega.
$$

As in Proposition~\ref{propexistence}, we can take $C_\epsilon = C(\epsilon, f) > 0$ such that
$$
\lambda W_\epsilon^+ + \Ls W_\epsilon^+ + |DW_\epsilon^+|^m > f \quad \mbox{in} \ \Omega,
$$
from which $V_\epsilon^+$ is a strict supersolution to~\eqref{eq}. Similarly, $V_\epsilon^-$ is a strict subsolution. Thus by Lemma \ref{perron-alternativo} the exists  $v_\epsilon  \in C^\alpha(\Omega) \cap L^1(\Omega)$ a solution to \eqref{eq} such that $V_\epsilon^- \leq v_\epsilon \leq V_\epsilon$, from which the result follows.
\end{proof}

\begin{lema}\label{lema-Veron}
Assume the hypotheses of Theorem~\ref{teo1} are in force, and let $u$ be the minimal large solution of ~\eqref{eq} given by Proposition~\ref{propexistence}. Assume $f\geq 0$ and $v$ is a solution of~\eqref{eq} such that 
$$
v(x) \leq c_2 g_{\gamma}(x).
$$

Then, $u=v$ in $\Omega$.
\end{lema}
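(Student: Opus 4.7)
The strategy combines the minimality inequality $u\le v$ (from Proposition~\ref{propexistence}) with a Lasry--Lions-style scaling trick and a careful boundary analysis. The key observation is that for $\theta\in(0,1)$ the function $w_\theta:=(1-\theta)v$ is a subsolution of~\eqref{eq}: using the equation satisfied by $v$ we have
\[
\lambda w_\theta+\Ls w_\theta+|Dw_\theta|^m=(1-\theta)f-\bigl[(1-\theta)-(1-\theta)^m\bigr]|Dv|^m\le(1-\theta)f\le f,
\]
where the last step uses $f\ge 0$ together with $(1-\theta)>(1-\theta)^m$ for $m>1$. The goal is to show $w_\theta\le u$ in $\Omega$ for every $\theta\in(0,1)$ and then conclude $v\le u$ by letting $\theta\to 0^+$.

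To achieve this I would run an interior maximum argument on $\Phi_\theta:=(1-\theta)v-u$. Assuming that $\Phi_\theta\to -\infty$ at $\partial\Omega$, its supremum is attained at some interior point $x_\theta^\ast$, where the maximum conditions give $Du(x_\theta^\ast)=(1-\theta)Dv(x_\theta^\ast)$ and $\Ls\Phi_\theta(x_\theta^\ast)\ge 0$. Using the pointwise equations (justified by the $C^{1,\alpha}$ interior regularity from Proposition~\ref{lemaequiLip}) together with $|Du(x_\theta^\ast)|^m=(1-\theta)^m|Dv(x_\theta^\ast)|^m$, one obtains
\[
\lambda\Phi_\theta(x_\theta^\ast)+\Ls\Phi_\theta(x_\theta^\ast)=-\theta f(x_\theta^\ast)-\bigl[(1-\theta)-(1-\theta)^m\bigr]|Dv(x_\theta^\ast)|^m\le 0.
\]
Combined with $\Ls\Phi_\theta(x_\theta^\ast)\ge 0$, this forces $\Phi_\theta(x_\theta^\ast)\le 0$, so $\sup_\Omega\Phi_\theta\le 0$ and $(1-\theta)v\le u$ in $\Omega$. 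Taking $\theta\to 0^+$ yields $v\le u$, which paired with $u\le v$ finishes the proof.

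\textbf{Main obstacle.} The whole scheme hinges on justifying $\Phi_\theta\to -\infty$ at $\partial\Omega$ for every $\theta\in(0,1)$. With only the raw estimates $v\le c_2 g_\gamma$ (hypothesis) and $u\ge c_1 g_\gamma-C/\lambda$ (Proposition~\ref{propexistence}), one merely gets $\Phi_\theta\le\bigl((1-\theta)c_2-c_1\bigr)g_\gamma+C$, which tends to $-\infty$ only in the restricted range $\theta>1-c_1/c_2$; passing to the limit there delivers nothing better than $v\le(c_2/c_1)u$, not uniqueness. To extend the admissible range all the way down to $\theta\to 0^+$, I would first sharpen both boundary rates of $u$ towards the common value $C_0$: the upper bound $u\le(C_0+\eta)g_\gamma+K_\eta$, for arbitrary $\eta>0$, follows from the minimality of $u$ applied to the solution $v_\eta$ of Lemma~\ref{perron}, whereas the matching lower bound $u\ge(C_0-\eta)g_\gamma-K_\eta$ is obtained by comparing $u$ with the strict subsolution $V_\eta^-=(C_0-\eta)g_\gamma-K_\eta/\lambda$ constructed in the proof of Lemma~\ref{perron}. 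This second comparison is the delicate technical step: since $V_\eta^-$ itself blows up on $\partial\Omega$, the standard viscosity comparison principle does not apply directly, and one has to argue on the monotone Perron approximations $u_R$ from the proof of Proposition~\ref{propexistence}, exploiting the strictness of $V_\eta^-$ on interior subdomains and letting $R\to+\infty$. With both refined rates in hand, the boundary bound on $\Phi_\theta$ becomes $\bigl((1-\theta)(C_0+\eta)-(C_0-\eta)\bigr)g_\gamma+K$, whose coefficient is strictly negative for any fixed $\theta>0$ once $\eta$ is chosen small enough; the interior maximum argument then applies in the full range $\theta\in(0,1)$, and letting $\theta\to 0^+$ yields $v\le u$, completing the proof.
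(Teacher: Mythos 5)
The key obstacle you flag yourself is, in fact, fatal to your route. Your interior-maximum argument needs $\Phi_\theta=(1-\theta)v-u\to-\infty$ at $\partial\Omega$ for every $\theta\in(0,1)$, and you correctly observe that the crude bounds $v\le c_2 g_\gamma$, $u\ge c_1 g_\gamma-C/\lambda$ only give this for $\theta>1-c_1/c_2$. Your proposed repair --- sharpening the \emph{lower} rate of $u$ to $(C_0-\eta)g_\gamma-K_\eta$ by comparing $u$ with the blow-up subsolution $V_\eta^-$ --- cannot be carried out by any of the tools available at this point. The approximations $u_{\lambda,R}$ are \emph{bounded}, equal to $R$ on $\partial\Omega$, while $V_\eta^-$ blows up; hence $V_\eta^->u_{\lambda,R}$ in a whole boundary strip for each fixed $R$, and the comparison principle on $\Omega$ is out of reach. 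Restricting to an interior subdomain $\Omega_\delta$ just trades the problem for a boundary estimate $V_\eta^-\le u_{\lambda,R}$ on $\partial\Omega_\delta$, which is precisely the quantitative lower bound you are trying to prove. The only lower barriers actually comparable to the $u_{\lambda,R}$ are the truncated ones of Lemma~\ref{lemabarrera1}, and those deliver $u\ge\mu c\,d^{-\gamma}$ with a \emph{small} constant $\mu c$, not $C_0-\eta$. Moreover, in the logic of the paper the sharp lower rate $\liminf u/g_\gamma\ge C_0$ is \emph{deduced from} Lemma~\ref{lema-Veron} together with Lemma~\ref{perron}; invoking it inside the proof of Lemma~\ref{lema-Veron} would be circular. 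So after letting $\theta\to(1-c_1/c_2)^+$ your argument only yields $v\le(c_2/c_1)u$, not $v=u$.

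The paper closes the gap differently, and it is worth seeing why that succeeds where your version does not. Instead of scaling $v$ down and trying to push it below $u$ directly, the paper reflects $v$ \emph{across} $u$: it sets $w=(1+\alpha)u-\alpha v$, with $\alpha$ small enough that $2\alpha v\le u$ (this only needs the \emph{one-sided} bounds you already have, no sharp rate). Convexity of $|\cdot|^m$ makes $w$ a supersolution, while $\theta u$ with $\theta<\alpha$ is a subsolution with $\theta u\le w\le u$. Perron (Lemma~\ref{perron-alternativo}) then produces a large solution $z$ with $\theta u\le z\le w\le u$; if $v(x_0)>u(x_0)$ at some point, then $w(x_0)<u(x_0)$, so $z(x_0)<u(x_0)$, contradicting the \emph{minimality} of $u$. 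In other words, the paper never needs matching blow-up rates for $u$ and $v$; it uses minimality of $u$ as the rigidity mechanism, and the two auxiliary functions $\theta u$ and $w$ are constructed so that their boundary behaviour is automatically sandwiched by $u$ itself. Your sub-/supersolution algebra (the identities for $(1-\theta)v$ and $\theta u$) is correct and is indeed the same convexity computation, but the framing around it --- interior maximum of $(1-\theta)v-u$ versus Perron-between-$\theta u$-and-$w$ plus minimality --- is different, and only the latter closes without the sharp lower boundary asymptotics.
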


\begin{proof} By assumption on $v$ and the behavior of $u$ the exists a $\alpha \in (0,1)$ small enough such that $2 \alpha v\leq u$ in $\Omega$. Define
$$
w=u-\alpha(v-u).
$$

By the minimality of $u$, we know $u\leq v$ in $\Omega$ from which $w\leq u$ in $\Omega$. 

We claim that $w$ is a super solution to~\eqref{eq}. In fact, since $w=(1+\alpha)u-\alpha v$, using the convexity of $|\cdot|^m$ we have
$$
(1 + \alpha)|Du(x)|^m - \alpha|Dv|^m \leq |Dw|^m,
$$
from which we have
$$
\lambda w + \Ls w + |Dw|^m \geq (1 + \alpha) f - \alpha f = f,
$$
and $w$ is a supersolution for the problem. This is a formal proof that it is going to be justified by viscosity arguments at the end of the proof. Thus, accepting the claim


Now, taking $\theta \in (0, \alpha)$ we have $\theta u \leq w$ in $\Omega$. It is direct to see that $\theta u$ is a subsolution for the problem since
%
%
$$
\lambda (\theta u) + \Ls (\theta u) + |D(\theta u)|^m = |D(\theta u)|^m (1 - \theta^{1-m}) + \theta f 
\leq f\quad \mbox{in} \ \Omega,
$$
and the above computation can be justified in the viscosity sense.

Then, by Perron's method (see Lemma~\ref{perron-alternativo}) there exists $z$ solution to ~\eqref{eq}  such that $\theta u \leq z \leq w \leq u$. Then, if $v(x_0)>u(x_0)$ at some point $x_0\in \Omega$, then $z(x_0)\leq w(x_0)<u(x_0)$, which contradicts the minimality of $u$. This concludes the statement of the lemma.

\smallskip

Now we justify the claim on $w$. This seems to be part of the folclore of viscosity solutions theory but we provide the details next. Let $\varphi$ be a test function for $w$ at $\bar x_0 \in \Omega$, that is, $w(x_0) = \varphi(x_0)$ and $w \geq \varphi$ in $B_r(x_0)$ for some $r > 0$. We can assume $x_0$ is a strict minima for $w - \varphi$ in $B_r(x_0)$ and that $\bar B_{2r}(x_0) \subset \Omega$. Then, for $\epsilon > 0$ we double variables and consider
\begin{align*}
\Phi(x,y) = (1 + \alpha)u(x) - \alpha v(y) - \Big{(} (1 + \alpha) \varphi(x) - \alpha \varphi(x) - \epsilon^{-2}|x - y|^2 \Big{)},
\end{align*}
which attains it minimum at some $(\bar x, \bar y) \in \bar B_r(x_0)^2$. In particular, we have $\Phi(x_0, x_0) \geq \Phi(\bar x, \bar y)$ from which
$$
0 = w(x_0) - \varphi(x_0) \geq (1 + \alpha) (u(\bar x) - \varphi(\bar x)) - \alpha (v(\bar y) - \varphi(\bar y)) + \epsilon^{-2}|\bar x - \bar y|^2,
$$
from which $\epsilon^{-2}|\bar x - \bar y|^2$ remains bounded as $\epsilon \to 0$ (depending on the distance of $B_r(x_0)$ and $\partial \Omega$ since $u$ and $v$ are unbounded). Now, by standard viscosity arguments, we have that
$$
\bar x, \bar y \to x_0; \ u(\bar x) \to u(x_0), v(\bar y) \to v(x_0); \ \epsilon^{-2}|\bar x - \bar y|^2 \to 0,
$$
as $\epsilon \to 0.$ 

Then, from the inequality $\Phi(\bar x, \bar y) \leq \Phi(\bar y, \bar y)$, we see that $\bar x$ is a local minimum point to
$$
x \mapsto u(x) - (\varphi(x) - \frac{1}{\epsilon^2(1 + \alpha)}|x - \bar y|^2),
$$
and similarly, $\bar y$ is a local maximum point to
$$
y \mapsto v(y) - (\varphi(y) + \frac{1}{\epsilon^2 \alpha} |\bar x -y|^2).
$$

Denote 
$$
\varphi_1(x) = \varphi(x) - \frac{1}{\epsilon^2(1 + \alpha)}|x - \bar y|^2; \ \varphi_2(y) = \varphi(y) + \frac{1}{\epsilon^2 \alpha} |\bar x -y|^2; \ \xi = \frac{2}{\epsilon^2}(\bar x - \bar y).
$$

Using the viscosity inequalities for $u$ at $\bar x$ and for $v$ at $\bar y$, for all $\delta \in (0,r)$ we can write
\begin{align*}
\lambda u(\bar x) + \Ls[B_\delta(\bar x)] \varphi_1(\bar x) + \Ls[B_\delta(\bar x)^c] u(\bar x) + |D\phi(\bar x) - \frac{1}{1 + \alpha} \xi|^m \geq f(\bar x), \\
\lambda v(\bar y) + \Ls[B_\delta(\bar y)] \varphi_2(\bar y) + \Ls[B_\delta(\bar y)^c] v(\bar y) + |D\phi(\bar y) - \frac{1}{\alpha} \xi|^m \leq f(\bar y).
\end{align*}

Then, multiplying the first inequality by $1 + \alpha$, the second by $\alpha$ and subtracting the resulting inequalities, we use the convexity of $|\cdot|^m$  as above to arrive at
\begin{align}\label{testingw}
\lambda (w(x_0) - o_\epsilon(1)) + I^\delta + I_\delta + |(1 + \alpha) D\phi(\bar x) - \alpha D\phi(\bar y)|^m \geq (1 + \alpha) f(\bar x) - \alpha f(\bar y),
\end{align}
where $o_\epsilon(1) \to 0$ as $\epsilon \to 0$, and where for $\delta > 0$ we have denoted
\begin{align*}
 I_\delta = (1 + \alpha) \Ls[B_\delta(\bar x)] \varphi_1(\bar x) - \alpha \Ls[B_\delta(\bar y)] \varphi_2(\bar y) \\
 I^\delta = (1 + \alpha) \Ls[B_\delta(\bar x)^c] u(\bar x) - \alpha \Ls[B_\delta(\bar y)^c] v(\bar y).
\end{align*}

Now, using that $\Phi(\bar x + z, \bar y + z) \geq \Phi(\bar x, \bar y)$, for all $\delta \in (0, r)$ we see that
\begin{align*}
I^\delta \geq I^r + \int_{B_r \setminus B_\delta} [(1 + \alpha) \varphi(\bar x + z) - \alpha \varphi(\bar y + z) - ((1 + \alpha)\varphi(\bar x)) - \alpha \varphi(\bar y)] |z|^{-(N + 2s)}dz.
\end{align*}

Thus, since $\varphi_1, \varphi_2$ are smooth functions, we conclude that
\begin{align*}
I_\delta + I^\delta \geq & I^r + \mathrm{PV} \int_{B_r} [(1 + \alpha) \varphi(\bar x + z) - \alpha \varphi(\bar y + z) - ((1 + \alpha)\varphi(\bar x)) - \alpha \varphi(\bar y)] |z|^{-(N + 2s)}dz \\
& - \epsilon^{-2} o_\delta(1),
\end{align*}
where $o_\delta(1) \to 0$ as $\delta \to 0$. 

For such fixed $r \in (0,1)$, we replace this inequality into~\eqref{testingw}, and take limits as $\delta \to 0$ first and $\epsilon \to 0$ later to conclude, by using Dominated Convergence Theorem to control the integral terms, together with the smoothness of $\phi$, that
\begin{equation*}
\lambda w(x_0) + \Ls[B_r(x_0)] \varphi(x_0) + \Ls [B_r^c(x_0)]w(x_0) + |D\varphi(x_0)|^m \geq f(x_0),
\end{equation*}
from which we conclude the claim. The proof is now complete.
\end{proof}

Now we are in a position to provide the 
\begin{proof}[Proof of Theorem~\ref{teo1}]
By Proposition~\ref{propexistence} we have the minimal large solution to~\eqref{eq}. Using Lemmas~\ref{perron} and by the  the uniqueness in the $\gamma$-class given by Lemma ~\ref{lema-Veron}, we conclude that for each $\epsilon \in (0,1)$ we have
\begin{equation*}
C_0 - \epsilon \leq \liminf_{x \to \partial \Omega} \frac{u(x)}{g_{\gamma}(x)} \leq \limsup_{x \to \partial \Omega} \frac{u(x)}{g_{\gamma}(x)} \leq C_0 + \epsilon,
\end{equation*}
from which, taking $\epsilon \to 0$ we conclude~\eqref{rate-frac}. The estimates for the derivatives are a consequence of  Proposition ~\ref{lemaequiLip}.
\end{proof}

\begin{remark}
Although we do not know that the large solution is unique, by stability of viscosity solutions together with the boundary estimates, it is easy to see that this minimal solution converges to the unique large solution to~\eqref{eqLL} as $s \to 1$.
\end{remark}

\subsection{On the regime $s \leq 1/2$.} Motivated by the properties of the minimal solution found in Theorem~\ref{teo1}, we adopt the following terminology: we say that $u \in C(\Omega)$ is a \textsl{minimal large solution} to problem~\eqref{eq} if $u$ solves~\eqref{eq} in the viscosity sense, satisfies $u(x) \to +\infty$ as $d(x) \to 0^+$, and for every large solution $v$ to~\eqref{eq} we have $u \leq v$ in $\Omega$. By definition, minimal large solution is unique. Moreover, it is possible to prove that if $v$ is any large supersolution to~\eqref{eq}, then $u \leq v$ in $\Omega$.

When $s \leq 1/2$, then the growth constraint on the gradient means $m \leq 2s \leq 1$, being the case $s=1/2$ of particular interest since it is the limiting equation of the above studied. In this case, we have the following
\begin{prop}
Let $\Omega$ be a bounded domain with smooth boundary, $\lambda > 0$ and $f \in C_b(\Omega)$. If $s \leq 1/2$ and $m \leq 2s$, there is no minimal large solution for the problem
\begin{equation*}
\lambda u + \Ls u + |Du|^m = f \quad \mbox{in} \ \Omega.
\end{equation*}
\end{prop}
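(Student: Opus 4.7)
I argue by contradiction. Suppose $u \in C(\Omega)$ is a minimal large solution of~\eqref{eq} in the regime $s \le 1/2$, $m \le 2s$. Observe first that this forces $m \le 1$ and $\gamma := (2s-m)/(m-1) \le 0$, so that the boundary profile $g_\gamma$ of Theorem~\ref{teo1} stays bounded on $\overline\Omega$ rather than diverging. Consequently, the asymptotic balance between $\Ls u$ and $|Du|^m$ that pins the rate in Lemmas~\ref{perron} and~\ref{lema-Veron} is unavailable, and I would exploit this structural degeneracy to manufacture a large solution strictly below $u$ at some interior point, contradicting minimality.

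The first step is to note that, by Lemma~\ref{lemabarrera0}, in this regime one has $c(\gamma) \ge 0$ for every $\gamma \in (-1, 2s)$, and $c_s \ge 0$ in the logarithmic case. As a consequence, functions of the form $V_0(x) := -A\log d(x) + B$, with $A, B > 0$ chosen appropriately in terms of $\|f\|_\infty$, $\lambda$, and the geometry of $\Omega$, are strict large supersolutions of~\eqref{eq}: near $\partial\Omega$ each of $\lambda V_0$, $\Ls V_0$, $|DV_0|^m$ has the correct sign, while the interior contribution is absorbed by taking $B$ large. Strict large subsolutions $\underline w$ comparable to $d^{-\beta}$ near $\partial\Omega$ (and a small constant in the interior) are produced symmetrically, using that $\Ls d^{-\beta}$ is large and negative near $\partial\Omega$.

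The crux of the argument is then to build a family $\{\bar V_M\}_{M>0}$ of large supersolutions of~\eqref{eq} satisfying $\bar V_M(x_0) \le -M$ for some fixed interior $x_0$. Once such a family is in hand, Perron's method (Lemma~\ref{perron-alternativo}) sandwiched between $\underline w$ and $\bar V_M$ produces a large viscosity solution $v_M \le \bar V_M$ of~\eqref{eq}. By minimality of $u$ one then has $u \le v_M$ in $\Omega$, and in particular $u(x_0) \le -M$ for every $M > 0$, which is the desired contradiction with the finiteness of $u(x_0)$.

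The main obstacle lies in constructing such $\bar V_M$. A naive candidate $V_0 - M\eta$, with $\eta \in C_c^\infty(\Omega)$ and $\eta(x_0) = 1$, introduces a linear-in-$M$ deficit $-\lambda M\eta - M\Ls\eta$ in the supersolution inequality, whereas the only compensating contribution $|D(V_0-M\eta)|^m - |DV_0|^m$ is of order $M^m$, hence sublinear for $m \le 1$ by subadditivity of $r \mapsto r^m$. To close this gap I would rescale the spatial support of the bump together with $M$, concentrating the gradient perturbation on a length scale tuned so that the gain from $|D\eta_M|^m$ becomes comparable to the linear cost, and verify that $M\Ls\eta_M$ stays uniformly controlled thanks to the mildness of $\Ls$ for $s \le 1/2$. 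This is precisely where the hypotheses $s \le 1/2$ and $m \le 1$ enter essentially, and it constitutes the delicate technical heart of the proof.
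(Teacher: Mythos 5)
Your opening observation is correct and is precisely the lever the paper uses: for $s \le 1/2$ the constant $c_s$ of Lemma~\ref{lemabarrera0}$(ii)$ is nonnegative, so $\Ls(-\log d)$ is nonnegative to leading order near $\partial\Omega$, making $-A\log d + B$ a large supersolution. But the route you then propose --- a family $\bar V_M$ of large supersolutions with $\bar V_M(x_0) \le -M$, obtained by subtracting a bump of height $M$ from such a $V_0$ --- does not close, and the obstruction you flag yourself is the fatal one. On the bump the deficit $-\lambda M\eta - M\Ls\eta$ is of order $M$, while the compensating gain $|D(V_0 - M\eta)|^m - |DV_0|^m$ is of order $M^m$, sublinear since $m \le 2s \le 1$. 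Your proposed rescue --- tuning the bump's length scale $r = r(M)$ --- goes the wrong way: the gradient gain scales like $(M/r)^m$ and the nonlocal cost on the bump like $Mr^{-2s}$, so balancing forces $r \gtrsim M^{(1-m)/(2s-m)}$, i.e.\ the bump must \emph{spread} as $M\to\infty$ (eventually leaving $\Omega$), not concentrate; and even ignoring $\Ls$, the zeroth-order deficit $\lambda M$ at $x_0$ is never absorbed by $(M/r)^m$ unless $r\to 0$, which the nonlocal cost forbids. The ``delicate technical heart'' you identify is not a detail to be filled in later; it is the place the argument fails.

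The paper's proof sidesteps the issue entirely by not trying to push a supersolution below a constant. Set $z_{k,M}(x) = \lambda^{-1}M - k\log d(x)$. Because $c_s \ge 0$ when $s \le 1/2$, Lemma~\ref{lemabarrera0}$(ii)$ gives $\lambda z_{k,M} + \Ls z_{k,M} + |D z_{k,M}|^m \ge M$ for $d(x)<\delta_0$ (every boundary blow-up term has the favorable sign), while in the interior the nonlocal contribution of $-k\log d$ is $O(k)$. So a single $M$, fixed in terms of $f$, $\lambda$, $\delta_0$ and the data, makes $z_{k,M}$ a large supersolution for \emph{every} $k \in (0,1)$. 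A minimal large solution $u$ then satisfies $u \le z_{k,M}$ for all $k$, and letting $k\to 0^+$ gives $u \le \lambda^{-1}M$ in $\Omega$, contradicting~\eqref{blow-up}. The step you missed is that one should weaken the blow-up rate of the comparison supersolution rather than depress its interior value.
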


\begin{proof}
For $k, M > 0$, we consider the function $z = z_{k, M}$ given by
$$
z(x) = \lambda^{-1} M - k \log(d(x)), \quad x \in \Omega,
$$
where we have extended $d$ as a smooth, positive function to the whole of $\Omega$. From Lemma~\ref{lemabarrera0}, for all $x$ with $d(x) \leq \delta_0$ we have that
\begin{equation*}
\lambda z(x) + \Ls z(x) + |Dz(x)|^m \geq M + C(k d^{-2s}(x) + k^m d^{-m}(x)) \geq M,
\end{equation*}
with $C > 0$. For $d(x) > \delta_0$, by the smoothness of $d$ and the integrability of $\log(d(x))$ we have the existence of $C_{\delta_0}$ such that
\begin{equation*}
\lambda z(x) + \Ls z(x) + |Dz(x)|^m \geq M - k C_{\delta_0}.
\end{equation*}

Thus, we fix $M > 0$ large enough in terms of $\delta_0, f$ and the data in order to get $z$ is a blow-up supersolution to~\eqref{eq} for all $k \in (0,1)$. Hence, if there is a minimal solution to~\eqref{eq}, we have $u \leq z_k$, and taking $k \to 0^+$ we conclude $u$ is bounded above by $M$, which is a contradiction. 
\end{proof}

%
%
%
%
%

\section{The ergodic problem}
\label{secerg}

From now on, we will assume the hypotheses of Theorem~\ref{teo2} are in force. 
The main result of this section is the following
\begin{teo}\label{teoergodic}
There exists $(u, c) \in C(\Omega) \times \R$ solving the problem~\eqref{eq-erg-frac}-\eqref{blow-up}. Moreover, we have that $u \in C^{1, \alpha}(\Omega)$ and it is in the $\gamma$-class~\eqref{gamma-class}.
\end{teo}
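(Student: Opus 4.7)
\medskip

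The proof will proceed by the \emph{vanishing discount method}. Fix a reference point $x_0 \in \Omega$. For each $\lambda \in (0,1)$ let $u_\lambda \in C^{1,\alpha}(\Omega)$ be the minimal large solution to~\eqref{eq} granted by Theorem~\ref{teo1}, and set
\[
v_\lambda(x) := u_\lambda(x) - u_\lambda(x_0), \qquad c_\lambda := \lambda u_\lambda(x_0),
\]
so that $(v_\lambda, c_\lambda)$ satisfies $\Ls v_\lambda + |Dv_\lambda|^m + \lambda v_\lambda = f - c_\lambda$ in $\Omega$. The two-sided bound on $u_\lambda(x_0)$ from Proposition~\ref{propexistence} yields $|c_\lambda| \leq C$ uniformly in $\lambda$. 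In parallel, I need the gradient estimate $|Du_\lambda(x)| \leq C d(x)^{-\gamma-1}$ from Proposition~\ref{lemaequiLip} with a constant independent of $\lambda$; this requires checking that the rescaling argument used there passes $\lambda$-uniformly (the discount term becomes $\lambda \rho^{2s} v$ on the rescaled picture and disappears in the $\lambda\to 0$ regime). Granted this, $\{v_\lambda\}$ is locally uniformly Lipschitz, and the normalization $v_\lambda(x_0)=0$ gives local uniform boundedness on each compact $K \Subset \Omega$.

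By Arzelà--Ascoli and a diagonal argument I extract a subsequence $\lambda_n \downarrow 0$ along which $v_{\lambda_n} \to v$ locally uniformly and $c_{\lambda_n} \to c$. Since $\lambda_n v_{\lambda_n} \to 0$ locally uniformly, the stability of viscosity solutions shows that $(v,c) \in C(\Omega) \times \R$ is a viscosity solution to $\Ls v + |Dv|^m = f - c$ in $\Omega$. An iterated application of the interior regularity (using the local H\"older regularity of $f$ and the already-established interior Lipschitz bound for $v$) upgrades $v$ to $C^{1,\alpha}(\Omega)$, and in fact to $C^{2s+\alpha}(\Omega)$ via~\cite[Theorem 1.1]{S15} as used in Proposition~\ref{lemaequiLip}.

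The final step --- boundary blow-up and the $\gamma$-class property --- is the main obstacle. The plan is to build, for each $\epsilon > 0$, strict sub- and supersolutions of the ergodic equation of the form
\[
\underline V_\epsilon(x) := (C_0 - \epsilon)\, g_\gamma(x) - M, \qquad \overline V_\epsilon(x) := (C_0 + \epsilon)\, g_\gamma(x) + M,
\]
on a boundary shell $\{d(x) < \delta_\epsilon\}$. By the computations of Lemma~\ref{lemabarrera0} together with the convexity of the profile in~\eqref{defc0} and the defining relation for $C_0$, these are strict sub/supersolutions of $\Ls u + |Du|^m = f - c$ for $\delta_\epsilon>0$ small enough depending only on $\epsilon, c$. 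Choosing $M = M(\epsilon)$ large, continuity of $v$ on the compact inner sphere $\{d(x) = \delta_\epsilon\}$ arranges $\underline V_\epsilon \leq v - 1$ and $\overline V_\epsilon \geq v + 1$ there. A viscosity comparison on the shell --- performed in the spirit of Lemma~\ref{perron-alternativo}, where strictness of the barriers rules out interior extrema via the standard test-function touching argument --- should yield $\underline V_\epsilon \leq v \leq \overline V_\epsilon$ throughout, giving both $v(x) \to +\infty$ as $x \to \partial\Omega$ and
\[
C_0 - \epsilon \;\leq\; \liminf_{d(x)\to 0^+} \frac{v(x)}{g_\gamma(x)} \;\leq\; \limsup_{d(x)\to 0^+} \frac{v(x)}{g_\gamma(x)} \;\leq\; C_0 + \epsilon;
\]
letting $\epsilon \downarrow 0$ places $v$ in the $\gamma$-class with leading coefficient $C_0$. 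The delicate point in this comparison is that both $v$ and the barriers are a priori unbounded at $\partial\Omega$, so one cannot invoke a standard Dirichlet comparison principle directly. The fix is the Perron-type argument: either compare on annuli $\{\delta' < d < \delta_\epsilon\}$ and exploit the strict subsolution/supersolution property to send $\delta' \downarrow 0$, or introduce a small $\eta g_\gamma$ perturbation that forces the relevant extrema away from $\partial\Omega$. This technical step, together with ensuring the $\lambda$-uniformity of the interior Lipschitz bound noted above, is the crux of the argument.
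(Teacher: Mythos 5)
Your overall strategy (vanishing discount with the normalization $v_\lambda = u_\lambda - u_\lambda(x_0)$, $c_\lambda = \lambda u_\lambda(x_0)$, followed by Arzel\`a--Ascoli and stability) matches the paper. The part that does not go through as you describe is the compactness step.

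You propose to obtain local uniform boundedness of $\{v_\lambda\}$ from a $\lambda$-uniform gradient bound $|Du_\lambda(x)|\le C\,d(x)^{-\gamma-1}$, and you flag only the discount term $\lambda\rho^{2s}v$ as the thing to check. That is not the real obstruction. The Lipschitz estimate in Proposition~\ref{lemaequiLip} is proved by rescaling and then invoking Lemma~\ref{lemaLipbounded}, whose constant depends on the $L^\infty$ norm of the rescaled function; that is exactly why the hypothesis $u\le C_0\,d^{-\gamma}$ appears in Proposition~\ref{lemaequiLip}. For $u_\lambda$ you only have the bound $u_\lambda \le c_2 g_\gamma + C\lambda^{-1}$ from Proposition~\ref{propexistence}, so the constant $C_0$ you would need there is of order $\lambda^{-1}$ and the resulting gradient bound is \emph{not} $\lambda$-uniform. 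Attempting instead to apply Proposition~\ref{lemaequiLip} to $w_\lambda=u_\lambda-u_\lambda(x_0)$ requires a $\lambda$-uniform bound $w_\lambda\le C_0\,d^{-\gamma}$, which is precisely the content of the uniform local boundedness you were trying to deduce. So your argument is circular.

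The paper breaks this circle with Lemma~\ref{boundw}. The two-sided bound there reduces everything to proving $\sup_{\lambda\in(0,1)}\max_{\bar\Omega_\delta}|w_\lambda|<\infty$ (display~\eqref{unifwlambda}), and this is established by a genuine blow-up argument: set $t_n=\max_{\bar\Omega_\delta}|w_{\lambda_n}|\to\infty$, normalize $v_n=w_{\lambda_n}/t_n$, and observe that $v_n$ solves an equation whose gradient term carries the coefficient $T=t_n^{m-1}\to\infty$. To extract a locally uniform limit one then needs the equicontinuity estimate that is \emph{independent of the coefficient $T$}, which is exactly item $(2)$ of Lemma~\ref{lemaLipbounded} (the $C^{0,1/2}$ bound not depending on $T$). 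In the limit the gradient must vanish, so $v$ is constant, contradicting $v(x_0)=0$ and $\max_{K_\delta}|v|=1$. This device --- the uniform-in-$T$ regularity estimate and the ensuing contradiction --- is the heart of the compactness and is entirely absent from your proposal; without it, the claimed uniformity has no basis.

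Once that is fixed, the rest of your outline is sound: the Arzel\`a--Ascoli extraction, the stability argument, and the interior bootstrap to $C^{1,\alpha}$ (and $C^{2s+\alpha}$ via~\cite{S15}) are correct and match the paper. Your barrier construction $(C_0\pm\epsilon)g_\gamma\pm M$ for the $\gamma$-class, with a Perron/annular-shell comparison that exploits strictness, is a reasonable route to the blow-up rate --- the paper itself is terse on this point, deriving the rate by rewriting the ergodic equation with a harmless discount and invoking Theorem~\ref{teo1} --- but be aware that in either version the comparison between two functions that both blow up at $\partial\Omega$ does require the care you anticipate (it is not a standard Dirichlet comparison).
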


In order to obtain the above result, we follow the nowadays standard vanishing discount method mentioned in the Introduction, namely, take a suitable limit as $\lambda \to 0$ in~\eqref{eq}. 

We recall that $u_\lambda$ found in Theorem~\ref{teo1} satisfies
\begin{equation*}
0 \leq u_\lambda \leq \frac{C}{\lambda} +  (C_0+\varepsilon) g_{\gamma}\quad \mbox{in} \ \Omega.
\end{equation*}

Fix $x_0 \in \Omega$ and define 
\begin{equation}\label{defwlambda}
w_\lambda(x) = u_\lambda(x) - u_\lambda(x_0), \quad x \in \Omega,
\end{equation}
which satisfies the equation
\begin{equation}\label{eqvanishing}
\lambda w_\lambda - \Delta_\Omega w_\lambda + |Dw_\lambda|^m - f + \lambda u_\lambda(x_0) = 0 \quad \mbox{in} \ \Omega.
\end{equation}

Now we give some bounds for $w_\lambda$, here we use some ideas of \cite{BT}.

\begin{lema} \label{boundw}
Let $w_\lambda$ be as in~\eqref{defwlambda}. Then, there exists $C > 0$ and $0<\delta \leq \delta_0 /2$ such that, for all $\lambda \in (0,1)$ we have
\begin{equation*}
-\max_{\bar \Omega_\delta}|w_\lambda|-C\leq  w_\lambda \leq \max_{\bar \Omega_\delta}|w_\lambda|+Cg_\gamma \quad \mbox{in} \ \Omega.
\end{equation*}

Moreover, we have
\begin{equation}\label{unifwlambda}
\sup_{\lambda\in(0,1)}\max_{\bar \Omega_\delta}|w_\lambda|<\infty.
\end{equation}
\end{lema}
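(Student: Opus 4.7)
The plan splits into two pieces: a sandwich bound for $w_\lambda$ via barriers and a minimum-principle argument, and the uniform bound on $M_\lambda := \max_{\bar\Omega_\delta}|w_\lambda|$ by a compactness/rescaling argument. Throughout, denote $\rho_\lambda := \lambda u_\lambda(x_0)$, so that
\begin{equation*}
\lambda w_\lambda + \Ls w_\lambda + |Dw_\lambda|^m = f - \rho_\lambda \quad \mbox{in} \ \Omega,
\end{equation*}
and the two-sided bound in Proposition~\ref{propexistence} forces $|\rho_\lambda| \leq K_0$ uniformly in $\lambda \in (0,1)$. I fix $\delta \in (0, \delta_0/2]$ so small that $f(x) > K_0 + 1$ whenever $d(x) \leq 2\delta$, which is possible thanks to~\eqref{compor-f}.

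For the \textbf{lower bound}, since $w_\lambda \to +\infty$ at $\partial\Omega$, the infimum $m_\lambda := \inf_\Omega w_\lambda$ is attained at some interior $x_m$. Testing the supersolution inequality at $x_m$ against a constant test function (the nonlocal term is non-positive at a minimum, the gradient term vanishes) yields $\lambda m_\lambda \geq f(x_m) - \rho_\lambda$. If $x_m \notin \bar\Omega_\delta$, then $d(x_m) < \delta$ forces $m_\lambda > 1/\lambda > 0$; otherwise $x_m \in \bar\Omega_\delta$ yields $m_\lambda \geq -M_\lambda$. Either way $w_\lambda \geq -M_\lambda - C$ in $\Omega$. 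For the \textbf{upper bound}, I build the barrier $V := M_\lambda + (C_0 + \nu)g_\gamma + B$ with $\nu > 0$ small and $B > 0$ large, both independent of $\lambda$. By Lemma~\ref{lemabarrera0} together with $(\gamma + 1)m = \gamma + 2s$, and the strict monotonicity of $A \mapsto A^m \gamma^m - A c_\gamma$ past the positive root $A = C_0$ of~\eqref{defc0} (with the analogue for $\gamma = 0, m = 2s$ via Lemma~\ref{lemabarrera0}(ii)), $V$ is a strict supersolution of the $w_\lambda$-equation in a neighborhood of $\partial\Omega$; the constant $B$ is then enlarged to ensure the supersolution property on the intermediate annulus inside $\Omega \setminus \bar\Omega_\delta$. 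On $\bar\Omega_\delta$ one has $V \geq M_\lambda \geq w_\lambda$, while $V - w_\lambda \to +\infty$ as $d(x) \to 0^+$ because $C_0 + \nu$ strictly exceeds the boundary blow-up coefficient $C_0$ of $u_\lambda$ from Theorem~\ref{teo1}. The nonlocal comparison principle gives $w_\lambda \leq V$ in $\Omega$, and absorbing $B$ yields $w_\lambda \leq M_\lambda + C g_\gamma$.

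For the \textbf{uniform bound} $\sup_\lambda M_\lambda < \infty$, I argue by contradiction: if $M_{\lambda_n} \to +\infty$ along some sequence $\lambda_n \in (0,1)$, set $\tilde w_n := w_{\lambda_n}/M_{\lambda_n}$, so that $\tilde w_n(x_0) = 0$, $\max_{\bar\Omega_\delta}|\tilde w_n| = 1$, and the sandwich bound makes $\{\tilde w_n\}$ uniformly bounded on compact subsets of $\Omega$. The rescaled equation reads
\begin{equation*}
\lambda_n \tilde w_n + \Ls \tilde w_n + M_{\lambda_n}^{m-1}|D\tilde w_n|^m = (f - \rho_{\lambda_n})/M_{\lambda_n},
\end{equation*}
falling into the framework of Lemma~\ref{lemaLipbounded}(2) with $T = M_{\lambda_n}^{m-1}$: the H\"older estimate there is independent of $T$, hence $\{\tilde w_n\}$ is locally equi-H\"older and Arzel\`a-Ascoli delivers a local-uniform limit $\tilde w \in C(\Omega)$. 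Dividing the equation by $M_{\lambda_n}^{m-1}$ and letting $n \to \infty$, the first two terms and the right-hand side vanish, and viscosity stability forces $|D\tilde w|^m = 0$; thus $\tilde w$ is locally constant, hence constant on the connected set $\Omega$, and the normalization $\tilde w(x_0) = 0$ gives $\tilde w \equiv 0$, contradicting $|\tilde w_n(y_n)| = 1$ at some $y_n \in \bar\Omega_\delta$ whose limit point $y$ would satisfy $|\tilde w(y)| = 1$. The main obstacle is precisely this uniform bound, whose viability rests on Lemma~\ref{lemaLipbounded}(2) yielding a H\"older estimate independent of the gradient coefficient---this is exactly why that lemma was stated in that strong form---and on the viscosity-stability argument in the degenerate nonlocal limit where only $|D\tilde w|^m$ survives.
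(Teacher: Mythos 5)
Your proposal is correct and follows essentially the same strategy as the paper: a sandwich bound obtained by confining the extremal points to $\bar\Omega_\delta$ (exploiting the blow-up of $f$ forced by~\eqref{compor-f}) and a barrier built from $(C_0+\nu)g_\gamma$, followed by the contradiction/rescaling argument using the $T$-independent local H\"older estimate of Lemma~\ref{lemaLipbounded}(2) and viscosity stability to land at $|D\tilde w|=0$. Two minor differences worth noting: for the lower bound you test the equation for $w_\lambda$ directly at its interior global minimum (whose existence follows from the boundary blow-up), rather than locating the argmax of $u_\lambda - W$ with $W=(C_0+\epsilon)g_\gamma$ as the paper does; this is a somewhat cleaner route to the same conclusion. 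And at the very end you assert that $|D\tilde w|=0$ in the viscosity sense forces $\tilde w$ to be locally constant without giving the argument; the paper spells this out with the auxiliary test functions $\phi_\varepsilon(x)=\varepsilon/(\mu^2-|x_1-x|^2)$, and a careful write-up should either include that short step or cite it, since a viscosity eikonal inequality does not immediately reduce to a classical one.
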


\begin{proof} Fix $\epsilon > 0$ and take $W = W_\epsilon = (C_0 + \epsilon) g_\gamma$.
By the boundary behavior of $u_\lambda$, there exists $x_\lambda \in \mbox{argmax}_{\Omega} \{ u_\lambda-W \}$. Recalling that $W$ is smooth, we can use $W$ as test function for $u_\lambda$ as subsolution for~\eqref{eq} at $x_\lambda$, from which
\begin{align*}
0 \geq & \Ls W(x_\lambda)+ |DW(x_\lambda)|^m - f(x_\lambda) + \lambda u_\lambda(x_\lambda) \\
\geq & \Ls W(x_\lambda)+ |DW(x_\lambda)|^m - f(x_\lambda) - \inf \{ f^- \}.
\end{align*}

Then, if $x_\lambda \in \Omega \setminus \Omega_{\delta_0}$, we have
\begin{equation*}
f(x_\lambda) + \inf \{ f^-\} \geq \tilde C_1 d^{\gamma - 2s}(x_\lambda),
\end{equation*}
for some $\tilde C_1 > C_1$. Then, by the boundary behavior of $f$ (see~\eqref{compor-f}), we conclude the existence of $\delta \in (0, \delta_0)$ such that $x_\lambda \in \bar \Omega_\delta$ for all $\lambda \in (0,1)$.
Thus, there exists $C>0$ not depending on $\lambda$ such that
$$
 w_\lambda(x) \geq W(x)-W(x_\lambda)+u_\lambda(x_\lambda)-u_\lambda(x_0) \geq -\max_{\bar \Omega_\delta}|w_\lambda|-C, \quad x \in \Omega.
$$

For the upper bound, we see that for $C > C_0 + \epsilon$, the function $V(x) = \max_{\bar \Omega_\delta} |w_\lambda| + Cg_\gamma, \ x \in \Omega$, is a supersolution to the equation solved by $w_\lambda$ in $\Omega \setminus \bar \Omega_\delta$ for $\delta$ small enough, and clearly $w_\lambda \leq V$ on $\bar \Omega_\delta$. By the boundary behavior of $w_\lambda$ we can apply comparison principle to conclude that $w_\lambda \leq V$ in $\Omega$ and the upper bound follows.

%

\smallskip

Finally, we will prove~\eqref{unifwlambda}. Assume by contradiction that there exists a sequence $\lambda_n \to 0$ 
such that $t_n := \max_{\bar \Omega_\delta} |w_{\lambda_n}|\to \infty$.

Denote 
$$
v_n(x)=\frac{w_{\lambda_n(x)}}{t_n} \quad\mbox{for } \ x\in \Omega.
$$

By the estimates on $w_\lambda$ we have
\begin{equation}\label{first-ineq}
-1 + o_n(1) \leq v_n \leq 1+ \frac{V(x)}{t_n}\quad \mbox{in} \ \Omega,
\end{equation}
where $o_n(1) \to 0$ as $n \to \infty$.

Since $v_n$ satisfies
$$
\lambda_n v_n(x) + \Ls v_n(x) + t_n^{m-1}|Dv_n(x)|^m = t_n^{-1}(f + \lambda_n u_{\lambda_n}(x_0)) \quad \mbox{in} \ \Omega,
$$
by the uniform $C^{0,\frac{1}{2}}_{loc}$ estimates of Lemma \ref{lemaLipbounded} we have the family is locally equicontinuous. Then, up to subsequence $v_n$ converges locally uniformly
to some continuous $v$ and by stability results 
$$|Dv|= 0\quad \mbox{in} \ \Omega,$$
Moreover $v$ satisfies
$$|v|\leq 1 \quad \mbox{in} \ \Omega; \quad v(x_0)=0\quad \mbox{and}\quad \max_{K_\delta}|v|=1  $$
Let prove that v is constant $\Omega$ so we find a contrtadiction. In fact, take any $x_1 \in \Omega$ and $\mu>0$ such that $B(x_1, \mu)\subset \Omega$.
Define in $B(x_1, \mu)$ the function
$$\phi_\varepsilon(x)=\frac{\varepsilon}{(\mu^2-|x_1-x|^2)}$$
Then $v-\phi_\varepsilon$ has a maximum $x_2$ therefore $\phi_\varepsilon$ is a test function at $x_2$ if $x_1\not =x_2$  
then $|D \phi_\varepsilon|\not =0$ a contradiction. So  $x_1=x_2$ and   then
 $v(x_1)-\phi_\varepsilon(x_1)\geq v(x)-\phi_\varepsilon(x)$ in  $B(x_1, \mu)$. Now we let $\varepsilon \to 0$ to find $v(x_1)\geq v(x)$ for all  $x\in B(x_1, \mu)$.
Since  $x_1 \in \Omega$ and $\mu$ and any such that  $B(x_1, \mu)\subset \Omega$ we get that $v$ is constant. This is a contradiction, and the proof is complete.
\end{proof}

Now we are position to prove our main Theorem

\begin{proof}[Proof Theorem \ref{teoergodic}] 
Form Lemma  \ref{boundw} we have $w_\lambda$ is locally bounded. By Lemma~\ref{lemaequiLip} we have $w_\lambda$ is locally equicontinuous so there exist a subsequence $\lambda_k\to 0, c \in \R$ and $u\in C(\Omega)$ such that  
\begin{equation}\label{uk}
\left \{ \begin{array}{rl} \lambda_k u_{\lambda_k}(x_0) & \to c, \\
w_{\lambda_k} & \to u \ \mbox{in $L^\infty_{loc}(\Omega)$,}
\end{array} \right .
\end{equation}
ans $k \to \infty$. By stability of the viscosity solutions, the pair $(u,c)$ solves \eqref{eq-erg-frac}.
\end{proof}

\section{Qualitative properties of the ergodic problem}
\label{secqual}

From here, $(u,c)$ is the solution found in Theorem~\ref{teoergodic}. Notice that $(u+C, c)$ with $C$ a constant is also a solution to~\eqref{eq-erg-frac}, but in the analysis below this arbitrary constant plays no role.
\subsection{Characterization of the ergodic constant.}
Assume $f \geq 0$. For each $R, \lambda > 0$, there exists unique function $u_{\lambda, R}$ solving~\eqref{R}, that is
\begin{equation} \label{RR}
\begin{cases}
\lambda u_{\lambda, R} + \Ls u_{\lambda, R} + |Du_{\lambda, R}|^m  = \min \{ f, R \} \quad \text{in }\Omega\\
u_{\lambda, R} =R\quad \text{in }\partial\Omega
\end{cases}
\end{equation}
and as we saw in the proof of Proposition~\ref{propexistence}, $u_{\lambda, R} \nearrow u_\lambda$ as $R \nearrow +\infty$ locally uniformly in $\Omega$, where $u_\lambda$ given in Theorem~\ref{teo1}.


%

In this section we will prove the following theorem
\begin{prop}\label{char}
Let $(u,c)$ be a solution of \eqref{eq-erg-frac}, then 
\begin{align*}
c=\inf\{\rho: \exists u \in C(\Omega)\cap L^1_\omega(\Omega) \text{ satisfying }\eqref{blow-up} \text{ and } \Ls u + |Du|^m - f+\rho \geq 0 \}
\end{align*}
\end{prop}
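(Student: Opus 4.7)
Denote by $c^*$ the infimum on the right-hand side of the claimed identity. The easy direction is $c^* \leq c$: the pair $(u,c)$ from Theorem~\ref{teoergodic} satisfies $\Ls u + |Du|^m = f - c$ in the viscosity sense and $u$ satisfies~\eqref{blow-up}, so $\rho = c$ is admissible in the defining set of $c^*$.

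For the reverse inequality $c \leq c^*$, the plan is to run a one-sided vanishing discount comparison against the discounted minimal solutions $u_\lambda$ of Theorem~\ref{teo1}. Pick any admissible $\rho$, that is, $v \in C(\Omega)$ satisfying~\eqref{blow-up} and $\Ls v + |Dv|^m \geq f - \rho$ in the viscosity sense. Since $v$ is continuous in $\Omega$ and blows up at $\partial \Omega$, the infimum $m_v := \inf_\Omega v$ is finite. For each $\lambda \in (0,1)$ set $K_\lambda = \rho/\lambda + |m_v| + 1$ and $V_\lambda := v + K_\lambda$. A direct computation shows
\begin{equation*}
\lambda V_\lambda + \Ls V_\lambda + |DV_\lambda|^m = \lambda v + \lambda K_\lambda + \Ls v + |Dv|^m \geq \lambda v + \lambda K_\lambda + f - \rho \geq f
\end{equation*}
in the viscosity sense, so $V_\lambda$ is a supersolution to~\eqref{eq} that still satisfies~\eqref{blow-up}.

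The next step is to upgrade minimality of $u_\lambda$ from solutions to supersolutions: $u_\lambda \leq V_\lambda$ in $\Omega$. For this, revisit the approximants $u_{\lambda,R}$ from~\eqref{RR} which satisfy $u_{\lambda,R} = R$ on $\partial \Omega$ and $u_{\lambda,R} \nearrow u_\lambda$ locally uniformly. Since $V_\lambda \to +\infty$ at $\partial \Omega$ and $u_{\lambda,R}$ is continuous up to $\partial \Omega$, we have $V_\lambda \geq u_{\lambda,R}$ near $\partial \Omega$ for each fixed $R$, and $V_\lambda$ is a supersolution of a problem whose right-hand side $f$ dominates $\min\{f,R\}$. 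Proper ellipticity (due to the $\lambda > 0$ term) and standard viscosity comparison on subdomains $\Omega_\delta$ with $\delta \searrow 0$, combined with the blow-up of $V_\lambda - u_{\lambda,R}$ at $\partial \Omega$, give $V_\lambda \geq u_{\lambda,R}$ in $\Omega$. Letting $R \to \infty$ yields $u_\lambda \leq V_\lambda$.

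Evaluating at the reference point $x_0$ from~\eqref{defwlambda} and multiplying by $\lambda$, we obtain $\lambda u_\lambda(x_0) \leq \lambda v(x_0) + \rho + \lambda|m_v| + \lambda$. Passing to the limit along the subsequence $\lambda_k \to 0$ from~\eqref{uk}, the left-hand side converges to $c$ while all terms on the right except $\rho$ vanish, yielding $c \leq \rho$. Taking infimum over admissible $\rho$ gives $c \leq c^*$, which closes the argument.

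The main obstacle I anticipate is the supersolution comparison step $u_\lambda \leq V_\lambda$: one must compare the \emph{large} supersolution $V_\lambda$ with the large solution $u_\lambda$ itself, and the natural route through the truncated problems~\eqref{RR} hinges on a proper-equation comparison principle for the nonlocal operator $\Ls$ on shrinking subdomains with boundary data dictated by the blow-up behavior. All the required ingredients (properness from $\lambda>0$, local regularity, and the monotone approximation $u_{\lambda,R}\nearrow u_\lambda$) are already available from Section~\ref{sectec} and Proposition~\ref{propexistence}.
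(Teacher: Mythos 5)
Your proof is correct and follows essentially the same route as the paper's: compare a shifted version of the witness supersolution against the discounted solutions $u_\lambda$ via the truncated Dirichlet problems~\eqref{RR}, then send $\lambda \to 0$ along the vanishing-discount subsequence~\eqref{uk}. The only cosmetic difference is that you argue directly (absorbing the penalty $\rho/\lambda$ into the constant $K_\lambda$ added to the witness), whereas the paper argues by contradiction (applying the shift $-u_{\lambda_k,R}(x_0)+M$ to the approximant and letting the auxiliary constant $M$ diverge); the comparison mechanism and the limit passage are identical.
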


The proof of Proposition~\eqref{char} follows the steps done in \cite{BT}. 
\begin{proof}
Let us denote 
\[
c^*=\inf\{\rho: \exists u \in C(\Omega)\cap L^1_\omega(\Omega) \text{ satisfying }\eqref{blow-up} \text{ and } \Ls u + |Du|^m - f +\rho\geq 0 \}
\]
and note that by definition $c^*\leq c$. 

Suppose now by contradiction that $c^*<c$. In this case, by the definition of infimum there is $\rho\in(c^*,c)$ and a function $w\in C(\Omega)\cap L^1_\omega(\Omega)$ solving
\[
\Ls w + |Dw|^m - f +\rho\geq 0\quad \text{in }\Omega, 
\]
satisfying the blow-up condition \eqref{blow-up}, furthermore we can assume $w\geq0$. Now let $(u,c)$ be a solution of \eqref{eq-erg-frac} and denote $u_k = u_{\lambda_k}, w_k = w_{\lambda_k}$ 
and $\lambda_k \to 0$ as in~\eqref{uk}.

Given $M>0$, there is $k_0\in\N$ such that 
\begin{align*}\label{rho}
\rho<\lambda_k u_{k}(x_0)-\lambda_k M, 
\end{align*}
for all $k\geq k_0$. Furthermore, as we mentioned in the beggining of this section, for each $k$ we have
\[
u_{\lambda_k, R}\to u_{k}\quad \text{as }R\to\infty
\]
in compact subsets $\Omega$. Then, there exists $R_k > 0$ such that, for all $R \geq R_k$ we have
\[
\rho<\lambda_k u_{\lambda_k, R}(x_0)-\lambda_k M.
\]

For such $R$, denote $U_R(x)=u_{\lambda_k, R}(x)-u_{\lambda_k, R}(x_0)+M$, and observe that 
\begin{equation}\label{eqU}
\lambda_k U_R + \Ls U_R +|DU_R|^m - f =-\lambda_k u_{\lambda_k, R}(x_0)+\lambda_k M\quad\text{in }\Omega,
\end{equation}
with $U_R =R-u_{\lambda_k, R}(x_0)+M\leq R+M$ on $\partial\Omega$.
Now, from the above estimates we see that $w$ satisfies
\begin{align*}
\lambda_k w + \Ls w +|Dw|^m - f& \geq - \rho + \lambda_k w \\ & \geq -\lambda_k u_{\lambda_k, R}(0)+\lambda_k M,
\end{align*}
that is $w$ is a super-solution for~\eqref{eqU}. By comparison principle, we deduce then $w\geq U_R$ in $\Omega$. By letting $R\to \infty$ and then $k\to\infty $ ($\lambda_k\to 0$), we conclude that
\[
w\geq u+M\quad\text{in }\Omega.
\]

The previous inequality holds for any arbitrary $M>0$, which would imply that $w$ is unbounded in $\Omega$, which is a contradiction.
\end{proof}


\subsection{On uniqueness.} In what follows we present some partial uniqueness results for the ergodic problem. The first results establishes that the pair $(u,c)$ found in Theorem~\ref{teoergodic} is such that $u$ is ``minimal" (up to a constant).

\begin{prop}\label{propuniqueness}
Let $(u, c)$ be a solution to~\eqref{eq-erg-frac} given in Theorem~\ref{teoergodic}. Then, for every $(v,c') \in C(\Omega) \times \R$ solution to~\eqref{eq-erg-frac}-~\eqref{blow-up},
there exists $M > 0$ such that $u - M \leq v$ in $\Omega$.
\end{prop}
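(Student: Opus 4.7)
The natural plan is to exploit the minimality of the discounted solutions $u_\lambda$ from Theorem~\ref{teo1}, and then pass to the vanishing-discount limit along the subsequence $\lambda_k\to 0^+$ used to construct $(u,c)$ in Theorem~\ref{teoergodic}. First, since $v$ is a blow-up viscosity supersolution of $\Ls w + |Dw|^m \geq f-c'$, the characterization in Proposition~\ref{char} immediately yields $c\leq c'$.

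Next, set $K := \|v^-\|_{L^\infty(\Omega)}$, which is finite because $v\in C(\Omega)$ blows up at $\partial\Omega$. For every $\lambda>0$, the function $\Psi_\lambda := v+K+c'/\lambda$ is a blow-up viscosity supersolution of the discounted equation~\eqref{eq}: using $\Ls v+|Dv|^m=f-c'$ and $v+K\geq 0$,
\begin{equation*}
\lambda\Psi_\lambda+\Ls\Psi_\lambda+|D\Psi_\lambda|^m=\lambda(v+K)+c'+(f-c')=f+\lambda(v+K)\geq f.
\end{equation*}
By the minimality of the large solution $u_\lambda$ (Theorem~\ref{teo1}, whose Perron-type proof in Proposition~\ref{propexistence} extends to blow-up supersolutions), one obtains $u_\lambda(x)\leq v(x)+K+c'/\lambda$ in $\Omega$. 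Subtracting $u_{\lambda_k}(x_0)$ along the subsequence then yields
\begin{equation*}
w_{\lambda_k}(x)\leq v(x)+K+A_k, \qquad A_k := \frac{c'-\lambda_k u_{\lambda_k}(x_0)}{\lambda_k},
\end{equation*}
with $w_{\lambda_k}\to u$ locally uniformly and $\lambda_k u_{\lambda_k}(x_0)\to c$.

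The hard part is to upper-bound $A_k$ uniformly in $k$: evaluating at $x=x_0$ gives only the lower bound $A_k\geq -v(x_0)-K$, while a priori $A_k$ could diverge to $+\infty$ when $c'>c$. To overcome this, I would combine the sharp boundary profile $u\sim C_0 g_\gamma$ from Theorem~\ref{teo1} with a matching lower bound $\liminf_{d(x)\to 0^+}v(x)/g_\gamma(x)\geq C_0$ for \emph{any} blow-up solution $v$ of~\eqref{eq-erg-frac}, obtained by adapting the barrier arguments of Lemmas~\ref{lemabarrera0} and~\ref{lemabarrera1} to the ergodic setting (the bounded term $-c'$ on the right-hand side does not affect the leading-order balance at $\partial\Omega$). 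This would ensure $u-v$ is bounded above as $x\to\partial\Omega$, and by the continuity of $u-v$ on compact subsets of $\Omega$ we would conclude the existence of $M>0$ with $u-M\leq v$ in $\Omega$.
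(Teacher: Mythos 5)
Your steps 1--4 are correct but, as you yourself observe, they do not close: the constant
$A_k=(c'-\lambda_k u_{\lambda_k}(x_0))/\lambda_k$ is uncontrolled as $k\to\infty$ (indeed $A_k\sim (c'-c)/\lambda_k$ when $c'>c$), and even when $c'=c$ the numerator is only $o(1)$, which divided by $\lambda_k$ need not stay bounded. So this route cannot produce the desired $M$.

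The fallback via boundary blow-up rates also has a genuine gap, independent of whether the sharp lower rate for $v$ can be established. (It can, but not by ``adapting the barriers'': the barrier $w_{\mu,R}$ of Lemma~\ref{lemabarrera1} only gives $v\geq \mu c\, d^{-\gamma}-C$ for some small $\mu$, which is strictly weaker than the constant $C_0$; the sharp lower bound $\liminf v/g_\gamma\geq C_0$ instead follows from the minimality of $u_\lambda$ via $u_\lambda\leq v+K+c'/\lambda$.) The real problem is the last inference: knowing $\lim_{d\to 0}u/g_\gamma = C_0$ and $\liminf_{d\to 0}v/g_\gamma\geq C_0$ gives only $u-v = o(g_\gamma)$ near $\partial\Omega$, which does \emph{not} imply that $u-v$ is bounded above. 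For example $u-v$ could behave like $g_\gamma^{1/2}\to\infty$ and still be compatible with both leading-order rates. This is precisely the difficulty the paper has to circumvent, and is the reason uniqueness is only ``partial'' in this setting.

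The paper's actual proof is of a quite different nature and does not attempt to compare blow-up profiles at all. It fixes a compact $K\subset\subset\Omega$, shifts $u$ down by a large constant so that $u-C+M+2<v$ on $K$, and argues by contradiction at a maximum point $\bar x_k$ of $(\tilde w_k-C)-v$, where $\tilde w_k$ is a bounded $C^{2s+\alpha}$ approximation of $u$ built from the truncated discounted solutions $u_{\lambda_k,R}$. When $\tilde w_k$ is used as a test function for $v$ at $\bar x_k$, the nonlocal integral over $K$ sees a gap of size $\geq M$ between $v$ and $\tilde w_k$, producing a term $\leq -M\,C_{N,s}|K|\,\mathrm{diam}(\Omega)^{-(N+2s)}$, and since all other contributions are bounded by $|c|+|c'|+O(1)$, choosing $M$ large enough yields a contradiction. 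This exploits the nonlocal maximum principle---a uniform gap on a fixed set of positive measure is incompatible with touching from above---and entirely sidesteps boundary asymptotics. You should replace the last paragraph of your argument by a reasoning of this kind.
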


\begin{proof}
Denote $u_k = u_{\lambda_k}$ and $w_k = w_{\lambda_k}$ as in~\eqref{uk}. 
Notice that $u(x_0) = 0$ by construction, and since we assume $f$ is H\"older, $\{ w_k \}_k$ is uniformly $C^{2s + \alpha}_{loc}(\Omega)$.


Fix $K \subset \subset \Omega$ with positive measure. Since $v$ is bounded from below, for each $M > 0$, there exists $C = C(M, u, v, K) > 0$ such that, the function $\tilde u = u -C$ satisfies 
$$
\tilde u + M + 2 < v \quad \mbox{in} \ K.
$$

From this point, we take $M$ large enough in order the inequality
\begin{align}\label{choiceM}
M C_{N, s} |K| \mathrm{dist}(K, \partial \Omega)^{-(N + 2s)} > |c'| + |c| + 1
\end{align}
holds. We claim that for this $M$, we conclude that $\tilde u \leq v$, from which the result follows.

By contradiction, we assume there exists $\tilde x \in \Omega \setminus K$ such that $\tilde u(\tilde x) > v(\tilde x)$.  We consider $\tilde K \subset \subset \Omega$ such that $\{ \tilde x \} \cup K \subset \subset \tilde K$. For such a set $\tilde K$, there exists $R_k$ large enough such that the function $\tilde w_k(x) = u_{\lambda_k, R}(x) - u_{\lambda_k, R}(x_0)$ satisfies
$$
\| \tilde w_k - u \|_{L^\infty(\tilde K)} \leq k^{-1},
$$
for all $R \geq R_k$. Here, $u_{\lambda, R}$ solves~\eqref{RR}.

Then, for all $k$ large enough we have that
$$
(\tilde w_k - C) + M + 1 \leq v \quad \mbox{in} \ K,
$$
and there exists $\tilde x_k \in \Omega \setminus K$ such that $\tilde w_k(\tilde x_k) - C > v(\tilde x_k)$. Since $\tilde w_k$ is bounded in $\Omega$ and $v$ blows-up on the boundary, we have the existence of $\bar x_k \in \Omega \setminus K$ such that
$$
(\tilde w_k(\bar x_k) - C) - v(\bar x_k) = \max_{\Omega} \{ (\tilde w_k - C) - v \} > 0.
$$

Then, we can use $\tilde w_k \in C^{2s + \alpha}_{loc}(\Omega)$ as a test function for $v$ at $\bar x_k$, from which, for all $\delta > 0$ small enough such that $B_\delta(\bar x_k) \subset \Omega$ we have
\begin{align*}
0 \leq (-\Delta)_\Omega^s[B_\delta(\bar x_k)] \tilde w_k(\bar x_k) + (-\Delta)_\Omega^s[B_\delta(\bar x_k)^c] v(\bar x_k) + |D \tilde w_k(\bar x_k)|^m - f(\bar x_k) + c'.
\end{align*}

Taking $\delta$ small we can assume $B_\delta(\bar x_k) \subset \Omega \setminus K$, and from this we see that
\begin{align*}
(-\Delta)_\Omega^s[B_\delta(\bar x_k)^c] v(\bar x_k) = & -C_{N,s} \int_{K} \frac{v(z) - v(\bar x_k)}{|\bar x_k - z|^{N + 2s}}dz -C_{N, s} \int_{(\Omega \setminus K) \setminus B_\delta(\bar x_k)} \frac{v(z) - v(\bar x_k)}{|\bar x_k - z|^{N + 2s}}dz \\
\leq & -C_{N,s} \int_{K} \frac{\tilde w_k(z) + M + 1 - \tilde w_k(\bar x_k)}{|\bar x_k - z|^{N + 2s}}dz \\
&  -C_{N, s} \int_{(\Omega \setminus K) \setminus B_\delta(\bar x_k)} \frac{\tilde w_k(z) - \tilde w_k(\bar x_k)}{|\bar x_k - z|^{N + 2s}}dz.
\end{align*}

Using this into the viscosity inequality for $v$ at $\bar x_k$ together with the equation satisfied by $\tilde w_k$ we arrive at
\begin{align*}
0 \leq & -C_{N, s} (M + 1) \int_{K}\frac{dz}{|\bar x_k - z|^{N + 2s}} + (-\Delta)^{s} \tilde w_k(\bar x_k) + |D \tilde w_k(\bar x_k)|^m - f(\bar x_k) + c' \\
\leq & -M C_{N,s} |K| \mathrm{dist}(K, \partial \Omega)^{-(N + 2s)} + c' - \lambda_k u_{\lambda_k, R}(\bar x_k) - \lambda_k u_{\lambda_k, R}(x_0) \\
\leq & -M C_{N,s} |K| \mathrm{dist}(K, \partial \Omega)^{-(N + 2s)} + c' - \lambda_k u_{\lambda_k, R}(x_0),
\end{align*}
since we can assume that $u_{\lambda, R} \geq 0$ for all $\lambda$ and $R$. Now, since $u_{\lambda, R} \nearrow u_\lambda$ locally uniform in $\Omega$ as $R \nearrow +\infty$, by takin this limit we get
$$
0 \leq  -M C_{N,s} |K| \mathrm{dist}(K, \partial \Omega)^{-(N + 2s)} + c' - \lambda_k u_{\lambda_k}(x_0),
$$
and by taking $k$ large enough we arrive at
\begin{equation*}
M C_{N,s} |K| \mathrm{dist}(K, \partial \Omega)^{-(N + 2s)} \leq c' + c + 1/2,
\end{equation*}
which is a contradiction with the choice of $M$ in~\eqref{choiceM}. This concludes the proof.
\end{proof}

\begin{lema}\label{lemma1} 
Let $(u, c)$ be the ergodic pair found in Theorem~\ref{teoergodic}. Then, for every  supersolution $(v,c)$ to the ergodic problem~\eqref{eq-erg-frac}-\eqref{blow-up}, there exists $C \in \R$ such that $u = v + C$.
\end{lema}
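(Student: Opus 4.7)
The plan is to upgrade the one-sided minimality of Proposition~\ref{propuniqueness} to equality (up to a constant) via a nonlocal minimum principle argument, exploiting the interior regularity $u \in C^{2s+\alpha}(\Omega)$ from Theorem~\ref{teoergodic}. First I would observe that the proof of Proposition~\ref{propuniqueness} uses only the viscosity supersolution property of $v$ together with its boundary blow-up, so it applies verbatim in the present setting, yielding $M > 0$ such that $u - M \leq v$ in $\Omega$. Setting $t^\ast := \sup_\Omega(u - v) \leq M$ and $\tilde v := v + t^\ast$, one obtains $\tilde v \geq u$ in $\Omega$, $\inf_\Omega(\tilde v - u) = 0$, and $\tilde v$ is still a viscosity supersolution of~\eqref{eq-erg-frac} with boundary blow-up, since constant shifts preserve the equation. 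It then suffices to show $\tilde v \equiv u$.

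Setting $\phi := \tilde v - u \geq 0$, the key step is to find an interior point $x_0 \in \Omega$ at which $\phi(x_0) = 0$. If no such point exists, then $\phi > 0$ on $\Omega$ and there is a sequence $x_n \to \partial \Omega$ with $\phi(x_n) \to 0$. To rule out this scenario, I would pick $y_0 \in \Omega$ with $\phi(y_0) = 2\eta > 0$, so that by continuity $\phi \geq \eta$ on some ball around $y_0$. By Theorem~\ref{teo1}, $u \sim C_0 g_\gamma$ near $\partial \Omega$, while the bound $\tilde v \geq u - M$ forces $\liminf_{\partial \Omega} \tilde v/g_\gamma \geq C_0$. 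Combining the supersolution structure of $\tilde v$ with upper barriers analogous to the $W_\epsilon^\pm$ of Lemma~\ref{perron}, the gap $\tilde v \geq u + \eta$ could be propagated from the neighborhood of $y_0$ to a full boundary strip, contradicting $\phi(x_n) \to 0$. This is the hard part: without a priori knowing that $\tilde v$ lies in the $\gamma$-class, a barrier construction tailored to the ergodic setting, in the spirit of Lemmas~\ref{lemabarrera0}--\ref{lemabarrera1}, is likely required.

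Granting $\phi(x_0) = 0$ at some interior $x_0$, the conclusion follows from a nonlocal minimum principle. Since $f$ is locally H\"older continuous, Proposition~\ref{lemaequiLip} yields $u \in C^{2s + \alpha}(\Omega)$ and the equation holds pointwise, $\Ls u(x_0) + |Du(x_0)|^m = f(x_0) - c$. For $r > 0$ small with $\overline{B_r(x_0)} \subset \Omega$, $u$ serves as a $C^2$ test function touching $\tilde v$ from below at $x_0$, so the viscosity supersolution inequality for $\tilde v$ reads
\[
\Ls[B_r(x_0)] u(x_0) + \Ls[B_r(x_0)^c] \tilde v(x_0) + |Du(x_0)|^m \geq f(x_0) - c.
\]
Subtracting the pointwise equation satisfied by $u$ and using $\phi(x_0) = 0$, one obtains
\[
-C_{N,s} \int_{\Omega \setminus B_r(x_0)} \frac{\phi(z)}{|x_0 - z|^{N + 2s}}\,dz \geq 0.
\]
Since $\phi \geq 0$, this integral must vanish, forcing $\phi \equiv 0$ on $\Omega \setminus B_r(x_0)$. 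Letting $r \to 0^+$ gives $\phi \equiv 0$ in $\Omega$, i.e., $u = v + t^\ast$, yielding the claim with $C = t^\ast$.
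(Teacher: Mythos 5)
There is a genuine gap in your proposal, and it is exactly the one you flag yourself: the step where you need an \emph{interior} point $x_0$ with $\phi(x_0) := \tilde v(x_0) - u(x_0) = 0$. Since both $\tilde v$ and $u$ blow up at $\partial\Omega$, the infimum $\inf_\Omega \phi = 0$ may well be approached only along a sequence escaping to the boundary, and your sketched barrier argument to "propagate" the gap $\tilde v \geq u + \eta$ to a full boundary strip is not carried out. Doing so without already knowing that $\tilde v$ lies in the $\gamma$-class (which is precisely what you cannot assume here — compare the hypotheses of Lemma~\ref{lemma2}) would essentially require a strong minimum principle or boundary Harnack-type estimate for the censored operator up to $\partial\Omega$, which the paper does not have and does not need. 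So your nonlocal minimum-principle endgame is fine as a conditional argument, but the hypothesis it is conditioned on remains unproven.

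The paper's proof sidesteps this obstruction by a different device: instead of touching $\tilde v$ from below with $u$ itself, it touches $v + C^*$ from below with the \emph{bounded} approximants $\tilde w_{k,R} = u_{\lambda_k,R} - u_{\lambda_k,R}(x_0)$ coming from the truncated Dirichlet problems~\eqref{RR}. Because $\tilde w_{k,R}$ is bounded and $v$ blows up, the function $v + C^* - \tilde w_{k,R}$ automatically attains its infimum at an interior point $\tilde x_k$, and one arranges $(v + C^* - \tilde w_{k,R})(\tilde x_k) < \theta/2$ while $v + C^* - \tilde w_{k,R} > \theta$ on a fixed set $E \subset\subset \Omega$ of positive measure (using $\inf(v + C^* - u) = 0$, the local uniform convergence $\tilde w_{k,R} \to u$, and the assumed nonempty open set $\{v + C^* > u\}$). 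Plugging the $C^{2s+\alpha}$ function $\tilde w_{k,R}$ as a test function at $\tilde x_k$ and isolating the nonlocal contribution over $E$ then yields a strictly negative term of size $\theta|E|\,\mathrm{diam}(\Omega)^{-(N+2s)}$, which survives the limits $R\to\infty$, $k\to\infty$ and gives the contradiction. This is the same in spirit as your minimum-principle step, but applied to quantities for which the interior attainment is structurally guaranteed rather than something to be proved. As a minor additional point, working with the bounded $\tilde w_{k,R}$ also makes all the nonlocal integrals unambiguously convergent, whereas in your version one has to worry about the integrability of $\phi = \tilde v - u$ against the kernel near $\partial\Omega$, which again is not available without $\gamma$-class information on $\tilde v$.
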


\begin{proof}
By the previous result, there exists $C > 0$ such that $v + C \geq u$ in $\Omega$. Let $C^* = \inf \{ C : v + C \geq u \mbox{ in $\Omega$} \}$. The infimum is attained, we have $v + C^* \geq u$ in $\Omega$ and moreover
\begin{equation}\label{inf=0}
\inf_\Omega \{ v + C^* - u \} = 0.
\end{equation}

We claim that $v + C^* = u$. By contradiction, we assume then the open set 
$$
\OO = \{ x \in \Omega : v(x) + C^* > u(x) \}
$$
 is nonempty. 


Let $\lambda_k, u_k = u_{\lambda_k}$ as in~\eqref{uk}, and with this, let $u_{k, R} = u_{\lambda_k, R}$ as in~\eqref{RR}. Then, by the local, monotone convergence of this family, there exists $\theta > 0$ and $R_k$ such that, for all $R \geq R_k$ we have $\tilde w_{k, R} = u_{\lambda_k, R} - u_{\lambda_k, R}(x_0)$ is such that 
$$
v + C^* - \tilde w_{k, R} > \theta \quad \mbox{in} \ K \cap \OO,
$$
for all $R \geq R_k$, where $K \subset \subset \Omega$ is compact with nonempty interior such that $K \cap \OO$ contains an open set. Since $v$ satisfies~\eqref{blow-up}, we have there exists $\tilde x_k \in \Omega$ such that $\inf_\Omega \{ v + C^* - \tilde w_k \} = (v + C^* - \tilde w_{k, R})(\tilde x_k)$ for all $R \geq R_k$. From~\eqref{inf=0}, taking $k$ and $R_k$ large enough, we may assume that $(v + C^* - \tilde w_{k, R})(\tilde x_k) < \theta/2$.


Then, we can test the equation for $v$ at $\tilde x_k$ with test function $\tilde w_k$, from which we get
\begin{align*}
0 \leq (-\Delta)_\Omega^s[B_\delta(\tilde x_k)] \tilde w_k(\tilde x_k) + (-\Delta)_\Omega^s[B_\delta(\tilde x_k)^c] v(\tilde x_k) + |D \tilde w_k(\tilde x_k)|^m- f(\tilde x_k) + c.
\end{align*}

Denoting $E = K \cap \OO$, we can take $\delta$ small enough in order to $B_\delta(\tilde x_k) \cap (K \cap \OO) = \emptyset$, from which
\begin{align*}
(-\Delta)_\Omega^s[B_\delta(\tilde x_k)^c] v(\tilde x_k) = & -C_{N,s} \int_{E} \frac{v(z) - v(\tilde x_k)}{|\tilde x_k - z|^{N + 2s}}dz -C_{N, s} \int_{\Omega \setminus (E \cup B_\delta(\tilde x))} \frac{v(z) - v(\tilde x_k)}{|\tilde x_k - z|^{N + 2s}}dz \\
\leq & -C_{N,s} \theta \int_{E} \frac{dz}{|\tilde x_k - z|^{N + 2s}} + \Ls [\Omega \setminus B_\delta(\tilde x)] \tilde w_k(\tilde x_k) \\
\leq & -C_{N,s} \theta |E| \mathrm{diam}(\Omega)^{-(N + 2s)} + \Ls [\Omega \setminus B_\delta(\tilde x)] \tilde w_k(\tilde x_k),
\end{align*}
where $|E| > 0$ is the Lebesgue measure of $E$. Replacing this estimate on the viscosity inequality, we get
\begin{align*}
0 \leq & -C_{N,s} \theta |E| \mathrm{diam}(\Omega)^{-(N + 2s)} + \Ls \tilde w_k(\tilde x_k) + |D \tilde w_k(\tilde x_k)|^m- f(\tilde x_k) + c \\
= & -C_{N,s} \theta |E| \mathrm{diam}(\Omega)^{-(N + 2s)} -\lambda_k \tilde w_{\lambda_k, R}(\tilde x_k) - \lambda_k u_{\lambda_k, R}(x_0) + c.
\end{align*}

Now, since $\tilde w_k(\tilde x_k, R) > v(\tilde x_k) + C^* - \theta/2 > -C$ for some $C$ not depending on $k$ nor $R$, we take $R \to \infty$ and then $k \to \infty$ to conclude, by~\eqref{uk}, that
$$
0 \leq -C_{N,s} \theta |E| \mathrm{diam}(\Omega)^{-(N + 2s)}, 
$$
which is a contradiction.
\end{proof}

\begin{lema}\label{lemma2} 
Let $(u, c)$ be the ergodic pair found in Theorem~\ref{teoergodic}, and let $(v,c')$ be an ergodic pair~\eqref{eq-erg-frac}-\eqref{blow-up} such that $v$ is in the $\gamma$-class~\eqref{gamma-class}. Then, $c = c'$.
\end{lema}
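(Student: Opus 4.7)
The easy half, $c\leq c'$, follows directly from Proposition~\ref{char}: since $(v,c')$ solves~\eqref{eq-erg-frac}-\eqref{blow-up}, the function $v$ is in particular a supersolution with $\rho=c'$ satisfying the blow-up condition, hence $c' \geq c$ by the infimum characterization.

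For the reverse inequality $c' \leq c$, my plan is to adapt the convex combination trick used in Lemma~\ref{lema-Veron} to the ergodic setting. For each $\alpha\in(0,1)$, I will set
\[
w_\alpha := (1+\alpha)\,u - \alpha\,v.
\]
Thanks to Proposition~\ref{lemaequiLip} together with the H\"older continuity of $f$ and the $\gamma$-class upper bounds on both $u$ and $v$, both functions are regular enough that their respective ergodic equations hold pointwise. Writing $Du$ as the convex combination $Du=\tfrac{1}{1+\alpha}Dw_\alpha + \tfrac{\alpha}{1+\alpha}Dv$ and using the convexity of $t\mapsto|t|^m$ yields the pointwise bound $(1+\alpha)|Du|^m \leq |Dw_\alpha|^m + \alpha|Dv|^m$. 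Combining this with the linearity of $\Ls$ and the two equations, and using the already established $c\leq c'$, a direct computation gives
\[
\Ls w_\alpha + |Dw_\alpha|^m \;\geq\; f - c + \alpha(c'-c) \;\geq\; f - c \quad \mbox{in } \Omega,
\]
so $w_\alpha$ is a classical (hence viscosity) supersolution of the ergodic equation with constant $c$.

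Next I will verify that $w_\alpha$ satisfies~\eqref{blow-up}. Using the sharp leading profile $u/g_\gamma \to C_0$ from~\eqref{rate-frac} together with the upper $\gamma$-class bound $\limsup_{d(x)\to 0^+} v(x)/g_\gamma(x) \leq c_2$ for some $c_2>0$, I obtain
\[
\liminf_{d(x)\to 0^+}\frac{w_\alpha(x)}{g_\gamma(x)} \;\geq\; (1+\alpha)\,C_0 - \alpha\, c_2,
\]
which is strictly positive for every $\alpha$ sufficiently small (depending only on $C_0$ and $c_2$). For such $\alpha$, $w_\alpha(x)\to+\infty$ as $d(x)\to 0^+$, and~\eqref{blow-up} is satisfied.

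Consequently the pair $(w_\alpha,c)$ fulfils the hypotheses of Lemma~\ref{lemma1}, which produces a constant $C\in\R$ with $u = w_\alpha + C$. Unpacking this equality, $v - u = C/\alpha$ is a constant throughout $\Omega$. Since $\Ls$ and $D$ both annihilate constants, substituting back into the equation for $v$ gives $\Ls v + |Dv|^m = \Ls u + |Du|^m = f-c$, and comparison with $\Ls v + |Dv|^m = f-c'$ forces $c=c'$. The main delicate point in the plan is the verification of the blow-up of $w_\alpha$ on $\partial\Omega$, which hinges on combining the precise asymptotic~\eqref{rate-frac} for $u$ with the upper part of the $\gamma$-class assumption on $v$ and choosing $\alpha$ accordingly; once this is secured the conclusion follows at once from Lemma~\ref{lemma1}.
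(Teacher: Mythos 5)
Your proof is correct, and it takes a genuinely different route from the paper. The paper proves $c'\leq c$ by a direct multiplicative comparison: both $u$ and $v$ are shown to satisfy the sharp rate~\eqref{rate-frac} with the \emph{same} constant $C_0$, so for $\mu<1$ one has $u>\mu v$ near $\partial\Omega$, and testing the supersolution $u$ against $\mu v$ at the interior minimum of $u-\mu v$ gives $0 \leq \mu(f(x_\mu)-c') - f(x_\mu) + c$, i.e. $\mu c' \leq (\mu-1)f(x_\mu) + c$; since $f$ is bounded below this yields $c'\leq c$ as $\mu\to 1^-$. You instead use the convex combination $w_\alpha = (1+\alpha)u - \alpha v$ (the trick from Lemma~\ref{lema-Veron}) to produce a supersolution with constant $c$ that still blows up for small $\alpha$, and then invoke the strong uniqueness statement of Lemma~\ref{lemma1} to get $u = w_\alpha + C$, whence $v - u$ is constant and $c=c'$. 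The paper's argument is self-contained and elementary; yours is more modular but imports the machinery of Lemma~\ref{lemma1}. Both are valid, and indeed your route needs slightly less from $v$: it only requires the upper $\gamma$-class bound on $v$, whereas the paper first upgrades $v$ to the sharp asymptotic~\eqref{rate-frac} before comparing. One small point worth making explicit: Lemma~\ref{lemma1} is applied to a supersolution $(w_\alpha,c)$, which is exactly what it is stated for, so there is no hidden gap there; and the regularity needed to carry out the pointwise convexity computation is indeed supplied by Proposition~\ref{lemaequiLip} under the Hölder assumption on $f$ and the $\gamma$-class bounds on $u,v$.
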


\begin{proof}
We know that $c \leq c'$, and we can assume that $v \geq u$ in $\Omega$ by Proposition~\eqref{propuniqueness}.
Denoting $\tilde f = f - c + \lambda v$, we have $\tilde f$ satisfies~\eqref{compor-f} with the same constant $C_1$, and $v$ solves
\begin{align*}
\lambda v + \Ls v + |Dv|^m = f - c + \lambda v = \tilde f \quad \mbox{in} \ \Omega.
\end{align*}

Then, $v$ is the minimal solution to this problem and therefore it satisfies~\eqref{rate-frac}.
Similar argument can be done on $u$ from which the same boundary blow-up rate holds. Then, for each $\mu \in (0,1)$ we have $u > \mu v$ near the boundary. Morever, there exists $x = x_\mu \in \Omega$ such that $\inf_\Omega \{ u - \mu v \} = u(x) - \mu v(x)$. Then, we have
\begin{align*}
0 \leq & \mu \Ls v(x) + \mu^m |Dv(x)|^m - f(x) + c \\
\leq & \mu(\Ls v(x) + |Dv(x)|^m) - f(x) + c \\
\leq & \mu(f(x) - c') - f(x) + c, 
\end{align*}
from which $\mu c' \leq (\mu - 1)f(x_\mu) + c$. Since $f$ is bounded from below independent of $\mu$, we have $\mu c' \leq C(1 - \mu) + c$ for some $C > 0$ not depending on $\mu$, and taking limit as $\mu \to 1$ we arrive at $c' \leq c$. This concludes the proof.
%
\end{proof}
Finally, we are in a position to
\begin{proof}[Proof Theorem \ref{teo2}] 
Existence of the ergodic pair $(u,c)$ is a consequence of Theorem~\ref{teoergodic}. The characterization of the ergodic constant $c$ as a critical value is given in Proposition~\ref{char}. Minimality of the ergodic solution $u$ is proven in Proposition~\ref{propuniqueness}. Finally, the partial uniqueness results can be found in Lemmas~\ref{lemma1}, and~\ref{lemma2}. The estimate ~\eqref{rate-frac} holds 
since $u$ solves the problem
$$
\lambda u + \Ls u + |Du|^m = f - c + \lambda u \quad \mbox{in} \ \Omega,
$$
from which, we apply Theorem~\ref{teo1} with $C_1$ replaced by $C_1 + C\lambda$ for some $C > 0$ not depending on $\lambda$. Since $C_0$ defined in~\eqref{defc0} is continuous with respect to $C_1$, taking $\lambda \searrow 0$ we conclude the result. The estimates for the derivatives are a consequence of Proposition~\ref{lemaequiLip}.
\end{proof}



\noindent
{\bf Acknowledgements.} Part of this work was developed in the context of research visits at Departamento de Matemáticas of Universidad Técnica Federico Santa María, Valparaiso, Chile; and Instituto de Matemáticas, Universidade Federal do Rio de Janeiro, RJ, Brazil. The authors would like to thank the hospitality of both institutions.

A.Q. was partially supported by Fondecyt Grant 1231585. 
E.T. was partially supported by CNPq Grant 306022/2023-0 and FAPERJ APQ1 Grant 210.573/2024. 
Both authors were supported by CNPq Grant 408169/2023-0.


\end{document}